\documentclass{amsart}
\usepackage[all]{xy}
\usepackage[pagebackref,colorlinks=true,linkcolor=blue,urlcolor=blue]{hyperref}
\usepackage{color}
\usepackage{cleveref}

\newtheorem*{theorem*}{Theorem}
\newtheorem{theorem}{Theorem}[section]
\newtheorem{lemma}[theorem]{Lemma}
\newtheorem{proposition}[theorem]{Proposition}
\newtheorem{corollary}[theorem]{Corollary}

\newtheorem{headthm}{Theorem}

\theoremstyle{definition}
\newtheorem{definition}[theorem]{Definition}

\newtheorem{setup}[theorem]{Setup}
\newtheorem{assumption}[theorem]{Assumption}

\theoremstyle{remark}
\newtheorem{remark}[theorem]{Remark}

\newcommand{\Tor}{\textup{Tor}}
\newcommand{\Proj}{\textup{Proj}}
\newcommand{\depth}{\textup{depth}}
\newcommand{\Tr}{\textup{Tr}}
\newcommand{\Hom}{\textup{Hom}}
\newcommand{\Ext}{\textup{Ext}}

\newcommand{\Ass}{\textup{Ass}}

\numberwithin{equation}{section}

\begin{document}

%%\subjclass[2010]{Primary }

\title{Local cohomology tables of sequentially almost Cohen-Macaulay modules}
\author{Cheng Meng}

\address{Cheng Meng\\ Yau Mathematical Sciences Center, Tsinghua University, Beijing 100084, China. \emph{Email:} {\rm cheng319000@tsinghua.edu.cn}}

\date{\today}
\maketitle

\begin{abstract}
Let $R$ be a polynomial ring over a field. We introduce the concept of sequentially almost Cohen-Macaulay modules and describe the extremal rays of the cone of local cohomology tables of finitely generated graded $R$-modules which are sequentially almost Cohen-Macaulay, and describe some cases when the local cohomology table of a module of dimension 3 has a nontrivial decomposition.
\end{abstract}

\section{introduction}
Let $R = k[x_1,\ldots,x_n]$ be a standard graded polynomial ring over a field $k$. The graded Betti numbers and the local cohomology modules are important homological data of graded modules over $R$. In 2006, Boij and S\"oderberg \cite{boij2008graded} formulated two conjectures on the cone of graded Betti tables of finitely generated Cohen-Macaulay modules, which were proved by David Eisenbud, Gunnar Fl{\o}ystad and Jerzy Weyman in characteristic 0 in \cite{eisenbud2011existence} and by Eisenbud and Schreyer in arbitrary characteristic in \cite{eisenbud2009betti}. These conjectures were also extended to the non-Cohen-Macaulay case by Boij and S\"oderberg in \cite{boij2012betti}.

Denote the Betti table of a finitely generated graded module by $\beta^{\bullet}(\cdot)$, then the above results can be restated in the following way:

\begin{theorem}\label{1.1}
Let $R = k[x_1,\ldots,x_n]$ be a standard graded polynomial ring. The extremal rays of the cone generated by Betti tables of finitely generated graded $R$-modules are given by the modules with a pure resolution, and every Betti table in the cone decomposes in the following way. For every finitely generated graded module $M$ there exist finitely generated modules $N_1,\ldots,N_s$ with pure resolutions and $r_1,\ldots,r_s \in \mathbb{Q}$, $r_1,\ldots,r_s>0$ such that
$$\beta^{\bullet}(M) = \sum_{i = 1}^s r_i\beta^{\bullet}(N_i).$$
\end{theorem}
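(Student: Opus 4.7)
The plan is to follow the Eisenbud-Schreyer strategy, since the theorem packages together an existence statement with a decomposition statement. The first step is to establish the \emph{existence} theorem: for every strictly increasing integer sequence $d_0 < d_1 < \cdots < d_c$ with $c \le n$ there exists a finitely generated graded Cohen-Macaulay $R$-module whose minimal free resolution is pure of type $(d_0,\ldots,d_c)$. In characteristic zero this is the Eisenbud-Fl{\o}ystad-Weyman construction of such a module as an equivariant cokernel indexed by the Young diagram determined by the degree sequence; in arbitrary characteristic one uses the Eisenbud-Schreyer pushforward construction from products of projective spaces. This populates each of the proposed extremal rays with an actual module.

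Next I would set up the combinatorial framework. Strictly increasing degree sequences of length at most $n+1$ carry a partial order by componentwise comparison after extending shorter sequences by $+\infty$ on the right, and the pure diagrams $\pi(d_0,\ldots,d_c)$ indexed by a maximal chain span a simplicial cone; these simplicial cones assemble into a fan. A greedy decomposition algorithm then takes the Betti table $\beta^{\bullet}(M)$, reads off the minimal degree sequence appearing in its ``top strand'', and subtracts the unique positive rational multiple of the corresponding pure diagram needed to cancel the top-left nonzero entry. The resulting virtual table has strictly smaller support in the partial order, so iterating terminates in finitely many steps and presents $\beta^{\bullet}(M)$ as a rational combination of pure diagrams.

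The main obstacle, and the real heart of the proof, is \emph{positivity} of the coefficients produced by the algorithm, equivalently the assertion that the pure diagrams exhaust the extremal rays. I would establish this by producing, for each facet of the putative cone, a linear functional which is nonnegative on every Betti table and vanishes on the facet. Following Eisenbud-Schreyer, such functionals arise from cohomology tables of supernatural coherent sheaves on $\Proj R$: a suitable alternating pairing of the Betti numbers of $M$ against the cohomological data of the chosen sheaf can be reinterpreted, via the Tate resolution and BGG correspondence, as the Euler characteristic of a bounded complex whose cohomology is concentrated in a single degree, which yields the required sign. Combining the algorithm with these facet inequalities simultaneously identifies the extremal rays with the pure diagrams and forces nonnegativity of the coefficients in the decomposition; the extension from the Cohen-Macaulay case to the general case is obtained by observing that the pure diagrams of all codimensions $c \le n$ still span the larger cone.
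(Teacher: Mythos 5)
Theorem \ref{1.1} is quoted in the paper as a known result, with proofs attributed to Eisenbud--Fl{\o}ystad--Weyman, Eisenbud--Schreyer, and Boij--S\"oderberg; the paper itself gives no argument. Your sketch is a correct high-level summary of exactly that body of work (existence of pure resolutions, the greedy decomposition along chains of degree sequences, and positivity via the pairing with cohomology tables of supernatural sheaves), so it matches the approach the paper is implicitly invoking.
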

Eisenbud and Schreyer, in \cite{eisenbud2009betti}, asked for a similar description for the cone of cohomology tables of coherent sheaves, and they proved a result similar to Theorem \ref{1.1} in \cite{eisenbud2010cohomology}:

\begin{theorem}
Let $R = k[x_1,\ldots,x_n]$ be a standard graded polynomial ring, $X=\Proj (R)$. The extremal rays of the cone generated by cohomology tables of coherent sheaves are given by those of supernatural vector bundles, and for every coherent sheaf $\mathcal{F}$  there exist possibly infinitely many supernatural vector bundles $\mathcal{F}_1,\mathcal{F}_2,\ldots$ and $r_1,r_2,\ldots \in \mathbb{Q}$, $r_i>0$ such that
$$H^{\bullet}(X,\mathcal{F}) = \sum_{i \geq 1} r_iH^{\bullet}(X,\mathcal{F}_i).$$
\end{theorem}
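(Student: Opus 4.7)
The plan is to imitate the proof of Theorem \ref{1.1} but in the dual setting of cohomology tables, with the key difference that the decomposition will in general require countably many summands. Let $C$ denote the closed convex cone spanned by cohomology tables $H^{\bullet}(X,\mathcal{F})$ of coherent sheaves on $X = \Proj(R)$, viewed inside the $\mathbb{Q}$-vector space of tables indexed by $(i,j)$ with $0 \leq i \leq n-1$ and $j \in \mathbb{Z}$. First I would recall, or construct, for every strictly decreasing \emph{root sequence} $f = (f_1 > \cdots > f_{n-1})$ of integers a \emph{supernatural vector bundle} $\mathcal{F}_f$, i.e.\ a vector bundle on $X$ whose cohomology vanishes except in the predicted range dictated by $f$, and whose Hilbert polynomial has roots exactly $f_1,\ldots,f_{n-1}$. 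The existence of such bundles is the geometric analogue of the pure-resolution existence theorem and, in characteristic 0, comes from the Eisenbud--Fl{\o}ystad--Weyman construction, while in arbitrary characteristic one uses the Eisenbud--Schreyer construction via exterior algebra / BGG. A direct computation shows that up to positive scalar the cohomology table of $\mathcal{F}_f$ depends only on $f$, giving canonical rays $\mathbb{Q}_{\geq 0} \cdot H^{\bullet}(X,\mathcal{F}_f)$ inside $C$.

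Next I would show that these rays are extremal and that they span $C$. The standard device is to construct, for each root sequence $f$, a $\mathbb{Q}$-linear functional $\beta_f$ on cohomology tables that is nonnegative on $C$ and that vanishes exactly on the facet of $C$ opposite to the ray of $\mathcal{F}_f$; concretely $\beta_f$ is built by pairing a cohomology table $\gamma$ with a Betti table of a module with a pure resolution of a suitable degree sequence, using the Eisenbud--Schreyer bilinear pairing between Betti and cohomology tables (the pairing that comes from Serre duality / the sheaf-Ext spectral sequence, and that takes nonnegative values on actual Betti tables against actual cohomology tables). The positivity of these pairings is the duality heart of the argument.

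With the functionals $\beta_f$ in hand, I would totally order the root sequences by a suitable term order (e.g.\ reverse lexicographic on $(f_1,\ldots,f_{n-1})$) so that the resulting facet structure of $C$ becomes locally simplicial, and then run a greedy algorithm on an arbitrary cohomology table $\gamma = H^{\bullet}(X,\mathcal{F})$: at each step, select the minimal root sequence $f$ compatible with $\gamma$, subtract the largest positive multiple $r \cdot H^{\bullet}(X,\mathcal{F}_f)$ that keeps the residual table in $C$, and iterate on the residual. Each step is forced to annihilate one of the inequalities $\beta_{f'}(\gamma) \geq 0$, so at each stage the "front" of the residual table moves strictly, and the procedure can be shown to converge in the natural topology on tables even though it need not terminate in finitely many steps; this gives the infinite expression in the statement.

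The main obstacle, as in the original Eisenbud--Schreyer paper, is the convergence of this greedy decomposition: unlike the Betti-table case where the relevant degree sequences live in a bounded region and the algorithm terminates, here the root sequences $f$ can drift to $-\infty$ and one must argue absolute convergence of $\sum_i r_i H^{\bullet}(X,\mathcal{F}_i)$ termwise in each bidegree. I would handle this by bounding the multiplicities $r_i$ against fixed linear functionals evaluated on $\gamma$ and against the rank/degree of $\mathcal{F}$, so that only finitely many $r_i$ contribute to any given entry $(i,j)$ of the table; positivity of the pairing with Betti tables is exactly what makes these bounds effective.
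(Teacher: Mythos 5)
This statement is not proved in the paper at all: it is quoted as a background result, attributed to Eisenbud and Schreyer's ``Cohomology of coherent sheaves and series of supernatural bundles,'' and serves only to motivate the local-cohomology analogue that the paper actually studies. So there is no internal proof to compare against; what can be assessed is whether your sketch faithfully reflects the Eisenbud--Schreyer argument.

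At the level of outline it does. The four ingredients you name --- existence of supernatural bundles realizing every root sequence, a nonnegative bilinear pairing between Betti tables and cohomology tables that produces supporting functionals for the cone, a greedy peeling algorithm driven by a total order on root sequences, and a convergence argument to justify the infinite sum --- are exactly the skeleton of the Eisenbud--Schreyer proof. But as written the sketch defers the two genuinely hard steps to hand-waves. First, the pairing is not a soft consequence of ``Serre duality / the sheaf-Ext spectral sequence''; Eisenbud and Schreyer define an explicit functional on tables and prove its nonnegativity on the product of the two cones by a careful spectral-sequence and vanishing analysis, and that positivity is the entire content of the facet description --- asserting that $\beta_f$ ``vanishes exactly on the facet opposite to $\mathcal{F}_f$'' is precisely what needs proof, not something that follows from the construction. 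Second, the convergence claim is the delicate part: your plan to ``bound the multiplicities $r_i$ against fixed linear functionals'' and conclude that only finitely many terms hit each bidegree is the right target, but you give no mechanism for why the greedy choice of root sequence can only touch a given $(i,j)$-entry finitely often as the root sequences drift. In the original paper this is controlled by tracking the ``upper boundary'' of the residual table and showing it moves monotonically in a way that eventually clears any fixed degree; without some such monotonicity statement the iteration could a priori revisit a fixed entry infinitely often and the series would not even be well defined termwise. If you want to turn this sketch into a proof, those are the two places where real arguments must be supplied.
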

Motivated by the above results, Daniel Erman asked whether one could describe the cone of local cohomology tables of finitely generated graded $R$-modules similarly. The theme of this paper is to give answers to this question for various kinds of cones. This problem on the cone of local cohomology tables arises formally as Question 2.1 in Smirnov and De Stefani's paper \cite{de2021decomposition}: can we find a description of the extremal rays of the cone of local cohomology tables, and do the extremal rays generate all tables in this cone? Although the local cohomology modules and the sheaf cohomology are closely related, Eisenbud and Schreyer's result does not directly lead to an immediate answer due to two obstacles. The first obstacle is about finiteness, since in Question 2.1 of \cite{de2021decomposition} we require the linear combination to be a \emph{finite} sum, while Eisenbud and Schreyer's result uses a convergent \emph{infinite} sum. The second obstacle is that the local cohomology tables of modules and cohomology tables of coherent sheaves differ at the beginning. For instance, let $X=\Proj (R)$, $M$ be a finitely generated graded $R$-module, $\mathfrak{m}=(x_1,\ldots,x_n)$ be the graded maximal ideal, $\mathcal{F}=\tilde{M}$ be the corresponding coherent sheaf. Let $\Gamma(M)=\oplus_{t \in \mathbb{Z}}H^{0}(X, \mathcal{F}(t))$ be the \emph{module of global sections of M}. Then $H^i(X,\mathcal{F})=H^{i+1}_{\mathfrak{m}}(M)$ for $i \geq 1$ and there is an exact sequence
\begin{equation}\label{equation-relation of Gamma}
0 \to H^0_{\mathfrak{m}}(M) \to M \to \Gamma(M) \to H^1_{\mathfrak{m}}(M) \to 0.    
\end{equation}
The sheaf $\mathcal{F}$ only determines $\Gamma(M)$, and in general it is hard to find a decomposition of $H^1_{\mathfrak{m}}(M)$ using the data of $\Gamma(M)$.

This problem is still open in general. Up to now, there are two important results of this kind. One result is proved by Smirnov and De Stefani in \cite{de2021decomposition}, where they gave a complete description of the cone of local cohomology tables of modules of dimension at most 2. By denoting the local cohomology table of a graded module by $H^{\bullet}(\cdot)$, Smirnov and De Stefani's results are as follows:

\begin{theorem}[\cite{de2021decomposition}, Theorem 4.6]\label{theorem-S-DS main result}
Let $R = k[x_1,\ldots,x_n]$ and $S = k[x_1,x_2]$ be two standard graded polynomial rings. Assume $n \geq 2$. Let $A$ be the set of $S$-modules $\{k(a),k[x](a), S(a), (x_1,x_2)^t(a), t \in \mathbb{N},a \in \mathbb{Z}\}$. Identify these $S$-modules as $R$-modules via the ring map $S \cong R/(x_3,\ldots,x_n)$. Then for every finitely generated graded $R$-module $M$ of dimension at most $2$, there exist $N_1,\ldots,N_s \in A$, $r_1,\ldots,r_s \in \mathbb{Q}$ and $r_1,\ldots,r_s>0$ such that
$$H^{\bullet}(M)=\sum_{i=1}^s r_iH^{\bullet}(N_i).$$
Moreover, the set $A$ describes the vertex set of the cone of local cohomology tables of finitely generated graded modules of dimension at most $2$.
\end{theorem}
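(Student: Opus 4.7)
The plan is to decompose $M$ along its dimension filtration, handle each graded piece using one of the four families in $A$, and then verify extremality by a shape-of-table argument.

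I begin by setting $L_i\subseteq M$ to be the maximal submodule of dimension at most $i$, so that $L_0=H^0_{\mathfrak{m}}(M)$, $L_1/L_0$ is pure of dimension one (or zero), and $L_2/L_1$ is pure of dimension two (or zero), with $L_2=M$. Chasing the long exact sequences of local cohomology attached to $0\to L_{i-1}\to L_i\to L_i/L_{i-1}\to 0$ and using that each quotient satisfies $H^j_{\mathfrak{m}}=0$ for $j<\dim$ (by purity) and for $j>\dim$ (by dimension), the relevant connecting maps vanish for degree reasons and one obtains the additive splitting $H^{\bullet}(M)=H^{\bullet}(L_0)+H^{\bullet}(L_1/L_0)+H^{\bullet}(L_2/L_1)$. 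This reduces the existence claim to three independent cases.

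The module $L_0$ is of finite length and a direct sum of shifts of $k$, covered by the $k(a)$'s. For the Cohen--Macaulay piece $L_1/L_0$, I would pick a linear non-zero-divisor $y$ and observe that $(L_1/L_0)/y(L_1/L_0)$ has non-negative Hilbert series; this forces $j\mapsto\dim_k H^1_{\mathfrak{m}}(L_1/L_0)_j$ to be a non-increasing, eventually zero function. Every such function is a non-negative integer combination of indicator functions $j\mapsto\mathbf{1}_{\{j<-a\}}$, which are precisely the Hilbert functions of $H^1_{\mathfrak{m}}(k[x](a))$, so the desired decomposition is immediate.

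The heart of the proof, and the main obstacle, is the pure dimension two piece $N:=L_2/L_1$. Via Noether normalization I view $N$ as a finitely generated module over $S=k[y_1,y_2]$ for some linear system of parameters; the $R$- and $S$-local cohomology of $N$ coincide because $\mathfrak{m}_R$ and $(y_1,y_2)$ have the same radical modulo $\ann(N)$. Over the regular ring $S$, the reflexive hull $N^{**}$ is maximal Cohen--Macaulay and hence free, giving a short exact sequence $0\to N\to N^{**}\to C\to 0$ with $N^{**}=\bigoplus_i S(a_i)$ and $C$ of finite length; the long exact sequence then yields $H^1_{\mathfrak{m}}(N)\cong C$ and $H^2_{\mathfrak{m}}(N)=\bigoplus_i H^2_{\mathfrak{m}}(S(a_i))$. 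The free summands are absorbed directly by the $S(a_i)$'s. To absorb $C$ on the $H^1$-row while keeping the $H^2$-row within the positive cone generated by the $H^{\bullet}(S(b))$'s, I would use the modules $(x_1,x_2)^t(a)$: each has $H^1$-table the staircase $S/(x_1,x_2)^t(a)$ and $H^2$-table $H^{\bullet}(S(a))$, so positive combinations of these match the finite length module $C$ on the $H^1$-row at the cost of an extra contribution on the $H^2$-row. The critical step---the most delicate part of the argument---is to choose the shifts $a$ and exponents $t$ so that after this correction the residual $H^2$-table remains non-negative in every degree and lies in the positive cone of $\{H^{\bullet}(S(b))\}_b$ (equivalently, is a non-negative, convex, eventually linear function with bounded-above support); I expect this to reduce to a combinatorial inequality relating the Hilbert function of $C$ to that of $H^2_{\mathfrak{m}}(N^{**})$, which in turn follows by comparing degrees in the defining exact sequence for $C$ and exploiting the flexibility of taking $t$ large.

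Finally, to identify $A$ with the vertex set I would compare the shapes of the four families of tables---a single entry for $k(a)$, a monotone step on the first row for $k[x](a)$, a linear ramp on the second row for $S(a)$, and a length-$t$ staircase on the first row paired with the same ramp on the second for $(x_1,x_2)^t(a)$---and observe that these shapes are sufficiently distinct that no member of $A$ can be written as a non-trivial positive combination of the others, so each spans an extremal ray of the cone.
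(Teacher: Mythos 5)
This result is cited from Smirnov--De~Stefani \cite{de2021decomposition} and the paper does not supply a proof of its own, so there is nothing in the paper to compare your attempt against directly; I am assessing the proposal on its merits.

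Your framework---split along the dimension filtration, absorb the zero- and one-dimensional pieces with $k(a)$ and $k[x](a)$, then handle the torsion-free dimension-two piece via the reflexive hull over a two-dimensional polynomial ring---is the right shape, but it has a conceptual slip and a substantive gap. The slip: your justification for the vanishing of the connecting maps invokes ``$H^j_{\mathfrak{m}}=0$ for $j<\dim$ by purity'' for every dimension factor, but the top factor $L_2/L_1$ is merely depth $\geq 1$; it can perfectly well have $H^1_{\mathfrak{m}}\neq 0$ (indeed if it never did, the whole theorem would be the trivial Cohen--Macaulay case). The connecting maps do vanish, but for the cross-over reason used throughout the paper (e.g.\ Proposition~\ref{3.6}): the \emph{target} $H^{i+1}_{\mathfrak{m}}(L_1/L_0)$ dies for $i\geq 1$ because $\dim(L_1/L_0)\leq 1$, and the \emph{source} dies only for $i=0$. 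Stating the wrong reason suggests you haven't noticed that the top piece need not be Cohen--Macaulay, which is exactly why $(x_1,x_2)^t(a)$ must appear in $A$ at all.

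The substantive gap is the one you flag yourself: after using the sequence $0\to N\to N^{**}\to C\to 0$ to identify $h^1(N)=HS(C)$ and $h^2(N)=h^2(N^{**})$, you must write $(0,HS(C),h^2(N),0)$ as a nonnegative combination of the tables of $S(a)$ and $(x_1,x_2)^t(a)$. Each use of $(x_1,x_2)^t(a)$ contributes both a staircase to the $h^1$-row \emph{and} a copy of $h^2(S(a))$ to the $h^2$-row, so matching $HS(C)$ and keeping the residual $h^2$-row inside the cone of $\{h^2(S(b))\}$ are coupled constraints; one cannot choose the staircase decomposition of $HS(C)$ freely. You say you ``expect this to reduce to a combinatorial inequality relating the Hilbert function of $C$ to that of $H^2_{\mathfrak{m}}(N^{**})$,'' but you neither state nor prove that inequality, and it is precisely the heart of the theorem. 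Without it, the existence claim is not established. (Two smaller points: the passage to $S=k[y_1,y_2]$ by Noether normalization needs linear homogeneous parameters, hence an infinite base field or the base-change argument in Lemma~\ref{3.7}, which you omit; and the final paragraph asserting extremality by ``shapes are sufficiently distinct'' is not an argument---one must actually rule out that a given table in $A$ is a positive combination of the others, which requires inspecting, for instance, the order and leading behaviour of poles at $t=1$ in both the $h^1$- and $h^2$-rows, in the spirit of the analysis in Proposition~\ref{4.15}.)
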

Later, Caviglia and De Stefani proved a similar result in \cite{CAVIGLIA2021106635}; they define a concept called $E$-depth which relies on the depths of certain Ext modules. They gave a description of the vertex set of the cone of local cohomology tables of modules with $E$-depth at least $n-2$, and proved that these vertices generate every table in this cone. Their result reduces to Theorem \ref{theorem-S-DS main result} when $n=2$.

In this paper we will analyze this problem in a different setting from the one considered in \cite{CAVIGLIA2021106635} and \cite{de2021decomposition}. First, we consider a special class of modules called sequentially almost Cohen-Macaulay(saCM) modules (see Definition \ref{definition-sacm}), and discuss how the local cohomology tables of such modules decompose. The following theorem is the first main result of this article:
\begin{headthm}[Theorem \ref{4.19}]\label{theorem A}Let $R = k[x_1,\ldots,x_n]$ be a standard graded polynomial ring. Let $C$ be the cone generated by local cohomology tables of saCM modules. The vertex set of this cone is given by the set $A_M$ (see Definition \ref{definition-AM,A'M}); there exists a set $A'_M$ (see Definition \ref{definition-AM,A'M}) properly containing $A_M$ such that for every local cohomology table $H^{\bullet}(M)$ in the cone, there is a finite set of modules $N_1,\ldots,N_s \in A'_M$ and $r_1,\ldots,r_s \in \mathbb{Q}$ and $r_1,\ldots,r_s>0$ such that
$$H^{\bullet}(M)=\sum_{i=1}^s r_iH^{\bullet}(N_i).$$
However, when $n \geq 3$, then there is an saCM module $M_0$ such that for any finite set of saCM modules $N_1,\ldots,N_s \in A_M$ and $r_1,\ldots,r_s \in \mathbb{Q}$ and $r_1,\ldots,r_s>0$,
$$H^{\bullet}(M) \neq \sum_{i=1}^s r_iH^{\bullet}(N_i).$$
That is, $A_M$ does not generate the cone.
\end{headthm}
The descriptions of the sets $A_M,A'_M$ in Definition \ref{definition-AM,A'M} rely on the concept of the minimal Auslander transpose $\Tr(\bullet)$, see Definition \ref{Sec4-auslander transpose}. The main idea is that under proper assumptions on $M$ (see Assumption \ref{assumptions-Mpd1,NtrM}), there is a $\mathbb{Q}$-linear transformation that maps $\beta^{\bullet}(M)$ to $H^{\bullet}(\Tr(M))$. The existence of such a transformation allows us to use the Boij-S\"oderberg theory of Betti tables to decompose a local cohomology table. The vertices of the cone of Betti tables are of the form $\beta^{\bullet}(N)$ where $N$ has a pure resolution and every such table can be computed using the degree sequence of $N$, so we only need to compute $H^{\bullet}(M)$ when $N=\Tr(M)$ has a pure resolution, and determine when such a local cohomology table decomposes.

Theorem \ref{theorem A} only talks about the decomposition of local cohomology table of saCM modules. There is another important result on the cone of local cohomology tables of all finitely generated graded modules:
\begin{headthm}[Theorem \ref{theorem-all local cohomology tables}]
Let $R$ be a polynomial ring of dimension $n \geq 3$. Let $C_H$ be the cone of local cohomology tables of all finitely generated graded $R$-modules. Then $C_H$ is not generated by its vertices.    
\end{headthm}
This answers the second part of Smirnov and De Stefani's question in negative for $n \geq 3$, thus shows that the largest dimension of $R$ where the second part holds true is exactly 2. This also reflects the difference between the results in this paper and the results in \cite{de2021decomposition} or \cite{CAVIGLIA2021106635} where the cones are generated by their vertices.

Then, we study the decomposition of local cohomology tables of modules of dimension 3. In this case we may also assume that $\dim(R)=3$ by Lemma \ref{3.7}, and we study when the local cohomology $H^\bullet(M)$ is decomposable, which indicates the necessary conditions for the extremal rays of the cone of local cohomology tables. We first reduce to the case where depth$(M)=1$ and $M$ has no dimension 1 submodule. Then we relate such $M$ to the module of global sections $\Gamma(M)$; in this case $\Gamma(M)$ is finitely generated with depth$(\Gamma(M)) \geq 2$. This means $\Gamma(M) $ is Cohen-Macaulay or almost Cohen-Macaulay, so it is saCM and its cohomology table decomposes according to Theorem \ref{4.19}. The key point is whether a decomposition of $H^{\bullet}(\Gamma(M))$ induces a decomposition of $H^{\bullet}(M)$, and it does in two cases, described by the following two theorems. In the first case, there is a submodule of dimension 2 of $M$ that induces a decomposition:
\begin{headthm}[Theorem \ref{6.8}]
Let $M$ be a module of depth $1$ and assume $M$ has no dimension $1$ submodule. Let $\Gamma=\Gamma(M)$ (see Section \ref{section-Gamma functor}), $Q=\Gamma/\Tor(\Gamma)$. Then 
$$H^{\bullet}(M) = H^{\bullet}(\Tor(M))+H^{\bullet}(M/\Tor(M))-(0,HS(H^1_{\mathfrak{m}}(Q)),HS(H^1_{\mathfrak{m}}(Q)),0).$$ 
In particular, if $H^1_{\mathfrak{m}}(Q)=0$, then 
$$H^{\bullet}(M) = H^{\bullet}(\Tor(M))+H^{\bullet}(M/\Tor(M)).$$
\end{headthm}
Here $HS(\bullet)$ refers to the Hilbert series, and a description of the module $H^1_{\mathfrak{m}}(Q)$ is given in Proposition \ref{6.10}. Note that here we view the vector with power series entries $(0,HS(H^1_{\mathfrak{m}}(Q)),HS(H^1_{\mathfrak{m}}(Q)),0)$ as a table; this is called a table in series form, which will be explained in Section \ref{section-notations}.

In the second case there is a submodule of dimension 3 of $M$ that induces a decomposition:
\begin{headthm}[Theorem \ref{6.14}]
Let $M$ be a module of depth $1$ and assume $M$ has no dimension $1$ submodule. Let $\Gamma=\Gamma(M)$ (see Section \ref{section-Gamma functor}). Suppose $\Ext^2_R(\Tr(\Gamma),R)=0$ and $\Gamma^*=\Hom_R(\Gamma,R) \neq \Tr(L')$ for any module $L'$ of finite length. Then $H^{\bullet}(M) = H^{\bullet}(M \cap F)+H^{\bullet}(M/M \cap F)$ for some free module $F \subset \Gamma$.
\end{headthm}

\subsection*{Outline}
The structure of the paper is as follows.  In Section \ref{section-notations}, we define a table in series form, review some basic definitions on convex cones in a $\mathbb{Q}$-vector space, and some other basic propositions. Section \ref{section-dimension filtration} covers the concept of the dimension filtration introduced by Schenzel, and shows that to decompose the local cohomology table of an saCM module, it suffices to decompose the tables of modules of projective dimension at most 1; Section \ref{section-pd<=1case} shows how to decompose these tables. In Section \ref{section-Gamma functor}, we introduce some properties of the $\Gamma$-functor and prove that in order to decompose $H^{\bullet}(M)$, it suffices to decompose $H^{\bullet}(\Gamma(M))$ and $HS(\Gamma(M)/M)$ simultaneously. Note that if the module has dimension 3, then we reduce to the case where the ring has dimension 3; in this case the projective dimension of $\Gamma(M)$ is at most 1. Finally in Section \ref{section-decomposition dim3}, we find conditions under which we can decompose $H^{\bullet}(\Gamma(M))$ and $HS(\Gamma(M)/M)$ simultaneously.
\section{notations}\label{section-notations}
Throughout the paper, we work in the following setup: 
\begin{setup}
Let $k$ be a field, $R=k[x_1,\ldots,x_n]$ be a standard graded polynomial ring over a field $k$, $\mathfrak{m}=(x_1,\ldots,x_n)$ be its graded maximal ideal. We assume $M$ is a finitely generated graded $R$-module.    
\end{setup}
\subsection{Notations on the tables}
Conventionally, the \emph{Betti table} of $M$ is the $(n+1) \times \mathbb{Z}$-table $\beta^{\bullet}(M)$ with entries $\beta^{\bullet}(M)_{i,j} = \dim_k\Tor^R_i(M,k)_{i+j}$. The \emph{local cohomology table} $H^{\bullet}(M)$, by definition, is the $(n+1) \times \mathbb{Z}$-table defined by $H^{\bullet}(M)_{i,j} = \dim_kH^i_{\mathfrak{m}}(M)_j$. We define the \emph{Ext-table} $E^{\bullet}(M)$ to be the $(n+1) \times \mathbb{Z}$-table where $E^{\bullet}(M)_{i,j} = \dim_k\Ext^i_R(M,R)_j$. 

\begin{table}[h!]
 \centering
\begin{tabular}{|c||c|c|c|c|}
\hline
   & 0 & 1 & 2 & 3\\
\hline\hline
 0 & 1 & 0 & 0 & 0\\
 \hline
 1 & 0 & 3 & 3 & 1\\
 \hline
 2 & 0 & 1 & 1 & 0\\
\hline
\end{tabular}
\caption{The Betti table of $R/(x_1^2,x_1x_2,x_1x_3,x_2^3)$}
\label{Table 1}
\end{table}

Table \ref{Table 1} corresponds to the minimal free resolution of the module $R/(x_1^2,x_1x_2,x_1x_3,x_2^3)$ over $R$ when $n \geq 3$, which is
$$0 \to R(-4) \to R(-4)\oplus R(-3)^{3} \to R(-3)\oplus R(-2)^{3} \to R.$$
Throughout this paper we use a different kind of notations, and the tables considered will be in \emph{series form}. To be precise, we make the following definition:
\begin{definition}
\phantom{break}
\begin{enumerate}
\item We say the space of Betti tables or Ext-tables is $V=\oplus^n_{i=0}\mathbb{Q}[[t]][t^{-1}]v_i$. It is a free $\mathbb{Q}[[t]][t^{-1}]$-module of rank $n+1$, and it is also a $\mathbb{Q}$-vector space.
\item The space of local cohomology tables is $V^*=\oplus^n_{i=0}\mathbb{Q}[[t^{-1}]][t]v^*_i$.
\item The unshifted Betti table of $M$ in series form is an element $\beta^\bullet(M)=(\beta_0(M),\beta_1(M),\ldots,\beta_n(M)) \in V$ defined by $\beta_i(M)=\Sigma_{j \in \mathbb{Z}}\beta_{i,j}(M)t^j$.
\item The Ext-table in series form is an element $E^\bullet(M)=(E^0(M),E^1(M)$, \ldots,$E^n(M)) \in V$ where $E^i(M)=\Sigma_{j \in \mathbb{Z}}$dim$_k\Ext^i_R(M,R)_jt^j$.
\item The local cohomology table of $M$ is $H^\bullet(M)=(h^0(M),h^1(M),\ldots,h^n(M))$ $\in V^*$ where $h^i(M)=\Sigma_{j \in \mathbb{Z}}$dim$_kH^i_{\mathfrak{m}}(M)_jt^j$.
\end{enumerate}    
\end{definition}
These two representations of a table are equivalent. For example, Table \ref{Table 1} will become $(1,3t^2+t^3,3t^3+t^4,t^4)$ in series form.
\begin{remark}
In all of the following sections, we will stick to the \emph{unshifted} Betti table instead of the usual Betti table, and we will express all the tables in series form. The series form has two advantages: first, the entries of these tables are series, and they interact with the Hilbert series of graded modules; and second, the action of taking the difference of a table becomes multiplication by $(1-t)$, which makes sense as $V,V^*$ are $\mathbb{Q}[t]$-modules.   
\end{remark}
For a graded module $M$, let $HS(M)=\sum_{i \in \mathbb{Z}}\dim_k(M_i)t^i$ denote its Hilbert series. One interaction of the tables with the Hilbert series is reflected in the following proposition.
\begin{proposition}
Let $\beta^\bullet(M)=(\beta_0(M),\ldots,\beta_n(M))$ be the Betti table of $M$, then
$$HS(M)=\sum_{0 \leq i \leq n}(-1)^i\beta_i(M)HS(R)=(1-t)^{-n}\sum_{0 \leq i \leq n}(-1)^i\beta_i(M).$$
\end{proposition}
\begin{proof}
For fixed $i$, $\beta_i(M)HS(R)=HS(F_i)$ where $F_i$ is the $i$-th free module in the free resolution of $i$. Since $M$ is the homology of the free resolution and taking homology does not change the alternating sum of Hilbert series, we are done.    
\end{proof}

\subsection{Convex cones in a $\mathbb{Q}$-vector space}
We want to consider \emph{convex cones} $C$ in the vector space $V$ or $V^*$, that is, subsets that are closed under multiplication by positive rational numbers and addition. We call the expression $\sum_{1 \leq i \leq s}a_ic_i$ with $c_i \in C, a_i \in \mathbb{Q}, a_i > 0$ a \emph{positive linear combination} of $c_1,c_2,\ldots,c_s$. If $c \in C$ is a positive linear combination of $c_1,c_2,\ldots,c_s$, we also say that $c$ \emph{decomposes into $c_1,c_2,\ldots,c_s$}; we say the decomposition is trivial if $s=1$ and in this case $c$ and $c_1$ differ by a positive rational scalar. A \emph{generating set} of the cone is a subset $G$ of the cone $C$ such that every element is a positive linear combination of elements in $G$. We also say $G$ generates the cone $C$ if $G$ is a generating set. A \emph{vertex} is an element that does not decompose nontrivially and the \emph{vertex set} is the set of all vertices. We say that a ray inside the cone is \emph{extremal} if it contains a vertex; in this case every element of the ray is a vertex except for the origin. In this paper, we will consider the vertex sets of 3 kinds of cones: the cones generated by the Betti tables, the Ext-tables, and the local cohomology tables. It is easy to see that if $G$ is a generating set and $v$ is a vertex, then $v$ must decompose trivially, which means that a positive multiple of $v$ is in $G$. So to find the vertex set, we may find a generating set $G$ first and then find elements in $G$ that decompose trivially. The following lemma about cones is trivial but will be useful in Section \ref{section-pd<=1case}.
\begin{lemma}\label{2.1}Let $L:W_1 \to W_2$ be a linear map between vector spaces over $\mathbb{Q}$. Suppose $C \subset W_1$ is a cone with vertex set $V_1$ and a generating set $G_1$. Then $L(C)$ is a cone in $W_2$ generated by $L(G_1)$; suppose the vertex set of this cone is $V_2$, then $V_2 \subset L(V_1)$. Furthermore, $V_2$ is also the subset of $L(V_1)$ which is not a positive linear combination of the other elements in $L(G_1)$. If moreover, $L$ is an injection, then $V_2 = L(V_1)$.
\end{lemma}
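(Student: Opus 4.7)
My plan is to handle each clause of the lemma in turn. First, the assertion that $L(C)$ is a cone generated by $L(G_1)$ is immediate from $\mathbb{Q}$-linearity: $L$ sends $\sum a_i g_i$ to $\sum a_i L(g_i)$, which shows that $L(C)$ is closed under positive linear combinations and that $L(G_1)$ generates it.

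For the containment $V_2 \subset L(V_1)$, I would fix a vertex $v \in V_2$ and expand $v = \sum_{i=1}^s a_i L(g_i)$ with $g_i \in G_1$, $a_i > 0$, using that $L(G_1)$ generates $L(C)$. Extremality of $v$ forces every summand onto the ray through $v$, so $L(g_i) = \lambda_i v$ with $\lambda_i \geq 0$ and some $\lambda_j > 0$; in particular $v = L(g_j/\lambda_j)$. The remaining task is to upgrade this preimage to an actual vertex of $C$. To this end I would look at the preimage subcone $D := L^{-1}(\mathbb{Q}_{\geq 0} v) \cap C$ and show that every vertex of $D$ is automatically a vertex of $C$: if $d \in D$ is extremal in $D$ and $d = c_1 + c_2$ in $C$, then $L(c_1) + L(c_2) \in \mathbb{Q}_{\geq 0} v$, extremality of $v$ in $L(C)$ forces each $L(c_i) \in \mathbb{Q}_{\geq 0} v$, so $c_i \in D$, and extremality of $d$ in $D$ places $c_i$ on the ray of $d$. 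Any vertex of $D$ mapping into $\mathbb{Q}_{>0} v$ then furnishes the desired element of $V_1$.

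The characterization in the third assertion drops out by combining $V_2 \subset L(V_1)$ with the tautology that an extremal ray cannot be written as a positive combination of rays different from itself, whereas any element of $L(V_1)$ lacking such an expression is extremal because every decomposition in $L(C)$ re-expands through $L(G_1)$ and, in a pointed cone, the off-ray contributions must cancel out. For the injective case, the reverse containment $L(V_1) \subset V_2$ follows directly: given $v' \in V_1$, a decomposition $L(v') = w_1 + w_2$ in $L(C)$ lifts through the bijection $L|_C \colon C \to L(C)$ to $v' = c_1 + c_2$ in $C$, and extremality of $v'$ in $C$ forces $c_i \in \mathbb{Q}_{\geq 0} v'$, hence $w_i \in \mathbb{Q}_{\geq 0} L(v')$.

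The main obstacle I anticipate is the existence of a vertex of the fiber cone $D$ in the second step: a general infinite-dimensional convex cone need not have any extremal ray. For the concrete cones of Betti, Ext, and local cohomology tables considered in later sections, however, restricting to tables of bounded support makes $D$ essentially finite-dimensional, so a direct polyhedral argument (or a straightforward Zorn-style refinement of any preimage $g_j/\lambda_j$) supplies the needed vertex, and the lemma holds in the form used in applications.
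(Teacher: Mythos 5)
Your first, third, and fourth clauses are handled reasonably, but the containment $V_2 \subset L(V_1)$ is where the real trouble lies, and you are right to flag it: you need a vertex of $C$ lying over $v$, and the fiber cone $D = L^{-1}(\mathbb{Q}_{\geq 0}v) \cap C$ need not contain one. In fact the claim is false without an extra hypothesis. Take $C = \{(x,y) \in \mathbb{Q}^2 : x > 0,\ y > 0\}$ and $L(x,y) = x$. Then $C$ is a cone with $V_1 = \emptyset$ (any $(a,b) \in C$ decomposes as $(a/2, b/3) + (a/2, 2b/3)$, off its own ray), while $L(C) = \mathbb{Q}_{>0}$ has every element as a vertex, so $V_2 \not\subset L(V_1)$. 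Since this example is two-dimensional, your proposed repair via ``essentially finite-dimensional'' does not by itself close the gap: one would need closedness and finite generation (polyhedrality), which is not among the hypotheses, and a Zorn-style construction produces maximal elements of $D$, not extremal rays.

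The paper supplies no proof --- it calls the lemma trivial --- but in every place it is invoked (Corollary \ref{4.7}, Corollary \ref{4.12}) the source cone is generated by its vertex set, because Boij--S\"oderberg theory says the Betti tables of pure modules both generate the cone and span its extremal rays. With the standing hypothesis that $V_1$ is itself a generating set, the argument collapses to two lines and avoids the fiber cone entirely: write $v = \sum a_i L(g_i)$ with $g_i \in V_1$ and $a_i > 0$; extremality of $v$ in $L(C)$ forces every $L(g_i) \in \mathbb{Q}_{\geq 0}v$; hence $v = L(\lambda g_j)$ for some index $j$ and some $\lambda > 0$, and $\lambda g_j \in V_1$ because vertex sets are closed under positive scaling. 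Stating this hypothesis up front is the clean fix; your write-up correctly surfaces that something is missing, but the repairs you sketch are heavier than needed and still insufficient as written.
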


\subsection{Some Lemmas on tables in series form}
Let $E=E_R(k)$ be the graded injective hull of $k=R/\mathfrak{m}$. Recall that by local duality, 
$$H^i_{\mathfrak{m}}(M) = \textup{Hom}_R(\Ext^{n-i}_R(M,R(-n)),E).$$ 
This implies dim$_k(H^i_{\mathfrak{m}}(M)_j)=$
dim$_k(\Ext^{n-i}_R(M,R)_{-n-j})$, hence we have:
\begin{proposition}The $\mathbb{Q}$-linear map $L_0:V \to V^*$, where
$$L_0(f_0(t),f_1(t),\ldots,f_n(t))=t^{-n}(f_n(t^{-1}),f_{n-1}(t^{-1}),\ldots,f_0(t^{-1})),$$
is invertible and $H^{\bullet}(M)=L_0(E^{\bullet}(M))$.
\end{proposition}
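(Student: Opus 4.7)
The plan is to verify three things in order: that $L_0$ is a well-defined $\mathbb{Q}$-linear map from $V$ to $V^*$, that it admits an explicit inverse, and finally that it carries $E^{\bullet}(M)$ to $H^{\bullet}(M)$ via the local duality identity recalled just before the statement.

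First I would check well-definedness and linearity. If $f_i(t) \in \mathbb{Q}[[t]][t^{-1}]$, then $f_i(t^{-1}) \in \mathbb{Q}[[t^{-1}]][t]$, so each coordinate of $L_0(f_0, \ldots, f_n)$ lies in $\mathbb{Q}[[t^{-1}]][t]$ after multiplication by $t^{-n}$. Linearity is immediate from the coordinatewise definition. For invertibility, I would define $L_0^{-1} : V^* \to V$ by the same recipe, namely
$$L_0^{-1}(g_0(t), g_1(t), \ldots, g_n(t)) = t^{-n}(g_n(t^{-1}), g_{n-1}(t^{-1}), \ldots, g_0(t^{-1})),$$
and check directly that $L_0 \circ L_0^{-1}$ and $L_0^{-1} \circ L_0$ are both the identity. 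This amounts to observing that reversing the tuple twice returns the original, and that applying the substitution $t \mapsto t^{-1}$ twice also returns the original; the two factors of $t^{-n}$ combine into $t^{-n} \cdot (t^{-1})^{-n} = 1$.

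Next I would verify the identity $H^{\bullet}(M) = L_0(E^{\bullet}(M))$. Unwinding the definition, the $i$-th coordinate of $L_0(E^{\bullet}(M))$ is
$$t^{-n} E^{n-i}(M)(t^{-1}) = t^{-n}\sum_{j \in \mathbb{Z}} \dim_k \Ext^{n-i}_R(M,R)_j \, t^{-j} = \sum_{j' \in \mathbb{Z}} \dim_k \Ext^{n-i}_R(M,R)_{-n-j'} \, t^{j'},$$
after the change of variable $j' = -n-j$. By the local duality identity displayed before the statement, $\dim_k \Ext^{n-i}_R(M,R)_{-n-j'} = \dim_k H^i_{\mathfrak{m}}(M)_{j'}$, so the $i$-th coordinate equals $h^i(M)$, as desired.

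None of these steps presents a real obstacle; the only thing that requires care is keeping track of the two independent reversals at work, namely the coordinate reversal $i \mapsto n-i$ and the exponent reversal $j \mapsto -n-j$. Both are forced by the shape of local duality, and the factor $t^{-n}$ is exactly what encodes the twist $R(-n)$. The proof is essentially bookkeeping once one commits to the series-form conventions set up in this section.
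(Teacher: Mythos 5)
Your proposal is correct and matches the paper's approach: the paper gives no separate proof, deriving the proposition directly from the graded local duality identity $\dim_k H^i_{\mathfrak{m}}(M)_j = \dim_k \Ext^{n-i}_R(M,R)_{-n-j}$ stated just before it, and your argument simply makes the index bookkeeping and the self-inverse check explicit. The change of variable $j' = -n-j$, the coordinate reversal $i \mapsto n-i$, and the cancellation $t^{-n}\cdot(t^{-1})^{-n}=1$ are all handled correctly.
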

By the above proposition, the extremal rays of the cone of local cohomology tables and the cone of Ext-tables are in 1-1 correspondence under $L_0$. So to find the extremal rays of the cone generated by all local cohomology tables, it suffices to find those of all Ext-tables.

It is well known that the Betti numbers are nonzero for finitely many entries, and the dimension of the $i$-th Ext module is at most $n-i$. So actually these tables sit in some proper subspaces of $V$ or $V^*$ respectively.
\begin{proposition}The following proposition holds.
\item (1) $\beta^{\bullet}(M) \in \oplus^n_{i=0}\mathbb{Q}[t][t^{-1}]v_i$.

\item (2) $E^{\bullet}(M) \in \oplus^n_{i=0}\mathbb{Q}[t][t^{-1}]\frac{1}{(1-t)^{n-i}}v_i$.

\item (3) $H^{\bullet}(M) \in \oplus^n_{i=0}\mathbb{Q}[t^{-1}][t]\frac{1}{(1-t^{-1})^i}v_i$.
\end{proposition}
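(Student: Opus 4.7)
The approach is to observe that each of the three claims restricts a Hilbert-type series to a proper subspace, and all three reduce to combining one finiteness input, one dimension bound, and the Matlis duality already set up above.

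For (1), I would invoke the Hilbert syzygy theorem: since $R$ is a polynomial ring in $n$ variables, $M$ has projective dimension at most $n$, so $\Tor^R_i(M,k)=0$ for $i>n$, and each surviving $\Tor^R_i(M,k)$ is a finite-dimensional graded $k$-vector space concentrated in finitely many degrees. Hence $\beta_i(M)$ is a Laurent polynomial in $t$ for each $i$, and the table is supported in $0\leq i\leq n$, giving (1) immediately.

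For (2), the heart of the matter is the estimate $\dim_R \Ext^i_R(M,R)\leq n-i$. I would prove this by localization: if $\mathfrak{p}\in\textup{Supp}\,\Ext^i_R(M,R)$, then $\Ext^i_{R_{\mathfrak{p}}}(M_{\mathfrak{p}},R_{\mathfrak{p}})\neq 0$, so $M_{\mathfrak{p}}$ has projective dimension at least $i$ over the regular local ring $R_{\mathfrak{p}}$; by the Auslander--Buchsbaum formula this forces $\depth R_{\mathfrak{p}}\geq i$, hence $\textup{ht}\,\mathfrak{p}\geq i$ and $\dim(R/\mathfrak{p})\leq n-i$. Taking the maximum over primes in the support yields the bound. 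Combined with the standard fact that any finitely generated graded $R$-module of Krull dimension at most $d$ has Hilbert series of the form $q(t)/(1-t)^d$ with $q(t)\in\mathbb{Z}[t,t^{-1}]$, this gives $E^i(M)\in\mathbb{Q}[t,t^{-1}]\cdot(1-t)^{-(n-i)}$, as required.

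For (3), I would simply transport (2) through the duality map $L_0$ of the preceding proposition. Writing $E^i(M)=q_i(t)/(1-t)^{n-i}$ with $q_i(t)\in\mathbb{Z}[t,t^{-1}]$, one computes $h^{n-i}(M)=t^{-n}q_i(t^{-1})/(1-t^{-1})^{n-i}$; since $t^{-n}q_i(t^{-1})$ remains a Laurent polynomial, reindexing by $j=n-i$ delivers (3). The only step that is not pure bookkeeping is the Auslander--Buchsbaum reduction used in (2), and that is where I expect any real difficulty to lie.
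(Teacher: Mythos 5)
Your proof is correct and follows exactly the route the paper takes implicitly: the paper states the proposition without proof, remarking only that the Betti numbers are finitely supported, that $\dim\Ext^i_R(M,R)\leq n-i$, and that $H^{\bullet}(M)=L_0(E^{\bullet}(M))$, and these are precisely the three facts you establish (the Ext dimension bound via localization and Auslander--Buchsbaum being the only point requiring real argument). Your proposal thus supplies the omitted details of the paper's intended argument rather than deviating from it.
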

Suppose we have a short exact sequence of finitely generated graded $R$-modules $0 \to M' \to M \to M'' \to 0$. Then we have a long exact sequence of local cohomology modules. We see from the long exact sequence that $H^{\bullet}(M)=H^{\bullet}(M')+H^{\bullet}(M'')$ if and only if all the connecting maps $H^{i}_{\mathfrak{m}}(M'') \to H^{i+1}_{\mathfrak{m}}(M')$ are 0.
\begin{definition}\label{definition-exact sequence induce decomp}
We say that the exact sequence $0 \to M' \to M \to M'' \to 0$ induces a decomposition of local cohomology tables, if $H^{\bullet}(M)=H^{\bullet}(M')+H^{\bullet}(M'')$ holds.   
\end{definition}
Finally, the depths of these modules are related by the well-known depth lemma:
\begin{proposition}[Depth lemma]Let $0 \to M' \to M \to M'' \to 0$ be an exact sequence of finitely generated $R$-modules, then:
\item (1) $\depth(M) \geq \textup{min}\{\depth(M'), \depth(M'') \}$.

\item (2) $\depth(M') \geq \textup{min}\{\depth(M), \depth(M'')+1 \}$.

\item (3) $\depth(M'') \geq \textup{min}\{\depth(M')-1, \depth(M) \}$.
\end{proposition}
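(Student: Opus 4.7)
The plan is to deduce all three inequalities directly from the long exact sequence of local cohomology attached to $0 \to M' \to M \to M'' \to 0$, using the standard characterization $\depth(N) = \min\{i : H^i_{\mathfrak{m}}(N) \neq 0\}$ (for $N \neq 0$; zero depth statements are handled by the convention $\depth(0)=+\infty$). This characterization is natural in the present context since the paper is already committed to working with local cohomology, and the long exact sequence
$$\cdots \to H^{i-1}_{\mathfrak{m}}(M'') \to H^i_{\mathfrak{m}}(M') \to H^i_{\mathfrak{m}}(M) \to H^i_{\mathfrak{m}}(M'') \to H^{i+1}_{\mathfrak{m}}(M') \to \cdots$$
furnishes, at each spot, a vanishing criterion of the exact form needed for each of (1), (2), (3).

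For (1), set $d=\min\{\depth(M'),\depth(M'')\}$ and fix $i<d$. Both $H^i_{\mathfrak{m}}(M')$ and $H^i_{\mathfrak{m}}(M'')$ vanish by definition of $d$, so exactness of the three-term piece $H^i_{\mathfrak{m}}(M') \to H^i_{\mathfrak{m}}(M) \to H^i_{\mathfrak{m}}(M'')$ forces $H^i_{\mathfrak{m}}(M)=0$, whence $\depth(M) \geq d$. For (2), set $d=\min\{\depth(M),\depth(M'')+1\}$ and fix $i<d$; then $H^i_{\mathfrak{m}}(M)=0$ and, because $i-1 < \depth(M'')$, also $H^{i-1}_{\mathfrak{m}}(M'')=0$, so the piece $H^{i-1}_{\mathfrak{m}}(M'') \to H^i_{\mathfrak{m}}(M') \to H^i_{\mathfrak{m}}(M)$ forces $H^i_{\mathfrak{m}}(M')=0$. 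For (3), set $d=\min\{\depth(M')-1,\depth(M)\}$ and fix $i<d$; then $H^i_{\mathfrak{m}}(M)=0$ and, because $i+1 < \depth(M')$, also $H^{i+1}_{\mathfrak{m}}(M')=0$, so the piece $H^i_{\mathfrak{m}}(M) \to H^i_{\mathfrak{m}}(M'') \to H^{i+1}_{\mathfrak{m}}(M')$ forces $H^i_{\mathfrak{m}}(M'')=0$.

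There is no genuine obstacle here; each inequality is a one-line consequence of the corresponding three-term piece of the long exact sequence. The only bookkeeping detail is to handle the edge case where one of the modules in the sequence vanishes or has infinite depth (with the convention $\depth(0)=+\infty$, the stated inequalities remain correct and the same argument applies unchanged), and to confirm that the case $d=0$ is vacuous, so no separate treatment is needed. One could equivalently run the same argument with $\Ext^i_R(k,-)$ in place of $H^i_{\mathfrak{m}}(-)$, which would be the more classical formulation and does not require any property special to the graded polynomial setting.
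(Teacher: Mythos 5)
Your proof is correct. The paper states the depth lemma without proof, labeling it as well-known, and your argument via the long exact sequence of local cohomology together with the characterization $\depth(N)=\min\{i : H^i_{\mathfrak{m}}(N)\neq 0\}$ is exactly the standard derivation; all three bounds follow correctly from the indicated three-term segments, and you handle the degenerate cases (zero module, $d=0$) appropriately.
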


\section{The dimension filtration}\label{section-dimension filtration}
Let us recall the concept of the dimension filtration introduced by Schenzel in \cite{schenzel1999dimension}.

\begin{definition}
Let $A$ be a Noetherian ring of dimension $d$, and $M$ be a finitely generated $A$-module.
\begin{enumerate}
\item Suppose $M_i$ is the largest submodule of $M$ such that $\dim(M_i) \leq i$.  Then We say $M_i$ is the largest submodule of $M$ of dimension at most $i$; if it happens that $\dim(M_i)=i$ we say it is the largest submodule of $M$ of dimension $i$.
\item We have that $0 \subset M_0 \subset M_1 \subset M_2 \subset \ldots \subset M_d=M$ forms a filtration of $M$, called the dimension filtration of $M$.
\item We say $M_i/M_{i-1}$ the $i$-th dimension factor of $M$.
\end{enumerate}
\end{definition}
\begin{remark}
For any $i$, $M_i$ exists by the Noetherian property, since the sum of two modules of dimension at most $i$ has dimension at most $i$. Each dimension factor $M_i/M_{i-1}$ is either 0 or of dimension $i$, and $M$ has no nonzero submodule of dimension $i$ if and only if the $i$-th dimension factor is 0. We set $M_{-1}=0$ so that the maximal submodule of dimension $0$ is also the $0$-th dimension factor.    
\end{remark}
In the following part of the paper, we will fix the notation $M_i$ for the dimension filtration instead of referring to other modules.

\begin{proposition}For any $M$, the following holds:
\item (1) $\Ass(M_i)=\{\mathfrak{p} \in \Ass(M)|\dim(A/\mathfrak{p}) \leq i\}$.

\item (2) $\Ass(M/M_i)=\{\mathfrak{p} \in \Ass(M)|\dim(A/\mathfrak{p}) > i\}$.

\item (3) $\Ass(M_i/M_{i-1})=\{\mathfrak{p} \in \Ass(M)|\dim(A/\mathfrak{p}) = i\}$.
\end{proposition}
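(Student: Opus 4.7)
The plan is to prove (1) essentially from the definition of associated primes together with the maximality of $M_i$, prove (2) by combining maximality with a short-exact-sequence argument, and then deduce (3) from (1) and (2) by applying them to the module $M_i$. The main obstacle will be the containment $\text{Ass}(M/M_i) \subset \text{Ass}(M)$ in (2), which is not automatic for arbitrary submodules and really uses the maximality of $M_i$.

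For (1), the forward inclusion follows because any $\mathfrak{p} \in \text{Ass}(M_i)$ gives an embedding $A/\mathfrak{p} \hookrightarrow M_i \subset M$, so $\mathfrak{p} \in \text{Ass}(M)$ and $\dim A/\mathfrak{p} \leq \dim M_i \leq i$. Conversely, if $\mathfrak{p} \in \text{Ass}(M)$ with $\dim A/\mathfrak{p} \leq i$, then $A/\mathfrak{p}$ embeds as a submodule $N$ of $M$ of dimension $\leq i$, so by maximality $N \subset M_i$, and then $\mathfrak{p} \in \text{Ass}(N) \subset \text{Ass}(M_i)$.

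For (2), I would first show that every $\mathfrak{p} \in \text{Ass}(M/M_i)$ satisfies $\dim A/\mathfrak{p} > i$: if not, the embedded copy of $A/\mathfrak{p}$ pulls back to a submodule $\tilde N \subset M$ fitting into $0 \to M_i \to \tilde N \to A/\mathfrak{p} \to 0$; since $\dim$ of an extension is the maximum of dimensions of the outer terms, $\dim \tilde N \leq i$, contradicting maximality of $M_i$. To get $\mathfrak{p} \in \text{Ass}(M)$, I would use the same $\tilde N$: from the short exact sequence, $\text{Ass}(\tilde N) \subset \text{Ass}(M_i) \cup \{\mathfrak{p}\}$, and if $\mathfrak{p} \notin \text{Ass}(\tilde N)$ then $\text{Ass}(\tilde N) \subset \text{Ass}(M_i)$, which forces every associated prime of $\tilde N$ to have dimension $\leq i$, hence $\dim \tilde N \leq i$, again contradicting maximality. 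So $\mathfrak{p} \in \text{Ass}(\tilde N) \subset \text{Ass}(M)$.

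For (3), the observation is that $M_{i-1}$ is itself the largest submodule of $M_i$ of dimension at most $i-1$, since any submodule of $M_i$ of dimension $\leq i-1$ is automatically a submodule of $M$ of dimension $\leq i-1$, hence lies in $M_{i-1}$. Applying (2) to $M_i$ gives $\text{Ass}(M_i/M_{i-1}) = \{\mathfrak{p} \in \text{Ass}(M_i) : \dim A/\mathfrak{p} > i-1\}$, and then substituting the description of $\text{Ass}(M_i)$ from (1) yields $\{\mathfrak{p} \in \text{Ass}(M) : \dim A/\mathfrak{p} = i\}$, as desired.
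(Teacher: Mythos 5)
The paper does not give its own argument for this proposition; it simply cites Corollary~2.3 of Schenzel's paper and remarks that the non-local case carries over. Your proof is therefore a genuinely self-contained alternative, and the strategy is sound: prove (1) directly, prove (2) via the pullback $\tilde N$ with $0 \to M_i \to \tilde N \to A/\mathfrak{p} \to 0$, and deduce (3) by re-applying (1) and (2) to $M_i$ after observing that $M_{i-1}$ is also the largest dimension-$\leq(i-1)$ submodule of $M_i$. Parts (1) and (3) are argued correctly, and the step you flagged as the main obstacle (showing $\text{Ass}(M/M_i) \subset \text{Ass}(M)$ via $\text{Ass}(\tilde N) \subset \text{Ass}(M_i)\cup\{\mathfrak p\}$ and a dimension count) is handled correctly.

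There is, however, a gap in (2): you only establish the inclusion $\text{Ass}(M/M_i) \subset \{\mathfrak{p}\in\text{Ass}(M) : \dim A/\mathfrak{p} > i\}$, and never the reverse one. A reader might think the reverse is automatic, but it does need a sentence: from $0\to M_i\to M\to M/M_i\to 0$ one has $\text{Ass}(M)\subset\text{Ass}(M_i)\cup\text{Ass}(M/M_i)$, and by part (1) every prime in $\text{Ass}(M_i)$ satisfies $\dim A/\mathfrak p\le i$; therefore any $\mathfrak p\in\text{Ass}(M)$ with $\dim A/\mathfrak p>i$ must lie in $\text{Ass}(M/M_i)$. This omission is not merely cosmetic, because your proof of (3) uses the full equality of (2) applied to the module $M_i$ (it substitutes the description of $\text{Ass}(M_i)$ into \emph{both sides} of the equality $\text{Ass}(M_i/M_{i-1})=\{\mathfrak p\in\text{Ass}(M_i):\dim A/\mathfrak p>i-1\}$), so the missing inclusion must be supplied for (3) to go through as written.
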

\begin{proof}See Corollary 2.3 of \cite{schenzel1999dimension}. The proof of the general case can be carried from that of the local case.
\end{proof}
\begin{corollary}For any $M$, the following holds:
\item (1) $M/M_i$ has no nonzero submodule of dimension at most $i$.

\item (2) $M_i=0$ if and only if for any $\mathfrak{p} \in \Ass(M)$, $\dim(A/\mathfrak{p}) > i$. If $A$ is a local catenary domain, this is equivalent to $\textup{ht}(\mathfrak{p}) < \dim(A) - i$.
\end{corollary}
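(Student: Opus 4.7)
The plan is to derive both parts directly from the definition of $M_i$ as the maximal submodule of dimension at most $i$, with part (1) using only that maximality and part (2) combining the maximality with Proposition 3.1(1). I expect no real obstacle; the statement is essentially a formal consequence of the preceding proposition.

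For (1), I would argue by contradiction. Suppose $N \subset M/M_i$ is a nonzero submodule with $\dim N \leq i$. Let $\pi\colon M \to M/M_i$ denote the quotient and set $N' = \pi^{-1}(N)$, giving a short exact sequence
\[
0 \to M_i \to N' \to N \to 0.
\]
Since $\dim$ is subadditive for short exact sequences, $\dim N' \leq \max(\dim M_i, \dim N) \leq i$. By the defining maximality of $M_i$ we get $N' \subseteq M_i$, hence $N = \pi(N') = 0$, a contradiction.

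For the forward direction of (2), suppose some $\mathfrak{p} \in Ass(M)$ has $\dim A/\mathfrak{p} \leq i$. Choosing an embedding $A/\mathfrak{p} \hookrightarrow M$ produces a nonzero submodule of $M$ of dimension at most $i$, which must therefore lie inside $M_i$; in particular $M_i \neq 0$. For the reverse direction, I would invoke Proposition 3.1(1): the assumption that every $\mathfrak{p} \in Ass(M)$ satisfies $\dim A/\mathfrak{p} > i$ makes $Ass(M_i) = \{\mathfrak{p} \in Ass(M) : \dim A/\mathfrak{p} \leq i\}$ empty, and a finitely generated module with empty associated-prime set is zero.

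The final clause is a pure translation via the dimension formula in a local catenary domain. Since $A$ is a catenary local domain, every saturated chain of primes from $(0)$ to the maximal ideal has length $\dim A$, and one such chain can be taken to pass through any given $\mathfrak{p}$; this yields $\textup{ht}(\mathfrak{p}) + \dim A/\mathfrak{p} = \dim A$. Substituting, the condition $\dim A/\mathfrak{p} > i$ becomes $\textup{ht}(\mathfrak{p}) < \dim A - i$, which is exactly the restated form.
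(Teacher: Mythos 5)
Your proof is correct, and it matches the level of detail the paper leaves implicit (the corollary is stated without proof, as a consequence of Proposition 3.1). The one point of difference worth noting: for (1) the paper's intended route is almost certainly to invoke Proposition 3.1(2) directly -- any nonzero submodule of $M/M_i$ has an associated prime $\mathfrak{p}$, which lies in $Ass(M/M_i)$ and hence satisfies $\dim A/\mathfrak{p} > i$, forcing the submodule to have dimension $> i$. You instead argue from the defining maximality of $M_i$ alone, via the extension $0 \to M_i \to N' \to N \to 0$ and the fact that $\dim N' = \max(\dim M_i, \dim N)$; this is a slightly more elementary argument that bypasses the associated-prime characterization entirely. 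Both are valid, and for (2) and the catenary clause you use exactly the ingredients the paper intends: Proposition 3.1(1), the fact that a finitely generated module with empty associated-prime set is zero, and the dimension formula $\textup{ht}(\mathfrak{p}) + \dim A/\mathfrak{p} = \dim A$ in a catenary local domain.
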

We will take $A=R$ in the following part of the paper. For a general module $M$, the filtration $0 \subset M_0 \subset M_1 \subset M$ induces a decomposition of local cohomology tables.
\begin{lemma}\label{3.3}We have $M_0=H^0_{\mathfrak{m}}(M)$, and the exact sequence $0 \to M_0 \to M \to M/M_0 \to 0$ induces a decomposition of local cohomology tables, and $\depth(M/M_0) \geq 1$.
\end{lemma}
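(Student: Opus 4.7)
The proof splits naturally into three tasks, all of which follow quickly from the definitions plus the property of $H^0_\mathfrak{m}$.

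First I would identify $M_0$ with $H^0_\mathfrak{m}(M)$. A finitely generated graded $R$-module has dimension at most $0$ iff it has finite length iff it is $\mathfrak{m}$-torsion. Since $H^0_\mathfrak{m}(M)$ is by definition the largest $\mathfrak{m}$-torsion submodule of $M$, and $M_0$ is by definition the largest submodule of dimension at most $0$, the two coincide.

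Next I would verify $\depth(M/M_0)\geq 1$. By Corollary 3.2(1), $M/M_0$ has no nonzero submodule of dimension at most $0$, hence no nonzero $\mathfrak{m}$-torsion submodule. Therefore $H^0_\mathfrak{m}(M/M_0)=0$, i.e.\ $\depth(M/M_0)\geq 1$.

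Finally, to see that the short exact sequence $0\to M_0\to M\to M/M_0\to 0$ induces a decomposition of local cohomology tables, I would invoke the criterion recalled just before the depth lemma: it suffices to show that every connecting map $H^i_\mathfrak{m}(M/M_0)\to H^{i+1}_\mathfrak{m}(M_0)$ vanishes. But $M_0$ has dimension at most $0$, so $H^j_\mathfrak{m}(M_0)=0$ for all $j\geq 1$; in particular the target of each connecting map is zero.

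There is no real obstacle here, since each of the three assertions is essentially a direct translation of the definitions once one knows that $H^0_\mathfrak{m}$ recovers the zero-dimensional part of a module. The only minor care is in the last step to include the case $i=0$, which is already covered because $H^1_\mathfrak{m}(M_0)=0$ (the target vanishes, whether or not $H^0_\mathfrak{m}(M/M_0)$ does, and in fact the latter is zero too by the second step).
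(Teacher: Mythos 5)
Your proof is correct. The paper actually omits the proof here, calling it trivial, and your argument is exactly the straightforward one the authors surely had in mind: identify $M_0$ with $H^0_{\mathfrak m}(M)$ via the characterization of dimension-zero modules as $\mathfrak m$-torsion modules, use the vanishing $H^j_{\mathfrak m}(M_0)=0$ for $j\geq 1$ to kill all connecting maps, and use the absence of finite-length submodules of $M/M_0$ to get $\depth(M/M_0)\geq 1$. No gaps.
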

The proof is trivial and we omit it.
\begin{lemma}\label{3.4}Assume $\depth(M) \geq 1$. Then the exact sequence $0 \to M_1 \to M \to M/M_1 \to 0$ induces a decomposition of local cohomology tables.
\end{lemma}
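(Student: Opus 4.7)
The plan is to show directly that every connecting homomorphism $\delta^{i}: H^{i}_{\mathfrak{m}}(M/M_{1}) \to H^{i+1}_{\mathfrak{m}}(M_{1})$ in the long exact sequence attached to $0 \to M_{1} \to M \to M/M_{1} \to 0$ vanishes; by the criterion recalled just before the Depth lemma, this gives exactly the additivity $H^{\bullet}(M) = H^{\bullet}(M_{1}) + H^{\bullet}(M/M_{1})$.

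First I would split the range of $i$ into two cases. For $i \geq 1$, the target $H^{i+1}_{\mathfrak{m}}(M_{1})$ vanishes for the simple dimension reason that $\dim M_{1} \leq 1$, so $H^{j}_{\mathfrak{m}}(M_{1}) = 0$ for all $j \geq 2$; hence $\delta^{i} = 0$ for $i \geq 1$ is automatic.

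The remaining case is $i = 0$, where I would instead show the source vanishes. By Corollary \ref{3.2}(1), $M/M_{1}$ has no nonzero submodule of dimension at most $1$; in particular it has no nonzero submodule of dimension $0$. Since $H^{0}_{\mathfrak{m}}(M/M_{1})$ coincides with the largest submodule of finite length of $M/M_{1}$ (equivalently the $0$-th piece of its own dimension filtration), this forces $H^{0}_{\mathfrak{m}}(M/M_{1}) = 0$, and so $\delta^{0} = 0$ as well. Note that the hypothesis $\depth(M) \geq 1$ is not even needed for this vanishing; it only ensures that the preceding step via Lemma \ref{3.3} has been carried out so that $M_{0} = 0$ and $M_{1}$ is either zero or Cohen--Macaulay of dimension $1$, which is the natural setting in which to apply the lemma.

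There is no real obstacle here: the argument is purely a dimension-count on the two ends of the connecting map, with Corollary \ref{3.2} supplying the one nontrivial vanishing. Once all $\delta^{i}$ are zero, the long exact sequence of local cohomology breaks into short exact pieces $0 \to H^{i}_{\mathfrak{m}}(M_{1}) \to H^{i}_{\mathfrak{m}}(M) \to H^{i}_{\mathfrak{m}}(M/M_{1}) \to 0$ in each degree, which yields the claimed decomposition of local cohomology tables.
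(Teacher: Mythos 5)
Your proof is correct and follows essentially the same route as the paper: the connecting maps $\delta^i \colon H^i_{\mathfrak m}(M/M_1)\to H^{i+1}_{\mathfrak m}(M_1)$ vanish because the target vanishes for dimension reasons when $i\geq 1$ and the source vanishes when $i=0$ since $M/M_1$ has no nonzero submodule of dimension $\leq 1$. Your side remark that the hypothesis $\depth(M)\geq 1$ is not needed for this particular vanishing is accurate (the paper uses it only to conclude that $M_1$ is zero or Cohen--Macaulay of dimension $1$, consistent with the preceding reduction via Lemma \ref{3.3}), but this does not change the substance of the argument.
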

\begin{proof}
We have depth$M_1 \geq$ 1 because $M_1$ is a submodule of $M$ and depth$M \geq$ 1. This means that $M_1$ is either 0 or Cohen-Macaulay of dimension 1. Also, $M/M_1$ does not have submodule of dimension at most 1; hence $H^0_{\mathfrak{m}}(M/M_1)=0$. Now the long exact sequence of local cohomology modules breaks up into short exact sequences:
$$0 \to H^1_{\mathfrak{m}}(M_1) \to H^1_{\mathfrak{m}}(M) \to H^1_{\mathfrak{m}}(M/M_1)  \to 0$$
and
$$0  \to H^i_{\mathfrak{m}}(M) \to H^i_{\mathfrak{m}}(M/M_1) \to 0$$
for any $i \geq 2$, and all the connecting homomorphisms are 0, so $H^{\bullet}(M)=H^{\bullet}(M_1)+H^{\bullet}(M/M_1)$.
\end{proof}
The dimension filtration relates a module $M$ to its dimension factors $M_i/M_{i-1}$. Sometimes, this relation may give us a decomposition of $H^\bullet(M)$ into $H^\bullet(M_i/M_{i-1})$. In \cite{CAVIGLIA2021106635}, Caviglia and De Stefani studied a class of modules called sequentially Cohen-Macaulay modules. This concept is first defined by Stanley \cite{Stanley1983CombinatoricsAC}. By definition, a module is sequentially Cohen-Macaulay if all its nonzero dimension factors are Cohen-Macaulay. The decomposition of local cohomology table of a sequentially Cohen-Macaulay module is given by the following result:
\begin{theorem}[Caviglia-De Stefani \cite{CAVIGLIA2021106635}]\label{theorem-decomposition of SCM module}
Let $M$ be a sequentially Cohen-Macaulay module and $M_i$ be its largest submodule of dimension at most $i$, then
$$H^\bullet(M)=\sum_{0 \leq i \leq \dim(M)} H^\bullet(M_i/M_{i-1}).$$
\end{theorem}
In this case, using the fact that the dimension factors are Cohen-Macaulay, we can decompose the tables $H^\bullet(M_i/M_{i-1})$ further into tables of the form $$H^\bullet(R/(x_{e+1},\ldots,x_n)[d])$$ for some number $e$ of variables and shifts $d \in \mathbb{Z}$, so the decomposition is clear.

Recall that a module $M$ is almost Cohen-Macaulay if $\depth(M)=\dim(M)-1$. We give a more general definition called sequentially almost Cohen-Macaulay:
\begin{definition}\label{definition-sacm}
Let $M$ be a finitely generated $A$-module. We say $M$ is sequentially almost Cohen-Macaulay (saCM for short) if all its nonzero dimension factors are Cohen-Macaulay or almost Cohen-Macaulay.
\end{definition}
The $i$-th dimension factor $M_i/M_{i-1}$ of $M$ must have dimension $i$ if it is nonzero, so if $M$ is sequentially Cohen-Macaulay then $\depth(M_i/M_{i-1})=i$, and if $M$ is saCM then $\depth(M_i/M_{i-1})=i$ or $i-1$. Note that by definition sequentially Cohen-Macaulay implies saCM.

Theorem \ref{theorem-decomposition of SCM module} for sequentially Cohen-Macaulay is still true for saCM modules: the local cohomology table of an saCM module decomposes into local cohomology tables of its dimension factors. More precisely, we have:
\begin{proposition}\label{3.6}Let $M$ be an saCM module, $M_i$ be its largest submodule of dimension at most $i$.
\item (1) $\depth(M/M_i) \geq i$ for any $0 \leq i \leq n-1$.
\item (2) $0 \to M_i/M_{i-1} \to M/M_{i-1} \to M/M_i \to 0$ induces a decomposition in local cohomology tables.
\item (3) $H^{\bullet}(M)=\sum^n_{i=0}H^{\bullet}(M_i/M_{i-1})$.
\end{proposition}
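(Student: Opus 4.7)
The plan is to prove the three parts in order, since (1) feeds into (2) and (2) feeds into (3).

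For (1), I would induct downward on $i$, starting from $i = n$ where $M_n = M$ gives $M/M_n = 0$ and $\depth(M/M_n) = \infty$ trivially. The inductive step applies part (1) of the depth lemma to the short exact sequence
$$0 \to N_j \to M/M_{j-1} \to M/M_j \to 0,$$
which gives $\depth(M/M_{j-1}) \geq \min\{\depth N_j, \depth(M/M_j)\}$. Since $M$ is saCM, each nonzero dimension factor $N_j$ has dimension $j$ and depth $j$ or $j-1$, so $\depth N_j \geq j-1$; combined with the inductive hypothesis $\depth(M/M_j) \geq j$, this gives $\depth(M/M_{j-1}) \geq j-1$, which is exactly the $i = j-1$ case. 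The downward direction is important: an upward induction via part (3) of the depth lemma would only yield $\depth(M/M_i) \geq i-2$, which is too weak.

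For (2), I would inspect the long exact local cohomology sequence attached to $0 \to N_i \to M/M_{i-1} \to M/M_i \to 0$ and argue that every connecting map $H^j_{\mathfrak{m}}(M/M_i) \to H^{j+1}_{\mathfrak{m}}(N_i)$ vanishes. The saCM hypothesis forces $\dim N_i \leq i$ and $\depth N_i \geq i-1$, so $H^k_{\mathfrak{m}}(N_i) = 0$ unless $k \in \{i-1, i\}$; in particular the target can be nonzero only when $j \in \{i-2, i-1\}$. For both of these values one has $j < i$, so part (1) forces $H^j_{\mathfrak{m}}(M/M_i) = 0$ and the source vanishes instead. Every connecting map is therefore zero, the long exact sequence breaks into short exact sequences, and one obtains $H^{\bullet}(M/M_{i-1}) = H^{\bullet}(N_i) + H^{\bullet}(M/M_i)$.

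Part (3) follows by iterating the decomposition of (2). Starting from $H^{\bullet}(M) = H^{\bullet}(M/M_{-1})$ with the convention $M_{-1} = 0$, one peels off one dimension factor at a time to arrive at $H^{\bullet}(M) = \sum_{i=0}^n H^{\bullet}(N_i) + H^{\bullet}(M/M_n)$, and the last term vanishes because $M/M_n = 0$. No step presents a real obstacle; the key conceptual point is that the saCM condition squeezes each $H^{\bullet}(N_i)$ into at most two cohomological degrees, which exactly matches the vanishing range of $H^j_{\mathfrak{m}}(M/M_i)$ coming from (1), leaving the connecting homomorphisms no room to be nonzero.
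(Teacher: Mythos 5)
Your proof is correct and follows essentially the same approach as the paper: downward induction with the depth lemma for (1), vanishing of connecting maps via the dimension bound on $N_i$ and the depth bound from (1) on $M/M_i$ for (2), and iteration for (3). The only cosmetic difference is in (2), where you additionally invoke $\depth N_i \geq i-1$ to pin the possible nonzero degree of the target to $j \in \{i-2,i-1\}$, whereas the paper needs only $\dim N_i \leq i$ together with $\depth(M/M_i) \geq i$ to kill every connecting map; both routes are valid and lead to the same conclusion.
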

\begin{proof}
\item (1) We prove this by induction from $i=n-1$; in this case $\depth(M/M_{n-1}) \geq n-1$ by the saCM assumption because $M/M_{n-1}$ is just the $n$-th dimension factor. Suppose (1) is true for $i$. Consider the exact sequence $0 \to M_i/M_{i-1} \to M/M_{i-1} \to M/M_i \to 0$. We have $\depth(M_i/M_{i-1}) \geq i-1$ by the saCM assumption and $\depth(M/M_i) \geq i$ by the induction hypothesis, so by the depth lemma, $\depth(M/M_{i-1}) \geq i-1$, which implies that (1) is true for $i-1$, hence by induction (1) is true for any $0 \leq i \leq n-1$.
\item (2) The boundary maps of the long exact sequence of local cohomology modules are $H^k_{\mathfrak{m}}(M/M_i) \to H^{k+1}_{\mathfrak{m}}(M_i/M_{i-1})$. We have $\dim(M_i)/M_{i-1} \leq i$ and $\depth(M/M_i) \geq i$. Thus if $k \leq i-1$ then the source of the map is 0; if $k \geq i$ then the target is 0. Therefore, the boundary maps are 0 and the exact sequence induces a decomposition in local cohomology tables.
\item (3) Apply (2) inductively.

\end{proof}
It follows that to find the decomposition of local cohomology tables of saCM modules, we only need to decompose the tables of almost Cohen-Macaulay modules.

There is an important principle for modules of lower dimension: if $\dim(M) < \dim(R)$, then to find the decomposition of $H^{\bullet}(M)$, we may always replace $R$ with a new polynomial ring $S$ with $\dim(S)=\dim(M)$. This is done by the following lemma:
\begin{lemma}\label{3.7}
Let $R=k[x_1,\ldots,x_n]$ and $S=k[y_1,\ldots,y_d]$ be two standard graded polynomial rings with $n \geq d$. For a finitely generated graded $R$-module $M$ with $\dim(M) \leq d$, there is a finitely generated graded $S$-module $N$ such that $H^{\bullet}(M)$ can be obtained from $H^{\bullet}(N)$ by multiplying a positive rational scalar and adding 0's.
\end{lemma}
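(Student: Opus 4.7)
The plan is to realize $M$ as a module over a polynomial subring of $R$ of dimension $d$ generated by linear forms, possibly after a finite field extension. Set $d' := \dim M \leq d$. I want linearly independent linear forms $\ell_1,\ldots,\ell_d \in R_1$ with $\sqrt{(\ell_1,\ldots,\ell_d) + \ann M} = \mathfrak{m}_R$, so that $M$ becomes finite over the polynomial subring $T := k[\ell_1,\ldots,\ell_d]$. If $k$ is infinite, graded Noether normalization produces such $\ell_i$ directly. If $k$ is finite, the condition is Zariski-open in the coefficients, so it holds over some finite extension $k' \supset k$; I then work over $R_{k'} := R \otimes_k k'$ and $M_{k'} := M \otimes_k k'$, and set $r := [k':k]$ (with $r = 1$ when no extension is needed).

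The second step is a \v{C}ech-complex comparison. With $T := k'[\ell_1,\ldots,\ell_d] \subset R_{k'}$, the complex $C^{\bullet}(\ell_1,\ldots,\ell_d; M_{k'})$ depends only on the $\ell_i$ and on the underlying abelian group $M_{k'}$, not on whether one regards $M_{k'}$ as a $T$-module or an $R_{k'}$-module. Its cohomology computes $H^i_{\mathfrak{m}_T}(M_{k'})$ on one side and, using that $(\ell_1,\ldots,\ell_d) R_{k'} + \ann M_{k'}$ is $\mathfrak{m}_{R_{k'}}$-primary, $H^i_{\mathfrak{m}_{R_{k'}}}(M_{k'})$ on the other. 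Hence these two graded modules have equal graded $k'$-dimensions, and both vanish for $i > d$.

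The third step is to descend to $S$. Via the isomorphism $T \cong S \otimes_k k'$ (sending $\ell_i \mapsto y_i$), I let $N$ be $M_{k'}$ regarded as an $S$-module through $S \hookrightarrow S \otimes_k k'$. Since $S \otimes_k k'$ is free of rank $r$ over $S$ and $\mathfrak{m}_S (S \otimes_k k') = \mathfrak{m}_{S \otimes_k k'}$, the local cohomology $H^i_{\mathfrak{m}_S}(N)$ coincides with $H^i_{\mathfrak{m}_{S \otimes_k k'}}(M_{k'})$ as graded abelian groups, so $\dim_k H^i_{\mathfrak{m}_S}(N)_j = r \cdot \dim_{k'} H^i_{\mathfrak{m}_{S \otimes_k k'}}(M_{k'})_j$. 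Combined with flat base change $\dim_{k'} H^i_{\mathfrak{m}_{R_{k'}}}(M_{k'})_j = \dim_k H^i_{\mathfrak{m}_R}(M)_j$, this yields $H^{\bullet}(N) = r \cdot H^{\bullet}(M)$ in rows $0 \leq i \leq d$, with both tables zero for $i > d$. Therefore $H^{\bullet}(M) = \tfrac{1}{r} H^{\bullet}(N)$ after padding $H^{\bullet}(N)$ with zero rows in positions $d+1,\ldots,n$.

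The main obstacle is the case of small (in particular finite) $k$, where no linear system of parameters for $R/\ann M$ need exist over $k$ itself. The finite extension to $k'$ is unavoidable and introduces the factor $1/r$ --- this is exactly why the statement allows a positive rational scalar rather than an equality of tables. Aside from this, the remaining pieces (graded Noether normalization over an infinite field, the \v{C}ech-complex independence of the ambient ring, and the behaviour of local cohomology under the faithfully flat finite extension $S \to S \otimes_k k'$) are standard commutative algebra.
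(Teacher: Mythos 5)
Your proof is correct, and it takes a genuinely more self-contained route than the paper. The paper's argument leans entirely on Lemma~2.2 of \cite{de2021decomposition}: for infinite $k$ it simply cites that lemma, and for finite $k$ it first passes to $l=\bar k$, invokes the lemma over $l$, and then descends by a spreading-out step (the produced module over $S_l$ is extended from a finite subextension $k'$), followed by restriction of scalars along $S\hookrightarrow S_{k'}$. You instead unpack what the cited lemma is doing --- graded Noether normalization to a polynomial subring $T=k'[\ell_1,\ldots,\ell_d]$ over which $M$ is finite, the observation that the \v{C}ech complex on $\ell_1,\ldots,\ell_d$ simultaneously computes $H^\bullet_{\mathfrak m_T}$ and $H^\bullet_{\mathfrak m_{R_{k'}}}$, and restriction of scalars along $S\hookrightarrow S\otimes_k k'$ --- and you go directly to a finite extension $k'$ via the Zariski-openness of the Noether-normalization condition rather than routing through $\bar k$ and spreading out. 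Both approaches introduce the scalar $[k':k]$ in the same way. What the paper's version buys is brevity via the reference; what yours buys is independence from that reference and a slightly tighter finite-field argument that avoids the algebraic closure. One small cosmetic point: graded Noether normalization for $R/\operatorname{ann} M$ produces only $\dim M$ linear parameters, so when $\dim M<d$ you should explicitly complete to $d$ linearly independent forms (generic additional ones suffice); this is implicit in your requirement that the $\ell_i$ be linearly independent and that $(\ell_1,\ldots,\ell_d)+\operatorname{ann} M$ be $\mathfrak m_R$-primary, but it is worth a sentence.
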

\begin{proof}If $k$ is infinite, see Lemma 2.2 of \cite{de2021decomposition}. If $k$ is finite, let $l=\bar{k}$ be its algebraic closure. Then by Lemma 2.2 of \cite{de2021decomposition}, there is an $S_l=l[y_1,\ldots,y_d]$-module $N$ such that $H^{\bullet}(N)$ and $H^{\bullet}(M)$ differ by a positive rational scalar. But $N$ is finitely generated, so there is a finite extension $k'$ of $k$ in $l$ such that $N$ is extended from $S_{k'}=k'[y_1,\ldots,y_d]$, that is, there is an $S_{k'}$-module $N_{k'}$ such that $N=N_{k'}\otimes_{k'}l$. Then $H^{\bullet}(N)=H^{\bullet}(N_{k'})$. Since $S=k[y_1,\ldots,y_d]$ embeds into $S_{k'}$ and this is module-finite, we can endow $N_{k'}$ with an $S$-module structure and it becomes a finitely generated graded $S$-module, say $M'$. Then $H^{\bullet}(N_{k'})=H^{\bullet}(M')[k':k]$. Thus, $H^{\bullet}(M)$ and $H^{\bullet}(M')$ only differ by a positive rational scalar.
\end{proof}
\begin{remark}Lemma \ref{3.7} is a variant of Lemma 2.2 of \cite{de2021decomposition} except that we do not change the base field. The table $H^{\bullet}(M)$ consists of $n+1$ series and the table $H^{\bullet}(N)$ consists of $d+1$ series so we need to add $n-d$ 0's to make their sizes equal.
\end{remark}
The 0 entries in $H^{\bullet}(M)$ remains 0 in $H^{\bullet}(N)$, so for any choice of $N$, we have $\depth(M)=\depth(N)$ and $\dim(M)=\dim(N)$. If we fix a choice of $N$ for every $M$ and view it as a correspondence, then almost Cohen-Macaulay modules correspond to almost Cohen-Macaulay modules of maximal dimension, which are just the modules with maximal dimension and projective dimension 1. We will find how to decompose the local cohomology tables of modules of projective dimension 1 in Section \ref{section-pd<=1case}.
\begin{remark}Let $R=k[x_1,\ldots,x_n]$ and $S=k[y_1,\ldots,y_d]$ be two standard graded polynomial rings with $n \geq d$. Then $S \cong R/(x_n,\ldots,x_{d+1})$. The local cohomology table of an $S$-module $M$ does not change if we view $M$ as an $R$-module, so there is a natural inclusion from $\{H^{\bullet}(M), M$ is an $S$-module$\}$ to $\{H^{\bullet}(M), M$ is an $R$-module$\}$. Hence there is a natural inclusion of their cones, that is, $\mathbb{Q}_{\geq 0}\{H^{\bullet}(M), M$ is an $S$-module$\}$ injects into $\mathbb{Q}_{\geq 0}\{H^{\bullet}(M), M$ is an $R$-module$\}$. By Lemma \ref{3.7}, the image is just $\mathbb{Q}_{\geq 0}\{H^{\bullet}(M), M$ is an $R$-module, $\dim(M) \leq \dim(S)\}$. In this sense, when we study the decomposition of local cohomology tables of $R$-modules of dimension at most $d$, it suffices to study the decomposition of local cohomology tables of $S$-modules.
\end{remark}

\section{Projective dimension 1 case}\label{section-pd<=1case}
This section describes the cone generated by $H^{\bullet}(M)$ where $M$ is an $R$-module with projdim($M$) $\leq$ 1, how the tables in this cone decompose, and how the decomposition of such tables leads to the decomposition of tables of saCM modules.

First, let us recall the definition of Auslander transpose introduced by Auslander and Bridger in \cite{auslander1969stable}. Let $\cdot^*=\Hom_R(\cdot,R)$ be the dual functor.
\begin{definition}\label{Sec4-auslander transpose}
Let $M$ be a finitely generated module over a $R$. Consider a finite presentation $F_1 \xrightarrow{\phi} F_0 \to M \to 0$. Taking dual yields an exact sequence $0 \to M^* \to F_0^* \xrightarrow{\phi^*} F_1^* \to N \to 0$. Then the \emph{Auslander transpose} of $M$ is $\Tr(M) = N = \textup{Coker}(\phi^*)$. If $\phi$ is a minimal presentation, then $\Tr(M)$ is called the \emph{minimal Auslander transpose}.
\end{definition}
\begin{remark}In general, the Auslander transpose is unique up to a projective summand. However, the minimal Auslander transpose is unique up to isomorphism; so in the following sections, when we mention the module $\Tr(M)$, we always mean the minimal Auslander transpose.
\end{remark}
Here are some basic properties of the Auslander transpose.
\begin{proposition}
\item (1) The Auslander transpose of a graded module is also graded.

\item (2) If \textup{projdim}$(M) \leq 1$, then $\Tr(M) = \Ext^1_R(M,R)$.

\item (3) $\Tr(M)=0$ if and only if \textup{projdim}$(M)=0$, that is, $M$ is free.

\item (4) $\Tr(M \oplus M')=\Tr(M) \oplus \Tr(M')$.

\item (5) $M=\Tr(\Tr(M))\oplus F$, $F$ is free and $\Tr(\Tr(M))$ does not have a free summand.
\end{proposition}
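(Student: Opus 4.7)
Parts (1)--(4) are standard facts that follow directly from the definition via minimal presentations and can be handled as a warm-up. For (1), a homogeneous minimal presentation $\phi\colon F_1 \to F_0$ exists, making $\phi^*$ a degree $0$ graded map between graded free modules, so $\Tr(M)$ inherits a grading. For (2), when $\textup{projdim}(M) \leq 1$ the minimal presentation is already a free resolution, and the dualized four-term exact sequence $0 \to M^* \to F_0^* \to F_1^* \to \Ext^1_R(M,R) \to 0$ identifies $\Tr(M)$ with $\Ext^1_R(M,R)$. For (3), if $M$ is free we can take $F_1 = 0$ and $\Tr(M) = 0$; conversely, $\Tr(M) = 0$ makes $\phi^*$ surjective with matrix entries in $\mathfrak{m}$ (the matrix of $\phi^*$ being the transpose of that of $\phi$, whose entries lie in $\mathfrak{m}$ by minimality), so graded Nakayama forces $F_1 = 0$. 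For (4), the direct sum of two minimal presentations is a minimal presentation of the direct sum and $\Hom_R(-,R)$ commutes with finite direct sums.

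The substance of the proposition lies in (5). My plan is to first reduce to the case where $M$ has no nonzero free summand, using Krull--Schmidt for finitely generated graded modules over the graded local ring $R$ to write $M = M' \oplus F$ with $F$ a maximal free summand, and invoking (4) to reduce the statement to $\Tr(\Tr(M')) = M'$. The crux is the following characterization: \emph{$M$ has no free summand if and only if $M^* \subseteq \mathfrak{m} F_0^*$ for a minimal cover $F_0 \to M$}. One direction is straightforward: a split surjection $M \to R(-a)$ lifts to an element of $F_0^*$ with a unit entry in some basis coordinate. The other direction needs a graded argument: if a homogeneous $f \in M^*$ has a lift $\hat f \in F_0^*$ with a unit entry at a basis vector $e_j \in F_0$, then the corresponding generator $g_j$ of $M$ maps under $f$ to a unit in the appropriate shift of $R$, yielding a splitting $R(-\deg e_j) \hookrightarrow M$ and hence a free summand.

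Granting this characterization, when $M$ has no free summand we have $\ker(\phi^*) = M^* \subseteq \mathfrak{m} F_0^*$, while minimality of $\phi$ gives $\textup{image}(\phi^*) \subseteq \mathfrak{m} F_1^*$. Together these show that $F_0^* \xrightarrow{\phi^*} F_1^* \to \Tr(M) \to 0$ is itself a minimal presentation. Applying $\Tr$ once more, together with the canonical identifications $F_i^{**} = F_i$ and $\phi^{**} = \phi$, produces $\Tr(\Tr(M)) = \textup{coker}(\phi) = M$, which combined with the reduction gives (5). The main obstacle I anticipate is the careful degree bookkeeping in the graded half of the characterization above; once that is in place, the rest of (5) is essentially formal manipulation with minimal presentations.
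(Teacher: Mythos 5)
Your proof is correct. The paper states this proposition without proof, treating it as a list of standard facts about the Auslander transpose, so there is no paper argument to compare against line by line. Your handling of (1)--(4) is fine. The key lemma you isolate for (5) --- that a module $M$ with minimal free cover $\pi\colon F_0 \to M$ has no free summand if and only if $\pi^*(M^*) \subseteq \mathfrak{m}F_0^*$ inside $F_0^*$ --- is correct, and it is in fact the same mechanism the paper uses later in the proof of Proposition~\ref{4.4}, where it appears in the guise: if the images of the basis vectors of $F_0^*$ do not minimally generate $\textup{image}(\phi^*)$, then after a change of basis some $e_i^*$ lies in $\ker(\phi^*)$, and dualizing back shows $Re_i$ is a free summand of $M=\textup{coker}(\phi)$. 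The degree bookkeeping you flag resolves without trouble: choosing $f \in M^*$ homogeneous with $f\pi = \sum_i c_i e_i^*$ and some $c_j \notin \mathfrak{m}$, homogeneity of $f$ and of the $e_i$ forces $c_j$ to lie in $R_0 = k$, hence $c_j \in k^{\times}$ and $\deg f + \deg e_j = 0$, so $c_j^{-1}f$ splits the degree-zero map $R(-\deg e_j) \to M$, $1 \mapsto \pi(e_j)$. Combined with the elementary fact that $\textup{image}(\phi^*) \subseteq \mathfrak{m}F_1^*$ (entries of $\phi$, hence of $\phi^*$, lie in $\mathfrak{m}$), this shows $F_0^* \xrightarrow{\phi^*} F_1^* \to \Tr(M) \to 0$ is a minimal presentation whenever $M$ has no free summand, and $\Tr\Tr(M) = \textup{coker}(\phi^{**}) = \textup{coker}(\phi) = M$ follows; your Krull--Schmidt reduction then completes (5).
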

Let $M$ be a finitely generated graded $R$-module. To study the decomposition of $H^{\bullet}(M)$, we may assume $M$ is indecomposable without loss of generality. If projdim$M=0$ then $M$ is free, and we must have $M \cong R(-i)$ for some $i$. In this case $H^{\bullet}(M)$ is clear. So we may assume that projdim$(M) = 1$. Here is an important observation about the properties of $M$ and $\Tr(M)$.
\begin{proposition}\label{4.4}
\item (1) Let $M$ be a finitely generated $R$-module of projective dimension 1 with no free summand. Let $0 \to F_1 \xrightarrow{\phi} F_0 \to M \to 0$ be a minimal presentation which is also a minimal resolution. Let $N=\Tr(M) \neq 0$, then \textup{dim}$N$ $\leq$ $n-1$, and $F_0^* \xrightarrow{\phi^*} F_1^* \to N \to 0$ is a minimal presentation of $N$.
\item (2) Let $N$ be a nonzero module with \textup{dim}$(N)$ $\leq$ $n-1$. Take a minimal presentation $G_1 \xrightarrow{\psi} G_0 \to N \to 0$. Then $G_0^* \xrightarrow{\psi^*} G_1^*$ is injective and its image lies in $\mathfrak{m}G_1^*$, so if $M=\Tr(N)=\textup{Coker}(\psi^*)$, then \textup{projdim}$M = 1$ and $M$ has a minimal resolution $0 \to G_0^* \xrightarrow{\psi^*} G_1^* \to M \to 0$. Also, $M$ does not have a free summand.
\item (3) Taking the minimal Auslander transpose $\Tr$ induces a 1-1 correspondence between the isomorphism classes of finitely generated graded $R$-modules $M$ of projective dimension 1 without a free summand and finitely generated graded $R$-modules $N$ of dimension at most $n-1$.
\item (4) Under the assumption in (3), $\beta_{0,j}(M)=\beta_{1,-j}(N)$, $\beta_{1,j}(M)=\beta_{0,-j}(N)$, or equivalently, $\beta_0(M)(t)=\beta_1(N)(t^{-1}), \beta_1(M)(t)=\beta_0(N)(t^{-1})$.
\end{proposition}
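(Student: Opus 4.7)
My plan is to leverage two structural facts throughout: that $R$ is a graded polynomial ring (hence a domain whose only associated prime is the zero ideal), and that a minimal presentation has all its matrix entries in $\mathfrak{m}$. These two facts make the duality arguments almost automatic, and Proposition 4.3(5) will close up the bijection in part (3).

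For part (1), minimality of $\phi$ forces the entries of $\phi^*$ to lie in $\mathfrak{m}$, so $F_0^* \xrightarrow{\phi^*} F_1^* \to N \to 0$ is automatically a minimal presentation; that $N \neq 0$ follows from Proposition 4.3(3) since $M$ is not free. For the dimension bound I would localize at the zero prime of $R$: over the field $R_{(0)}$ the module $M_{(0)}$ is free, hence $N_{(0)} = \Ext^1_{R_{(0)}}(M_{(0)}, R_{(0)}) = 0$, showing that $(0) \notin \textup{Supp}(N)$ and thus $\dim N \leq n-1$.

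For part (2), dualizing the minimal presentation of $N$ gives $0 \to N^* \to G_0^* \xrightarrow{\psi^*} G_1^* \to M \to 0$. Since $\textup{Ass}(R) = \{(0)\}$ and $(0) \notin \textup{Supp}(N)$, we have $N^* = 0$, so $\psi^*$ is injective; minimality of $\psi$ passes to $\psi^*$, so the resulting sequence is the minimal free resolution of $M$. By Nakayama $M \neq 0$, and $M$ cannot be free because otherwise the minimal resolution would have trivial kernel, contradicting $G_0^* \neq 0$. To rule out a free summand I would argue contrapositively: an $R(-k)$ summand of $M$ would appear as a split summand of $G_1^*$ with no corresponding summand in $G_0^*$, and dualizing back yields a rank-one summand of $G_1$ on which $\psi$ vanishes, contradicting the minimality of the original presentation of $N$.

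Part (3) is then formal: since $\dim N \leq n-1$ rules out a free summand of $N$, Proposition 4.3(5) gives $\Tr(\Tr(N)) = N$; combined with parts (1) and (2) this shows $M \mapsto \Tr(M)$ and $N \mapsto \Tr(N)$ are mutually inverse between the two stated classes. Part (4) is then graded bookkeeping: writing $F_0 = \bigoplus_j R(-j)^{\beta_{0,j}(M)}$ and $F_1 = \bigoplus_j R(-j)^{\beta_{1,j}(M)}$, the minimal resolution of $N$ built in part (1) satisfies $G_0 = F_1^*$ and $G_1 = F_0^*$, which flips the sign of every graded shift; translating to series form yields $\beta_0(M)(t) = \beta_1(N)(t^{-1})$ and $\beta_1(M)(t) = \beta_0(N)(t^{-1})$. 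The main obstacle I anticipate is the free-summand argument in part (2); everything else reduces to routine dualization and graded-shift tracking.
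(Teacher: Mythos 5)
Your overall plan tracks the paper's closely, and parts (2)–(4) are essentially the paper's argument with the same key moves (dualize the minimal presentation, note $N^*=0$ to get injectivity of $\psi^*$, rule out free summands by spotting a basis vector of $G_1^*$ killed by $\psi$, then invoke $\Tr\circ\Tr=\mathrm{id}$ for the bijection). The localization-at-$(0)$ justification for $\dim N \le n-1$ is a fine way to supply a step the paper merely asserts.

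There is, however, a genuine gap in part (1). You claim that because $\phi^*$ has entries in $\mathfrak m$, the sequence $F_0^* \xrightarrow{\phi^*} F_1^* \to N \to 0$ is ``automatically a minimal presentation.'' Entries in $\mathfrak m$ only give you half of minimality: it shows $F_1^*\to N$ is a minimal cover (so $\operatorname{rank} F_1^*=\beta_0(N)$). It does \emph{not} show that $F_0^*$ minimally covers $\operatorname{Syz}_1(N)=\operatorname{im}(\phi^*)$, i.e., that $\operatorname{rank} F_0^*=\beta_1(N)$. The latter is equivalent to $\ker(\phi^*)=M^*$ lying in $\mathfrak m F_0^*$, and this can fail: the graded map $\phi=(x\ x)\colon R(-1)^2\to R$ has all entries in $\mathfrak m$, yet $F_0^*\to F_1^*\to \operatorname{coker}\phi^*\to 0$ is not minimal. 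What saves the day is precisely the no-free-summand hypothesis on $M$, which you never invoke in part (1): if $M^*\not\subset\mathfrak m F_0^*$, you may choose a basis of $F_0^*$ with $\phi^*(e_i)=0$, and dualizing back exhibits $Re_i^*$ as a free summand of $M$, a contradiction. (This is the mirror image of the free-summand argument you do carry out in part (2).) Since part (4) uses $\operatorname{rank} F_0^*=\beta_1(N)$, this is not a cosmetic omission; you need to supply this argument for (1) to be complete.
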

\begin{proof}
\item (1) Suppose projdim$M$ = 1 with minimal resolution $0 \to F_1 \xrightarrow{\phi} F_0 \to M \to 0$. Then $N =$ Ext$^1_R(M,R)$, so dim$N$ $\leq$ $n-1$. Now consider the exact sequence $F_0^* \xrightarrow{\phi^*} F_1^* \to N \to 0$. Since $\phi$ has entries in $\mathfrak{m}$, so does $\phi^*$, so any free basis of $F_1^*$ maps to a minimal generating set of $N$. Now $F_0^*$ surjects onto Syz$_1(N)$; if the image of one basis element of $F_0^*$ is not a minimal generating set of the image of $\phi^*$, then some basis element $e_i$ of $F_0^*$ is mapped to $0$ under $\phi^*$. Then taking the dual again, $Re_i^*$ will become a free summand of $M$, contradicting with our assumption. So the image of the basis of $F_0^*$ is a minimal generating set of Syz$_1(N)$. This means that $F_0^* \to F_1^* \to N \to 0$ is a minimal presentation of $N$.

\item (2) Take a minimal presentation $G_1 \xrightarrow{\psi} G_0 \to N \to 0$. Let $M=\Tr(N)$, then $G_0^* \xrightarrow{\psi^*} G_1^* \to M \to 0$ is exact. By minimality $\psi$ has entries in $\mathfrak{m}$, hence so does $\psi^*$. Let $K$ = Quot($R$) the quotient field of $R$. Then $N \otimes K=0$, hence $\psi \otimes K$ is surjective and it is also a $K$-linear map where $K$ is a field. So $(\psi \otimes K)^*=\psi^* \otimes K$ is injective. This implies that Ker$(\psi^*)$ is torsion, but it is a submodule of $G_0^*$, so it must be 0. In other words, $\psi^*$ is injective and this means $0 \to G_0^* \to G_1^* \to M \to 0$ is a minimal free resolution of $M$. If $M$ has a free summand, it must be generated by the image of some basis elements of $G_1^*$. Pick one of these basis elements $e_i$ and expand it to a basis of $G_1^*$, then we know that the $e_i$-coefficient of all elements in $\psi(G_0^*)$ is 0. Taking dual again, we get $\psi(e_i^*)=0$, which means that $G_1$ is not mapped to Syz$_1(N)$ minimally. This is a contradiction. Hence $M$ has no free summands.
\item (3) Obvious by (1) and (2).
\item (4) Obvious by (1), (2) and (3).
\end{proof}
The above proposition reveals that the following assumption is essential in the analysis of Betti numbers. We will make this assumption frequently in the following propositions.
\begin{assumption}\label{assumptions-Mpd1,NtrM}
Assume $M$ is a finitely generated graded $R$-module without a free summand with $\textup{projdim}(M)=1$, and let $N=\Tr(M)$.    
\end{assumption}
The Betti table of the Auslander transpose of $M$ describes the local cohomology table of $M$ under Assumption \ref{assumptions-Mpd1,NtrM}.
\begin{proposition}\label{4.5}
Assume $M,N$ satisfies Assumption \ref{assumptions-Mpd1,NtrM}, then $E^{\bullet}(M)=(1-t)^{-n}(\sum_{i=2}^{n}(-1)^i\beta_i(N)$, $\sum_{i=0}^{n}(-1)^i\beta_i(N),0,\ldots,0)$.
\end{proposition}
\begin{proof}Let $0 \to F_1 \to F_0 \to M \to 0$ be the minimal resolution of $M$. This induces an exact sequence $0 \to M^* \to F_0^* \to F_1^* \to N \to 0$ where $F_0^* \to F_1^* \to N \to 0$ is a minimal representation. By definition, $E^{\bullet}(M)=(HS(M^*),HS(N),0,\ldots,0)$. Now $F_0^* \to F_1^* \to N \to 0$ is a minimal presentation, so $HS(F_1^*)=\beta_0(N)HS(R)=\beta_0(N)(1-t)^{-n}, HS(F_0^*)=\beta_1(N)HS(R)=\beta_1(N)(1-t)^{-n}$. Now $HS(N)=(\sum_{i=0}^{n}(-1)^i\beta_i(N))(1-t)^{-n}$ and by the long exact sequence, $HS(M^*)=HS(F_0^*)+HS(N)-HS(F_1^*)=(\sum_{i=2}^{n}(-1)^i\beta_i(N))(1-t)^{-n}$.
\end{proof}
\begin{corollary}\label{4.6}
Let $V$ be the space of Betti tables and Ext tables, $M$, $N$ be two modules satisfying Assumption \ref{assumptions-Mpd1,NtrM}. Define $L_1:V \to V$ to be the $\mathbb{Q}$-linear map $(\beta_0,\beta_1,\ldots,\beta_n) \to (1-t)^{-n}(\sum_{i=2}^{n}(-1)^i\beta_i,\sum_{i=0}^{n}(-1)^i\beta_i,0,\ldots,0)$, then $E^{\bullet}(M)=L_1(\beta^{\bullet}(N))$.
\end{corollary}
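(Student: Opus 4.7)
The statement is essentially a repackaging of Proposition \ref{4.5} in the language of linear maps, so the plan is short and has no serious obstacle. The work splits into two small checks.

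First, I would verify that the formula defining $L_1$ does specify a well-defined $\mathbb{Q}$-linear endomorphism of $V=\oplus_{i=0}^n \mathbb{Q}[[t]][t^{-1}]v_i$. The two coefficient functions $\beta \mapsto \sum_{i=2}^n(-1)^i\beta_i$ and $\beta \mapsto \sum_{i=0}^n(-1)^i\beta_i$ are visibly $\mathbb{Q}$-linear in the input vector, and multiplication by $(1-t)^{-n}$, which expands as $\sum_{k\geq 0}\binom{n+k-1}{k}t^k \in \mathbb{Q}[[t]]$, preserves the summand $\mathbb{Q}[[t]][t^{-1}]$. Thus $L_1$ is a well-defined $\mathbb{Q}$-linear self-map of $V$, and in fact its image lands in the Ext-table subspace of Proposition 2.3(2), since the output has zero entries in positions $\geq 2$ and the positions $0,1$ each lie in $\mathbb{Q}[t][t^{-1}](1-t)^{-n}v_i$.

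Second, I would simply substitute $(\beta_0(N),\ldots,\beta_n(N))$ into the definition of $L_1$:
$$L_1(\beta^{\bullet}(N)) = (1-t)^{-n}\Bigl(\sum_{i=2}^n(-1)^i\beta_i(N),\ \sum_{i=0}^n(-1)^i\beta_i(N),\ 0,\ldots,0\Bigr),$$
and note that the right-hand side coincides term by term with the expression for $E^{\bullet}(M)$ proved in Proposition \ref{4.5}.

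The genuine content of the corollary is Proposition \ref{4.5}; the only new observation is that the formula there is natural in $N$ through its Betti table, so it factors through the fixed $\mathbb{Q}$-linear map $L_1$. The value of this repackaging is that, composed with the duality $L_0$ of Proposition 2.4 and with the bijection $\Tr$ of Proposition \ref{4.4}(3), it lets us transport the Boij--S\"oderberg decomposition of $\beta^{\bullet}(N)$ from Theorem \ref{1.1} into a decomposition of $H^{\bullet}(M)$, which is presumably the use to be made of it in the remainder of Section 4.
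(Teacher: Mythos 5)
Your proposal is correct and matches the paper's (implicit) treatment: the paper gives no proof of Corollary \ref{4.6}, regarding it as an immediate restatement of Proposition \ref{4.5} in terms of the fixed $\mathbb{Q}$-linear map $L_1$, which is exactly the observation you make. Your brief check that $L_1$ is well defined on $V$ is a reasonable bit of extra diligence but does not change the substance.
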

\begin{corollary}\label{4.7}
Let $C_{wf}$ be the cone in $V$ generated by the Ext-tables of modules of projective dimension $1$ which does not have a free summand. Then if $E^{\bullet}(M)$ is an extremal ray and $N=\Tr(M)$, then $N$ has a pure resolution of length at least $1$, and every element in $C_{wf}$ is a positive linear combination of elements of the form $E^{\bullet}(M)$, where $\Tr(M)$ has a pure resolution of length at least $1$.
\end{corollary}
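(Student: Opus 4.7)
The plan is to identify $C_{wf}$ as the image $L_1(C')$ of an auxiliary cone $C'$ under the linear map of Corollary \ref{4.6}, and then transfer the Boij-S\"oderberg decomposition through $L_1$ via Lemma \ref{2.1}. By Proposition \ref{4.4}(3), the assignment $M \mapsto N = \Tr(M)$ is a bijection between finitely generated graded modules of projective dimension $1$ without a free summand and nonzero finitely generated graded modules of dimension at most $n-1$; under this bijection Corollary \ref{4.6} gives $E^\bullet(M) = L_1(\beta^\bullet(N))$. Consequently, taking $C'$ to be the cone generated by $\beta^\bullet(N)$ with $N$ nonzero and $\dim N \leq n-1$, we have $C_{wf} = L_1(C')$, and by linearity $C_{wf}$ is generated by the $L_1$-images of any generating set of $C'$.

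The main step is a refinement of Theorem \ref{1.1}, namely that $C'$ is already generated by the Betti tables of modules admitting a pure resolution of \emph{length at least} $1$. Starting from $\beta^\bullet(N) \in C'$, Theorem \ref{1.1} yields a positive combination $\beta^\bullet(N) = \sum_i r_i \beta^\bullet(N_i)$ with each $N_i$ having a pure resolution. To rule out length-$0$ contributions, I would use the rank functional $\beta \mapsto \sum_i (-1)^i \beta_i(1)$, which is additive under positive combinations and returns the rank of the module on an actual Betti table. A length-$0$ pure table $(c\,t^a, 0, \ldots, 0)$ represents a free module and contributes strictly positive rank $c$, whereas a short direct computation with the standard Betti-number formulas for a pure resolution of length $\geq 1$ shows that the alternating sum is identically $0$. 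Since $\dim N \leq n-1$ forces $\mathrm{rank}(N) = 0$, no length-$0$ pure table can appear with positive coefficient, so every $N_i$ in the decomposition admits a pure resolution of length $\geq 1$.

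Granted this refinement, Lemma \ref{2.1} applied to $L_1$ finishes the argument. The image under $L_1$ of each length-$\geq 1$ pure table $\beta^\bullet(N_i)$ is $E^\bullet(M_i)$ with $M_i = \Tr(N_i)$; by Proposition \ref{4.4} the module $M_i$ has projective dimension $1$ without a free summand, and $\Tr(M_i) = N_i$ has a pure resolution of length $\geq 1$. The first assertion of Lemma \ref{2.1} then gives the positive decomposition of every element of $C_{wf}$ in the claimed form, while the second assertion places every vertex of $C_{wf}$ inside $L_1$ applied to the vertex set of $C'$, yielding the extremal-ray characterization. The only nontrivial ingredient is the rank argument of the middle paragraph, since a priori a Boij-S\"oderberg decomposition need not respect the dimension constraint $\dim N \leq n-1$; everything else is a formal application of the functorial setup already built in Sections 2 and 4.
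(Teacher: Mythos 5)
Your proof follows the same route as the paper: identify $C_{wf} = L_1(C_b)$ via Proposition \ref{4.4}(3) and Corollary \ref{4.6}, then push the Boij--S\"oderberg decomposition through $L_1$ using Lemma \ref{2.1}. The rank-functional argument you insert to rule out length-$0$ pure tables is a correct elaboration of a step the paper simply asserts (that the Betti tables of modules of dimension $\leq n-1$ decompose into pure tables of length between $1$ and $n$), so the substance is identical.
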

\begin{proof}Let $C_b$ be the cone generated by all Betti tables of modules of dimension at most $n-1$. Then by Proposition \ref{4.4} (3) and Corollary \ref{4.6}, $C_{wf}=L_1(C_b)$. Applying the Boij-S\"oderberg theory for Betti tables we know that the extremal rays of $C_b$ are the Betti tables of modules with pure resolutions of length $s$, where $1 \leq s \leq n$ and $C_b$ is generated by these elements as a cone. Now apply Lemma \ref{2.1}.
\end{proof}
By the proposition above, for $M,N$ satisfying Assumption \ref{assumptions-Mpd1,NtrM}, we already know how to decompose $E^{\bullet}(M)$ when $N$ does not have a pure resolution, so to find the vertices of the cone of Ext-tables it suffices to analyze when $L_1(\beta^{\bullet}(N))$ is decomposable, where $N$ has a pure resolution. By Corollary \ref{4.7} we may always assume the length of the resolution $s \geq 1$, and $\dim(N) \leq n-1$. Now let $N$ be a finitely generated graded module of dimension at most $n-1$. We will say $N$ is pure of type $\textbf{d}$ for a degree sequence $\textbf{d}$ if the minimal free resolution of $N$ is pure of type $\textbf{d}$. First, we need two lemmas that allow us to compute $\beta^{\bullet}(N)$ when $N$ is pure in terms of its degree sequence $\textbf{d}=(d_0,\ldots,d_s)$, and in this case $s \geq 1$. 
\begin{lemma}Let $s_1 \leq s$ be two nonnegative integers, $\textbf{d}=(d_0,\ldots,d_s)$ be a degree sequence, and $V_{\textbf{d},s_1}$ be the vector space $\{f \in \mathbb{Q}[t,t^{-1}] \textup{such that} f=\sum_{i=0}^{s}\pi_{\textbf{d},d_i}t^{d_i},(1-t)^{s_1}|f\}$. Then $\dim_{\mathbb{Q}}V_{\textbf{d},s_1}=s-s_1+1$.
\end{lemma}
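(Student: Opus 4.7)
The plan is to realize $V_{\mathbf{d},s_1}$ as the kernel of an explicit linear map and compute its rank by a Vandermonde argument. The ambient space $W_{\mathbf{d}} = \{f = \sum_{i=0}^s c_i t^{d_i} : c_i \in \mathbb{Q}\}$ has dimension $s+1$ because the monomials $t^{d_i}$ are distinct.

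Next I would translate the divisibility condition into a system of linear conditions on the coefficients. For a Laurent polynomial $f \in \mathbb{Q}[t,t^{-1}]$, divisibility by $(1-t)^{s_1}$ is equivalent to $f(1) = f'(1) = \cdots = f^{(s_1-1)}(1) = 0$. Indeed, writing $f = t^{-N}g$ with $g \in \mathbb{Q}[t]$, the element $t$ is a unit in $\mathbb{Q}[t,t^{-1}]$ coprime to $1-t$, so $(1-t)^{s_1} \mid f$ in the Laurent ring if and only if $(1-t)^{s_1} \mid g$ in $\mathbb{Q}[t]$, which is exactly the vanishing of $g$ and its first $s_1-1$ derivatives at $t=1$. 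Hence $V_{\mathbf{d},s_1} = \ker \Phi$, where $\Phi : W_{\mathbf{d}} \to \mathbb{Q}^{s_1}$ is the evaluation map $\Phi(f) = (f(1), f'(1), \ldots, f^{(s_1-1)}(1))$.

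The main step is to prove that $\Phi$ is surjective; together with rank-nullity this yields $\dim V_{\mathbf{d},s_1} = (s+1) - s_1 = s - s_1 + 1$. In the basis $\{t^{d_i}\}$, the matrix of $\Phi$ has entry $d_j(d_j - 1)\cdots(d_j - \ell + 1)$ in row $\ell$ and column $j$, i.e.\ the $\ell$-th falling factorial $d_j^{(\ell)}$. Since $s_1 \leq s$, I can extract the $s_1 \times s_1$ submatrix formed by the first $s_1$ columns and show its determinant is nonzero. The falling factorial $x^{(\ell)} = x(x-1)\cdots(x-\ell+1)$ is a monic polynomial of degree $\ell$, so the change of basis from $(1, x, \ldots, x^{s_1-1})$ to $(1, x^{(1)}, \ldots, x^{(s_1-1)})$ in polynomials of degree less than $s_1$ is upper triangular with $1$'s on the diagonal. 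Consequently the determinant of the chosen submatrix equals the Vandermonde determinant $\prod_{0 \leq a < b \leq s_1 - 1}(d_b - d_a)$, which is nonzero since a degree sequence is strictly increasing. The only subtle point throughout is the Laurent-to-polynomial reduction in the previous paragraph, and this is routine because $t$ is invertible and coprime to $1-t$.
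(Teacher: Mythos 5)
Your proof is correct and follows essentially the same route as the paper: reduce to a vanishing-of-derivatives condition at $t=1$, pass from the resulting falling-factorial (or binomial-coefficient) matrix to a monomial matrix by an invertible (unipotent) change of basis, and conclude full rank from the Vandermonde determinant. The only cosmetic differences are that you handle the Laurent issue via coprimality of $t$ and $1-t$ rather than multiplying by $t^{-d_0}$, and you phrase the conclusion as rank-nullity for the explicit evaluation map $\Phi$; both are equivalent to what the paper does.
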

\begin{proof}Multiplying by $t^{-d_0}$ does not affect the order of the pole at $t=1$, so we may assume $d_0=0$ without loss of generality. In this case, every element in $V_{\textbf{d},s_1}$ will be a polynomial. For a polynomial $f$, $(1-t)^{s_1}$ divides $f$ if and only if
$$d^jf/dt^j(1)=0, \forall j=0,1,\ldots,s_1-1.$$
Assume $f=\sum_{i=0}^{s}\pi_{\textbf{d},d_i}t^{d_i}$, then we have
$$\sum_{i=0}^{s}\pi_{\textbf{d},d_i} {{d_i}\choose{j}} = 0, \forall j=0,1,\ldots,s_1-1.$$
We can write the above equation as $Ax=0$ where $A=({{d_i}\choose{j}})_{0 \leq i \leq s, 0 \leq j \leq s_1-1}$ is a matrix and $x$ is a column vector. Let $B=(d^j_i)_{0 \leq i \leq s, 0 \leq j \leq s_1-1}$ which is a matrix of the same size as $A$. It is easy to see that we can apply elementary row operation to $A$ and get $B$ since we can expand ${{d_i}\choose{j}}$ into a polynomial of degree $j$ and clear the lower degree terms of each row using the other rows, thus $Ax=0$ and $Bx=0$ has the same solution space. The matrix $B$ has full rank $s_1$ because it has a Vandermonde submatrix of rank $s_1$ and we have $s+1$ variables, so the dimension of the solution space is $s-s_1+1$.
\end{proof}
In the case where $s=s_1$, dim$_{\mathbb{Q}}V_{\textbf{d},s} = 1$, so there is a unique vector up to a scalar. Denote the sign function by $sgn$, then this vector has the alternating sign property, described as below.
\begin{lemma}\label{4.9}
\item (1) For each degree sequence $\textbf{d}=(d_0,\ldots,d_s)$, \textup{dim}$_{\mathbb{Q}}V_{\textbf{d},s}=1$, hence there exists a unique polynomial $\pi_{\textbf{d}}(t) \in \mathbb{Q}[t,t^{-1}]$ up to multiplying by a nonzero rational number inside $V_{\textbf{d},s}$, denoted by $\pi_{\textbf{d}}(t)=\sum_{i=0}^{s}\pi_{\textbf{d},d_i}t^{d_i}$.
\item (2) If we rescale these coefficients so that $\pi_{\textbf{d},d_0}=1$, then $\pi_{\textbf{d},d_i}=\frac{\Pi_{j \neq 0}(d_j-d_0)}{\Pi_{j \neq i}(d_j-d_i)}$, and $sgn(\pi_{\textbf{d},d_i})=(-1)^i$, that is, the coefficients are nonzero and have alternating signs.
\item (3) Under the assumption in (2), $\pi_{\textbf{d}}(t)/(1-t)^s|_{t=1} > 0$.
\item (4) Up to multiplying by a scalar, $\beta^{\bullet}(N)=((-1)^i\pi_{\textbf{d},d_i}t^{d_i})$ if $N$ is pure of type $\textbf{d}$.
\end{lemma}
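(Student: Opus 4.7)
The plan is to handle the four parts in order, using Lagrange-interpolation / partial-fraction identities as the common tool. Part (1) is immediate: setting $s_1 = s$ in the preceding lemma gives $\dim_{\mathbb{Q}} V_{\textbf{d}, s} = 1$, so a nonzero $\pi_{\textbf{d}}$ exists and is unique up to scalar.

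For part (2), I will exhibit an explicit nonzero element of $V_{\textbf{d}, s}$ via partial fractions. Setting $p(x) = \prod_{j=0}^{s}(x - d_j)$, expansion of $x^j/p(x)$ at $x = \infty$ for $0 \leq j \leq s-1$ gives the residue identity
$$\sum_{i=0}^{s} \frac{d_i^j}{p'(d_i)} = 0, \qquad 0 \leq j \leq s-1,$$
which is precisely the linear system derived in the proof of the previous lemma. Hence the coefficients $c_i := 1/p'(d_i) = 1/\prod_{j \neq i}(d_i - d_j)$ define a nonzero element $\sum_i c_i t^{d_i}$ of $V_{\textbf{d}, s}$. Rescaling so that the $d_0$-coefficient equals $1$ amounts to multiplying by $\prod_{j \neq 0}(d_0 - d_j)$; the $s$ sign changes in numerator and denominator cancel and yield the advertised formula. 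The sign claim is then visual: the numerator $\prod_{j \neq 0}(d_j - d_0)$ is positive, while $\prod_{j \neq i}(d_j - d_i)$ has exactly $i$ negative factors (those with $j < i$), so $sgn(\pi_{\textbf{d}, d_i}) = (-1)^i$.

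For part (3), I first observe that $\pi_{\textbf{d}}$ has a zero of order exactly $s$ at $t = 1$ (otherwise it would lie in $V_{\textbf{d}, s+1}$, which has dimension $s-(s+1)+1=0$). A Taylor expansion at $t = 1$ then gives
$$\left.\frac{\pi_{\textbf{d}}(t)}{(1-t)^s}\right|_{t=1} = (-1)^s \frac{\pi_{\textbf{d}}^{(s)}(1)}{s!}.$$
Using the unnormalized coefficients $c_i$, the companion residue identity $\sum_i d_i^s/p'(d_i) = 1$, together with the expansion of the falling factorial $d_i(d_i-1)\cdots(d_i-s+1) = d_i^s + (\text{lower degree in } d_i)$ and the vanishing moment identities from (2), collapses the $s$-th derivative at $1$ to the value $1$. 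Rescaling back to the normalization from (2) multiplies by $(-1)^s \prod_{j \neq 0}(d_j - d_0)$, producing the positive number $\prod_{j \neq 0}(d_j - d_0)/s!$.

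For part (4), let $0 \to R(-d_s)^{b_s} \to \cdots \to R(-d_0)^{b_0} \to N \to 0$ be the minimal pure resolution, so in series form $\beta_i(N) = b_i t^{d_i}$. The Euler characteristic of the resolution gives
$$(1-t)^n HS(N) = \sum_{i=0}^{s} (-1)^i b_i t^{d_i}.$$
A pure module of projective dimension $s$ over $R$ is Cohen-Macaulay of codimension $s$, so $HS(N)$ has a pole of order exactly $n-s$ at $t=1$; hence the right-hand side is a polynomial divisible by $(1-t)^s$ but not $(1-t)^{s+1}$, placing it in $V_{\textbf{d}, s}$. Uniqueness from (1) then gives $(-1)^i b_i = \lambda\, \pi_{\textbf{d}, d_i}$ for some nonzero $\lambda \in \mathbb{Q}$, and positivity of the $b_i$ combined with the alternating signs from (2) forces $\lambda > 0$. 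The main bookkeeping hurdle throughout is tracking the sign rescaling between the natural partial-fraction normalization $c_i = 1/p'(d_i)$ and the statement's normalization $\pi_{\textbf{d}, d_0} = 1$; once this is systematized, the rest is standard.
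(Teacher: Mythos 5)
The paper offers no proof here---it cites the opening of Section 2.1 of Boij--S\"oderberg's 2008 paper---so you have reconstructed an argument rather than paralleled one. Your reconstruction is the standard Lagrange-interpolation / partial-fraction route underlying the Herzog--K\"uhl equations, which is in fact what the cited source does, and the mechanics check out: the residue identities $\sum_i d_i^j/p'(d_i) = 0$ for $j < s$ and $= 1$ for $j = s$ follow from expanding $x^j/p(x)$ at infinity, the $(-1)^s$ sign cancellation in the rescaling to $\pi_{\textbf{d},d_0}=1$ is correct, the count of negative factors in $\prod_{j\neq i}(d_j - d_i)$ gives $(-1)^i$, and the Taylor-at-$1$ computation for part (3) together with $\sum_i d_i^s/p'(d_i)=1$ produces the positive value $\prod_{j\neq 0}(d_j-d_0)/s!$.

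One assertion in part (4) is wrong as stated: a module with a pure resolution of length $s$ need not be Cohen--Macaulay of codimension $s$. Over $R=k[x_1,x_2,x_3]$ the module $R/(x_1) \oplus R/(x_1,x_2,x_3)$ has a pure minimal free resolution with degree sequence $(0,1,2,3)$, is not Cohen--Macaulay (depth $0$, dimension $2$), and has Betti vector $(2,4,3,1)$, which is not proportional to the vector $(1,3,3,1)$ given by $((-1)^i\pi_{\textbf{d},d_i})$. The Cohen--Macaulay condition is a standing hypothesis in Boij--S\"oderberg theory, not a consequence of purity; it does hold in the paper's application, since the extremal rays of the cone $C_b$ in Corollary \ref{4.7} are Betti tables of Cohen--Macaulay modules with pure resolutions, but in the proof you should take it as a hypothesis on $N$ rather than deduce it from purity. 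With that hypothesis in place, the rest of part (4)---the Euler-characteristic identity, uniqueness inside the one-dimensional space $V_{\textbf{d},s}$, and positivity of the scalar via the alternating signs---is correct.
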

The proof is shown in the beginning of Section 2.1 of \cite{boij2008graded} up to Definition 2.3. For example, we have $\pi_{0,1,2,3}(t)=1-3t+3t^2-t^3$, and $\pi_{0,1,3,4}(t)=1-2t+2t^3-t^4$.

We need to introduce more notions to analyze when $L_1(\beta^\bullet(N))$, where $N$ is pure, can decompose nontrivially. We define another invertible linear map
$L_2:V \to V$, $L_2(f_0,f_1,f_2,\ldots,f_n)=(1-t)^n(f_1-f_0,f_0,f_2,\ldots,f_n)$.
Since $L_2$ is an isomorphism of $\mathbb{Q}$-vector spaces, it induces a bijection between the vertex set of a cone with the vertex set of the image of the cone by Lemma \ref{2.1}. Under this notation, $L_2L_1(\beta^{\bullet}(N))=(\beta_0(N)-\beta_1(N),\sum_{i=2}^{n}(-1)^i\beta_i(N),0,\ldots,0)$. In other words, the element $L_2L_1(\beta^{\bullet}(N))$ has two nonzero entries; the first entry is the sum of two terms of lowest degree in $\pi_{\textbf{d}}(t)$ and the second entry is the sum of the $n-1$ terms of highest degree in $\pi_{\textbf{d}}(t)$. Hence it is natural to introduce the following notation for a degree sequence $\textbf{d}$: let $\alpha_{\textbf{d}}(t)=\pi_{\textbf{d},d_0}t^{d_0}+\pi_{\textbf{d},d_1}t^{d_1}$ and $\alpha_{\textbf{d}}'(t)=\sum_{i=2}^{s}\pi_{\textbf{d},d_i}t^{d_i}=\pi_{\textbf{d}}(t)-\alpha_{\textbf{d}}(t)$. More generally, for an integer $d$, define $\tau_{\leq d}$ to be a map that sends a Laurent polynomial $f \in \mathbb{Q}[t,t^{-1}]$ to the sum of terms of $f$ of degree less than $d$ and $\tau_{\geq d}$, that sends a Laurent polynomial $f \in \mathbb{Q}[t,t^{-1}]$ to the sum of terms of $f$ of degree at least $d$. It is easy to see that for a degree sequence $\textbf{d}=(d_0<d_1<\ldots<d_s)$, $\alpha_{\textbf{d}}(t)=\tau_{\leq d_1}\pi_{\textbf{d}}(t)$ and $\alpha_{\textbf{d}}'(t)=\tau_{\geq d_2}\pi_{\textbf{d}}(t)$. We have:
\begin{proposition}\label{4.10} 
Assume $N$ is pure of type $\textbf{d}$. Then $L_2L_1(\beta^{\bullet}(N))=(\alpha_{\textbf{d}}(t),\alpha_{\textbf{d}}'(t))$.
\end{proposition}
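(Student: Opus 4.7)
The proof is a direct unwinding of the definitions, combined with Lemma \ref{4.9}(4), so the main work is to make sure the signs line up and that the two $(1-t)$-powers cancel.

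First, I would substitute the explicit formula for $\beta^{\bullet}(N)$ from Lemma \ref{4.9}(4). Since $N$ is pure of type $\textbf{d}=(d_0,\ldots,d_s)$, we may write $\beta_i(N) = (-1)^i \pi_{\textbf{d},d_i} t^{d_i}$ for $0 \le i \le s$ and $\beta_i(N) = 0$ for $i > s$. The two alternating sums appearing in the definition of $L_1$ then simplify because the sign $(-1)^i$ multiplied by $(-1)^i$ becomes $+1$: concretely,
\begin{equation*}
\sum_{i=0}^{n}(-1)^i \beta_i(N) = \sum_{i=0}^{s} \pi_{\textbf{d},d_i} t^{d_i} = \pi_{\textbf{d}}(t),
\qquad
\sum_{i=2}^{n}(-1)^i \beta_i(N) = \sum_{i=2}^{s} \pi_{\textbf{d},d_i} t^{d_i} = \alpha'_{\textbf{d}}(t).
\end{equation*}
So by Corollary \ref{4.6},
\begin{equation*}
L_1(\beta^{\bullet}(N)) = (1-t)^{-n}\bigl(\alpha'_{\textbf{d}}(t),\,\pi_{\textbf{d}}(t),\,0,\ldots,0\bigr).
\end{equation*}

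Next I would apply $L_2$. By definition $L_2(f_0,f_1,f_2,\ldots,f_n) = (1-t)^n(f_1-f_0,\,f_0,\,f_2,\ldots,f_n)$, and the prefactor $(1-t)^n$ precisely cancels the $(1-t)^{-n}$ coming from $L_1$. The first slot becomes $\pi_{\textbf{d}}(t) - \alpha'_{\textbf{d}}(t)$, which equals $\alpha_{\textbf{d}}(t)$ by the definition $\alpha_{\textbf{d}} = \pi_{\textbf{d}} - \alpha'_{\textbf{d}}$; the second slot becomes $\alpha'_{\textbf{d}}(t)$; and the remaining slots are $0$. This gives exactly $(\alpha_{\textbf{d}}(t),\,\alpha'_{\textbf{d}}(t),\,0,\ldots,0)$, which is what the proposition claims (the trailing zeros are suppressed in the statement).

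There is essentially no obstacle here: once one writes down $\beta^{\bullet}(N)$ in the pure form from Lemma \ref{4.9}, the only thing to check is the bookkeeping of signs and of $(1-t)^{\pm n}$. The only conceptual point worth flagging is that the $(1-t)^{-n}$ inside $L_1(\beta^{\bullet}(N))$ is a genuine element of $\mathbb{Q}[[t]][t^{-1}]$, so the cancellation with the $(1-t)^n$ from $L_2$ is legitimate inside the ambient space $V$.
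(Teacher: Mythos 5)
Your proof is correct and follows essentially the same route the paper intends (the paper treats this as an immediate computation and does not spell it out): you substitute the pure Betti table from Lemma \ref{4.9}(4), note that the $(-1)^i$ signs cancel so the alternating sums become $\pi_{\textbf{d}}(t)$ and $\alpha'_{\textbf{d}}(t)$, and then observe that applying $L_2$ cancels the $(1-t)^{\pm n}$ factors and produces $(\alpha_{\textbf{d}},\alpha'_{\textbf{d}},0,\ldots,0)$. Your parenthetical remark about the suppressed trailing zeros and the legitimacy of cancelling $(1-t)^{\pm n}$ inside $V$ is a reasonable bit of care, but there is nothing further to add.
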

We need to check whether $L_2L_1(\beta^{\bullet}(N))$ can be decomposed for various $\textbf{d}$'s. The next proposition shows that if there is a space between $d_i$ and $d_{i+1}$ for $i \neq 1$, then we can decompose $L_2L_1(\beta^{\bullet}(N))$.
\begin{proposition}\label{4.11}
Let $s \geq 1$ be a positive integer. Let $\textbf{d}=(d_0 < d_1 <\ldots< d_s)$ be a degree sequence. Assume $d_{i}<d_{i+1}-1$ for some $i$ and pick an integer $a$ such that $d_i < a < d_{i+1}$. Define two degree sequences $\textbf{d'}=(d_0 < d_1 <\ldots<d_i <a <d_{i+2}< d_s)$ and $\textbf{d''}=(d_0 < d_1 <\ldots<d_{i-1} <a <d_{i+1}< d_s)$. Then $\pi_{\textbf{d}}(t)=c_1\pi_{\textbf{d'}}(t)+c_2\pi_{\textbf{d''}}(t)$ where $c_1,c_2>0$. Moreover, if $i \neq 1$, then we also have $\alpha_{\textbf{d}}(t)=c_1\alpha_{\textbf{d'}}(t)+c_2\alpha_{\textbf{d''}}(t)$ and $\alpha_{\textbf{d}}'(t)=c_1\alpha_{\textbf{d'}}'(t)+c_2\alpha_{\textbf{d''}}'(t)$, so in particular, let $N$ be a pure module of type $\textbf{d}$, then $L_2L_1(\beta^{\bullet}(N))=c'_1L_2L_1(\beta^{\bullet}(N'))+c'_2L_2L_1(\beta^{\bullet}(N''))$ where $N'$ is pure of type $\textbf{d'}$, $N''$ is pure of type $\textbf{d''}$, and $c'_1, c'_2>0$ are elements in $\mathbb{Q}$.
\end{proposition}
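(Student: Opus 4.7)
My plan is to establish the polynomial identity $\pi_{\textbf{d}} = c_1 \pi_{\textbf{d'}} + c_2 \pi_{\textbf{d''}}$ by a two-dimensional count, pin down positivity via the alternating sign property, and then apply truncations to transfer the identity to the $\alpha$-parts when $i \neq 1$. First I would normalize each $\pi$ so that its position-$0$ coefficient equals $1$, as in Lemma \ref{4.9}(2). Let $V$ be the $\mathbb{Q}$-subspace of $\mathbb{Q}[t, t^{-1}]$ consisting of Laurent polynomials supported on the $(s+2)$-element set $\{d_0, \ldots, d_s, a\}$ and divisible by $(1-t)^s$; the preceding dimension lemma gives $\dim V = 2$. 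All three polynomials $\pi_{\textbf{d}}, \pi_{\textbf{d'}}, \pi_{\textbf{d''}}$ lie in $V$, and $\pi_{\textbf{d'}}, \pi_{\textbf{d''}}$ are linearly independent because their supports differ: the coefficient of $t^{d_{i+1}}$ vanishes in $\pi_{\textbf{d'}}$ but is nonzero in $\pi_{\textbf{d''}}$ by Lemma \ref{4.9}(2). Hence there exist unique $c_1, c_2 \in \mathbb{Q}$ with $\pi_{\textbf{d}} = c_1 \pi_{\textbf{d'}} + c_2 \pi_{\textbf{d''}}$.

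For positivity, I would compare coefficients of $t^a$ on both sides: the left side contributes $0$, while on the right $a$ sits at position $i+1$ of $\textbf{d'}$ and position $i$ of $\textbf{d''}$, so by Lemma \ref{4.9}(2) these coefficients have signs $(-1)^{i+1}$ and $(-1)^i$, opposite to each other. The equation $c_1 \pi_{\textbf{d'}, a} + c_2 \pi_{\textbf{d''}, a} = 0$ thus forces $c_1$ and $c_2$ to share a common sign. To rule out the negative case, I would invoke Lemma \ref{4.9}(3): dividing the identity by $(1-t)^s$ and evaluating at $t = 1$ gives $\pi_{\textbf{d}}(t)/(1-t)^s|_{t=1} = c_1 \pi_{\textbf{d'}}(t)/(1-t)^s|_{t=1} + c_2 \pi_{\textbf{d''}}(t)/(1-t)^s|_{t=1}$ with all three evaluations strictly positive, so $c_1, c_2 > 0$.

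When $i \neq 1$, I would apply the truncations $\tau_{\leq d_1}$ and $\tau_{\geq d_2}$ to the polynomial identity. The main verification is that $\tau_{\leq d_1} \pi_{\textbf{d'}} = \alpha_{\textbf{d'}}$ and $\tau_{\geq d_2} \pi_{\textbf{d'}} = \alpha'_{\textbf{d'}}$, and likewise for $\textbf{d''}$. Since $a \in (d_i, d_{i+1})$ with $i \neq 1$, either $i = 0$ with $a < d_1$, or $i \geq 2$ with $a > d_2$; in both subcases the first two positions of each perturbed sequence have values $\leq d_1$ and all later positions have values $\geq d_2$, so $\tau_{\leq d_1}$ and $\tau_{\geq d_2}$ pick out exactly the first two and the remaining terms of the corresponding $\pi$, which by definition are $\alpha$ and $\alpha'$. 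This is precisely what fails for $i = 1$, since then $a \in (d_1, d_2)$ would occupy position $1$ of $\textbf{d''}$ while strictly exceeding $d_1$. Combining the $\alpha$-identities with Proposition \ref{4.10}, and noting that $L_2 L_1 \beta^{\bullet}(N) = \lambda_N (\alpha_{\textbf{d}}, \alpha'_{\textbf{d}}, 0, \ldots, 0)$ for a positive scalar $\lambda_N$ coming from the Betti-table normalization (and similarly for $N', N''$), the decomposition transfers to the $L_2 L_1 \beta^{\bullet}$ vectors with positive rational coefficients $c'_1, c'_2$. The argument is essentially linear-algebraic bookkeeping; the only mildly delicate parts are the parity analysis at $t = a$ that controls the common sign of $c_1, c_2$, and the case split $i = 0$ versus $i \geq 2$ in the truncation step.
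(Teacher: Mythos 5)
Your proof is correct and follows the same overall architecture as the paper's: realize $\pi_{\textbf{d}}$ as a linear combination of $\pi_{\textbf{d'}}$ and $\pi_{\textbf{d''}}$, pin down the signs of $c_1,c_2$, then truncate to get the $\alpha$-identities. The two points where you diverge are (i) how the combination is produced and (ii) how positivity is established, and both changes buy you a small simplification. The paper constructs $c_1\pi_{\textbf{d'}}+c_2\pi_{\textbf{d''}}$ explicitly by cancelling the $t^a$-coefficient and then invokes uniqueness of $\pi_{\textbf{d}}$; it must handle $s=1$ separately by writing the three polynomials out. Your dimension count on $V_{\{d_0,\ldots,d_s,a\},s}$ (dimension $2$ by the preceding lemma) together with the support argument for linear independence gives $c_1,c_2$ uniformly, with no case split on $s$. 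For positivity, after the shared observation that cancelling at $t^a$ forces $sgn(c_1)=sgn(c_2)$, the paper argues via the sign of an extreme coefficient ($\pi_{\textbf{d},d_0}$ or $\pi_{\textbf{d},d_s}$), which requires $s\geq 2$ so that at least one of $d_0,d_s$ survives in all three sequences; you instead divide by $(1-t)^s$, evaluate at $t=1$, and invoke Lemma \ref{4.9}(3), which works for all $s\geq 1$ at once. The truncation step for $i\neq 1$ and the transfer to $L_2L_1\beta^{\bullet}$ via Proposition \ref{4.10} (with the positive normalizing scalar $\lambda_N$ made explicit, which is a bit more careful than the paper) are the same in substance. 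One tiny gap you could fill: the ``common sign'' step tacitly assumes $c_1,c_2\neq 0$, which follows since $\pi_{\textbf{d}}$ has zero coefficient at $t^a$ while $\pi_{\textbf{d'}},\pi_{\textbf{d''}}$ do not; the paper shares this omission, and it is routine.
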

\begin{proof}If $s=1$, then  $\pi_{\textbf{d}}(t)=t^{d_0}-t^{d_1}$, $\pi_{\textbf{d'}}(t)=t^{d_0}-t^a$, $\pi_{\textbf{d''}}(t)=t^a-t^{d_1}$, so $\pi_{\textbf{d}}(t)=\pi_{\textbf{d'}}(t)+\pi_{\textbf{d''}}(t)$. Now assume $s \geq 2$, then by Lemma \ref{4.9} (2) we know $\pi_{\textbf{d'}}(t)$ has nonzero coefficients at degree $d_j, j \neq i+1$ and degree $a$, and $\pi_{\textbf{d''}}(t)$ has nonzero coefficients at degree $d_j, j \neq i$ and degree $a$. So by cancelling the coefficients in degree $a$, there is a linear combination $c_1\pi_{\textbf{d'}}(t)+c_2\pi_{\textbf{d''}}(t)$ which is a polynomial with possible nonzero coefficients at degree $d_i, 0 \leq i \leq s$. Now $sgn(\pi_{\textbf{d'},a})=(-1)^{i+1} \neq sgn(\pi_{\textbf{d''},a})=(-1)^i$. Hence we have $sgn(c_1)=sgn(c_2)$. This polynomial is still divisible by $(1-t)^s$, so by Lemma \ref{4.9} (1), it is a multiple of $\pi_{\textbf{d}}(t)$, and after rescaling we may assume $\pi_{\textbf{d}}(t)=c_1\pi_{\textbf{d'}}(t)+c_2\pi_{\textbf{d''}}(t)$ and $sgn(c_1)=sgn(c_2)$. Now since $s \geq 2$, we have $i \geq 1$ or $i+1 \leq s-1$. In the first case, $sgn(\pi_{\textbf{d},d_0})=sgn(\pi_{\textbf{d'},d_0})=sgn(\pi_{\textbf{d''},d_0})=1$ and in the second case $sgn(\pi_{\textbf{d},d_s})=sgn(\pi_{\textbf{d'},d_s})=sgn(\pi_{\textbf{d''},d_s})=(-1)^s$, so $c_1,c_2 >0$. If $i \neq 1$, then either $i=0,i+1=1$ or $i \geq 2$. We can apply $\tau_{\leq d_1}$ to the equation $\pi_{\textbf{d}}(t)=c_1\pi_{\textbf{d'}}(t)+c_2\pi_{\textbf{d''}}(t)$ to get $\alpha_{\textbf{d}}(t)=c_1\alpha_{\textbf{d'}}(t)+c_2\alpha_{\textbf{d''}}(t)$ and apply $\tau_{\geq d_2}$ to get $\alpha_{\textbf{d}}'(t)=c_1\alpha_{\textbf{d'}}'(t)+c_2\alpha_{\textbf{d''}}'(t)$. The last statement is true for $c'_1=c_1$ and $c'_2=c_2$ by Proposition 4.10.
\end{proof}
\begin{definition}\label{property-P}
Let $\textbf{d}=(d_{0} < d_{1} <\ldots< d_{s})$ be a degree sequence. We say $\textbf{d}$ satisfies condition $\mathcal{P}$ if $d_1-d_0=1$ and $d_j=d_2+j-2$ for $2 \leq j \leq s$.      
\end{definition}
\begin{corollary}\label{4.12}
Let $\textbf{d}_0=(d_{0,0} < d_{0,1} <\ldots< d_{0,s})$ be a degree sequence. and $N$ be pure of type $\textbf{d}_0$, then there exist a collection of $N_i$'s which are pure of type $\textbf{d}_i$'s and those $\textbf{d}_i$'s satisfy $\mathcal{P}$, such that $L_2L_1(\beta^{\bullet}(N))$ decomposes into $L_2L_1(\beta^{\bullet}(N_i))$.
\end{corollary}
\begin{proof}We fix the degree sequence $\textbf{d}_0=(d_{0,0} < d_{0,1} <\ldots< d_{0,s})$. Let $A$ be the set of degree sequences $\{\textbf{d}=(d_0 < d_1 <\ldots< d_s) | d_{0,0} \leq d_0 \leq d_1 \leq d_{0,1}, d_{0,2} \leq d_2 \leq d_s \leq d_{0,s}\}$. Then $A$ is a finite set since $d_{0,0}, d_{0,1}, d_{0,2}, d_{0,s}$ are fixed. If $N$ is pure of type $\textbf{d}$ that does not satisfy $\mathcal{P}$, then $\textbf{d}$ satisfies the hypothesis of Proposition \ref{4.10} so $L_2L_1(\beta^{\bullet}(N))$ decomposes, and moreover, using the notation in Proposition \ref{4.11}, the two degree sequences $\textbf{d}',\textbf{d}''$ are still in $A$. Let $C'$ be the cone generated by $L_2L_1(\beta^{\bullet}(N))$, where $N$ is pure of type $\textbf{d} \in A$. Consider the set $B=\{L_2L_1(\beta^{\bullet}(N))| N$ pure of type $\textbf{d}$, $\textbf{d} \in A$ satisfies $\mathcal{P}\}$. Then $B$ contains the vertex set because every element in $C' \backslash B$ can be decomposed into elements in $C'$. We know $C'$ is finitely generated as a cone, so a vertex set of $C'$ also generates $C'$ by Theorem 1.26 of \cite{bruns2009polytopes}, hence $B$ generates $C'$, and $L_2L_1(\beta^{\bullet}(N)) \in C$. This means that $L_2L_1(\beta^{\bullet}(N))$ decomposes into elements in $B$, which proves the corollary.
\end{proof}
The next proposition shows that $L_2L_1(\beta^{\bullet}(N))$ is decomposable if $s \neq 1,n$. Note that we always assume $1 \leq s \leq n$.
\begin{proposition}\label{4.13}Let $2 \leq s \leq n-1$. Let $\textbf{d}=(d_0 < d_1 <\ldots< d_s)$ be a degree sequence. Construct two degree sequences $\textbf{d'}=(d_0 < d_1 <\ldots< d_{s-1}<d_s+1)$ and $\textbf{d''}=(d_0 < d_1 <\ldots< d_{s-1}<d_s<d_s+1)$. The first degree sequence has length $s+1$ and the second degree sequence has length $s+2$. Then $c_1\pi_{\textbf{d}}(t)+c_2\pi_{\textbf{d'}}(t)=\pi_{\textbf{d''}}(t)$ where $c_1>0$ and $c_2<0$. Moreover we also have $\alpha_{\textbf{d''}}(t)=c_1\alpha_{\textbf{d}}(t)+c_2\alpha_{\textbf{d'}}(t)$ and $\alpha_{\textbf{d''}}'(t)=c_1\alpha_{\textbf{d}}'(t)+c_2\alpha_{\textbf{d'}}'(t)$. In particular, let $N$ be a pure module of type $\textbf{d}$, then $L_2L_1(\beta^{\bullet}(N))=c'_1L_2L_1(\beta^{\bullet}(N'))+c'_2L_2L_1(\beta^{\bullet}(N''))$ where $N'$ is pure of type $\textbf{d'}$ and $N''$ is pure of type $\textbf{d''}$, where $c'_1, c'_2>0$ are elements in $\mathbb{Q}$.
\end{proposition}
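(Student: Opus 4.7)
\emph{Plan.} The heart of the argument is to establish the polynomial identity $c_1\pi_{\textbf{d}}(t)+c_2\pi_{\textbf{d'}}(t)=\pi_{\textbf{d''}}(t)$ with $c_1>0$, $c_2<0$; the remainder then follows by $\mathbb{Q}$-linear bookkeeping. All three polynomials are supported on the finite set $D=\{d_0,\ldots,d_s,d_s+1\}$ of cardinality $s+2$. Applying the dimension lemma preceding Lemma \ref{4.9} to $\textbf{d''}$, whose support is exactly $D$, the subspace of Laurent polynomials supported on $D$ and divisible by $(1-t)^{s+1}$ is one-dimensional and spanned by $\pi_{\textbf{d''}}$. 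Meanwhile $\pi_{\textbf{d}}$ and $\pi_{\textbf{d'}}$ are linearly independent elements of the two-dimensional space of polynomials supported on $D$ and divisible by $(1-t)^s$ (they differ in which of $d_s$, $d_s+1$ they vanish at), so demanding one extra factor of $(1-t)$ cuts out a one-dimensional subspace that must contain $\pi_{\textbf{d''}}$. This yields both the existence and essentially the uniqueness of $(c_1,c_2)$ up to a common scalar.

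To pin down the signs, I will normalize using $\pi_{\bullet,d_0}=1$ as in Lemma \ref{4.9}(2) and compare coefficients at $t^{d_s}$ and $t^{d_s+1}$. Since $d_s\notin\textbf{d'}$, the coefficient of $t^{d_s}$ on the left equals $c_1\pi_{\textbf{d},d_s}$, and both this and $\pi_{\textbf{d''},d_s}$ carry sign $(-1)^s$ because $d_s$ sits at index $s$ in each of $\textbf{d}$ and $\textbf{d''}$; hence $c_1>0$. Similarly, since $d_s+1\notin\textbf{d}$, the coefficient of $t^{d_s+1}$ equals $c_2\pi_{\textbf{d'},d_s+1}=\pi_{\textbf{d''},d_s+1}$, where the left factor has sign $(-1)^s$ (index $s$ in $\textbf{d'}$) and the right has sign $(-1)^{s+1}$ (index $s+1$ in $\textbf{d''}$), which forces $c_2<0$.

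Next I will derive the two $\alpha$-identities by applying the $\mathbb{Q}$-linear truncation operators $\tau_{\leq d_1}$ and $\tau_{\geq d_2}$ to the polynomial identity. Because $s\geq 2$, each of $\textbf{d},\textbf{d'},\textbf{d''}$ contains $d_0$ and $d_1$ as its two smallest entries and has all remaining entries $\geq d_2$; consequently $\tau_{\leq d_1}$ picks out exactly $\alpha_{\textbf{d}}$, $\alpha_{\textbf{d'}}$, $\alpha_{\textbf{d''}}$, while $\tau_{\geq d_2}$ picks out their complements $\alpha'_{\textbf{d}}$, $\alpha'_{\textbf{d'}}$, $\alpha'_{\textbf{d''}}$. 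Linearity yields the two desired relations $\alpha_{\textbf{d''}}=c_1\alpha_{\textbf{d}}+c_2\alpha_{\textbf{d'}}$ and $\alpha'_{\textbf{d''}}=c_1\alpha'_{\textbf{d}}+c_2\alpha'_{\textbf{d'}}$.

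Finally I will translate to Betti tables using Proposition \ref{4.10}. Rearranging as $\alpha_{\textbf{d}}=(1/c_1)\alpha_{\textbf{d''}}+(-c_2/c_1)\alpha_{\textbf{d'}}$, and likewise for $\alpha'$, both coefficients are positive by the signs just established. Setting $c'_1=-c_2/c_1>0$ and $c'_2=1/c_1>0$, and absorbing the positive normalization constants relating $\beta^{\bullet}$ to $\pi$ (Lemma \ref{4.9}(4)) by rescaling $N,N',N''$, I obtain the positive decomposition $L_2L_1(\beta^{\bullet}(N))=c'_1L_2L_1(\beta^{\bullet}(N'))+c'_2L_2L_1(\beta^{\bullet}(N''))$. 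The only genuinely delicate step is the sign computation: tracking the positional index of $d_s$ and $d_s+1$ in each of the three sequences is essential to pin down $c_1>0$, $c_2<0$ in the orientation that makes the final rearrangement a positive combination; once that is correct everything else is formal.
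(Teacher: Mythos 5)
Your proposal is correct and follows essentially the same route as the paper: establish $c_1\pi_{\textbf{d}}+c_2\pi_{\textbf{d'}}=\pi_{\textbf{d''}}$ via the dimension count on Laurent polynomials supported on $\{d_0,\ldots,d_s,d_s+1\}$ divisible by a power of $(1-t)$, pin down the signs by reading off the coefficients at $t^{d_s}$ and $t^{d_s+1}$ using the alternating-sign property, apply $\tau_{\leq d_1}$ and $\tau_{\geq d_2}$ (valid since $s\geq 2$) to split the identity into the two $\alpha$-identities, and rearrange to get the positive combination. The paper phrases the existence step as ``$\pi_{\textbf{d}},\pi_{\textbf{d'}}$ span the two-dimensional space $V_{\textbf{d''},s}$, which contains $\pi_{\textbf{d''}}$,'' which is the same argument you give.
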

\begin{proof}Consider the $\mathbb{Q}$-vector space spanned by $\pi_{\textbf{d}}(t)$ and $\pi_{\textbf{d'}}(t)$. The two polynomials are linearly independent because $\pi_{\textbf{d},d_s} \neq 0, \pi_{\textbf{d'},d_s} =0, \pi_{\textbf{d},d_s+1}=0, \pi_{\textbf{d},d_s} \neq 0$. So they span $V_{\textbf{d''},s}$. Also $(1-t)^{s+1}|\pi_{\textbf{d''}}(t)$, hence there exist $c_1,c_2 \in \mathbb{Q}$ such that $c_1\pi_{\textbf{d}}(t)+c_2\pi_{\textbf{d'}}(t)=\pi_{\textbf{d''}}(t)$. Now $sgn(c_1)=sgn(\pi_{\textbf{d},d_s})/sgn(\pi_{\textbf{d''},d_s})=(-1)^s/(-1)^s=1$ and $sgn(c_2)=sgn(\pi_{\textbf{d'},d_s+1})/sgn(\pi_{\textbf{d''},d_s+1})=(-1)^s/(-1)^{s+1}=-1$. Since $s \neq 1$, $s \geq 2$, by applying $\tau_{\leq d_1}$ and $\tau_{d_2}$ to this equation we get $\alpha_{\textbf{d''}}(t)=c_1\alpha_{\textbf{d}}(t)+c_2\alpha_{\textbf{d'}}(t)$ and $\alpha_{\textbf{d''}}'(t)=c_1\alpha_{\textbf{d}}'(t)+c_2\alpha_{\textbf{d'}}'(t)$. By Proposition \ref{4.10} this just means $L_2L_1(\beta^{\bullet}(N''))=c_1L_2L_1(\beta^{\bullet}(N))+c_2L_2L_1(\beta^{\bullet}(N''))$, therefore $$L_2L_1(\beta^{\bullet}(N))=-\frac{c_2}{c_1}L_2L_1(\beta^{\bullet}(N'))+\frac{1}{c_1}L_2L_1(\beta^{\bullet}(N'')).$$
Let $c'_1=-\frac{c_2}{c_1}$ and $c'_2=\frac{1}{c_1}$. Since $c_2 < 0 < c_1$, $c_1,c_2 \in \mathbb{Q}$, we know $c'_1, c'_2>0$ and $c'_1, c'_2 \in \mathbb{Q}$.
\end{proof}

Let $C_t$ be the cone generated by the Ext-tables of modules of projective dimension at most 1; let $C_{wf}$ be the same as Corollary \ref{4.7}, that is, the cone generated by the Ext-tables of modules of projective dimension 1 without a free summand; let $C_f$ be the cone generated by the Ext-tables of free modules. Then $C_t=C_{wf}+C_f$. We want to know a generating set and the vertex set of $C_t$. To simplify the expressions, we introduce some more notations.

For a module $N$ of type $\textbf{d}=d_0<d_1<\ldots<d_s$, denote $L_1(\beta^{\bullet}(N))=a_{\textbf{d}}$ and $L_2(a_{\textbf{d}})=b_{\textbf{d}}$.
If $N$ is pure of type $\textbf{d}=d_0<d_1<\ldots<d_s$ that does not satisfy the assumption in Proposition \ref{4.11} or Proposition \ref{4.13}, then $\textbf{d}$ must satisfy proposition $\mathcal{P}$ in Corollary \ref{property-P} and is of length 2 or $n+1$. So either $s=1, \textbf{d}=d_0<d_0+1$, or $s=n, \textbf{d}=d_0,d_0+1,d_2,d_2+1,\ldots,d_2+n-2$. Consider the following 4 kinds of tables that are Ext-tables of some modules:
\begin{enumerate}
\item $A_1=\{L_1(\beta^{\bullet}(N))=a_{\textbf{d}}= (1-t)^{-n}(0,t^{d_0}-t^{d_0+1},0,\ldots,0)$, where $N$ is pure of type $\textbf{d},$ $\textbf{d}=(d_0,d_0+1)\}$.

\item $A_2=\{L_1(\beta^{\bullet}(N))=a_{\textbf{d}}$, where $N$ is pure of type $\textbf{d},$ $\textbf{d}=(d_0,d_0+1,d_2,d_2+1,\ldots,d_2+n-2)\}$. In this case we have
    $$a_{\textbf{d}}= (1-t)^{-n}(\sum_{i=2}^{n}\pi_{\textbf{d},d_i}t^{d_i},\sum_{i=0}^{n}\pi_{\textbf{d},d_i}t^{d_i},0,\ldots,0).$$

\item $A_3=\{L_1(\beta^{\bullet}(N))=a_{\textbf{d}}$, where $N$ is pure of type $\textbf{d},$ $\textbf{d}=(d_0,d_0+1,d_2,d_2+1,\ldots,d_2+s-2), 2 \leq s \leq n-1\}$. In this case we have
    $$a_{\textbf{d}}= (1-t)^{-n}(\sum_{i=2}^{s}\pi_{\textbf{d},d_i}t^{d_i},\sum_{i=0}^{s}\pi_{\textbf{d},d_i}t^{d_i},0,\ldots,0).$$

\item $A_4=\{E^{\bullet}(R(d))=(1-t)^{-n}(t^d,0,\ldots,0), d \in \mathbb{Z}\}$.
\end{enumerate}
As a summary of the propositions above, we know:
\begin{proposition}\label{4.14}
\item (1) $C_{wf}$ is generated by $A_1 \cup A_2 \cup A_3$.

\item (2) $C_f$ is generated by $A_4$.

\item (3) $C_t$ is generated by $A_1 \cup A_2 \cup A_3 \cup A_4$.

\item (4) An element in $A_3$ decomposes into elements in $A_2 \cup A_3$, so it cannot be a vertex.
\end{proposition}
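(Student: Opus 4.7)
The plan is that (1)--(3) are mostly bookkeeping built on Corollary \ref{4.7} and the free-versus-non-free splitting of modules of projective dimension $\leq 1$, while (4) is the substantive step that combines Proposition \ref{4.13} with Corollary \ref{4.12}.

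For (1), I would start from Corollary \ref{4.7}: every element of $C_{wf}$ is a positive rational combination of tables $L_1(\beta^{\bullet}(N)) = a_{\textbf{d}}$ with $N$ pure of some type $\textbf{d}$ of resolution length at least $1$. For each such $a_{\textbf{d}}$, Corollary \ref{4.12} decomposes $b_{\textbf{d}} = L_2(a_{\textbf{d}})$ as a positive rational combination of $b_{\textbf{e}}$'s where $\textbf{e}$ satisfies condition $\mathcal{P}$ and has the same number of entries as $\textbf{d}$ (the set $A$ in the proof of Corollary \ref{4.12} preserves entry count). Since $L_2$ is a $\mathbb{Q}$-linear isomorphism, applying $L_2^{-1}$ produces the same relation for $a_{\textbf{d}}$. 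The $\mathcal{P}$-sequences with $s+1$ entries ($1 \leq s \leq n$) are exactly those listed in $A_1$ (when $s = 1$), $A_3$ (when $2 \leq s \leq n-1$), or $A_2$ (when $s = n$), so $A_1 \cup A_2 \cup A_3$ generates $C_{wf}$.

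Parts (2) and (3) are computational. A free module $F = \bigoplus_i R(d_i)^{a_i}$ has $E^{\bullet}(F) = (HS(F^*), 0, \ldots, 0) = \sum_i a_i E^{\bullet}(R(d_i))$ since $R(d)^* \cong R(-d)$; this writes every element of $C_f$ as a positive combination of tables in $A_4$, and conversely every element of $A_4$ is itself the Ext-table of a free module, giving (2). For (3), any $M$ with $\textup{projdim}(M) \leq 1$ splits as $M' \oplus F$ with $F$ its maximal free summand and $M'$ either zero or of projective dimension $1$ without a free summand, so $E^{\bullet}(M) = E^{\bullet}(M') + E^{\bullet}(F) \in C_{wf} + C_f$; hence $C_t = C_{wf} + C_f$ and combining generators produces $A_1 \cup A_2 \cup A_3 \cup A_4$.

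The main obstacle is (4). Let $a_{\textbf{d}} \in A_3$ with $\textbf{d} = (d_0, d_0+1, d_2, d_2+1, \ldots, d_2+s-2)$ and $2 \leq s \leq n-1$. This $s$ meets the hypothesis of Proposition \ref{4.13}, so after applying $L_2^{-1}$ to the relation obtained there, one gets $a_{\textbf{d}} = c'_1 a_{\textbf{d}'} + c'_2 a_{\textbf{d}''}$ with $c'_1, c'_2 > 0$. A direct check shows that $\textbf{d}'' = (d_0, d_0+1, d_2, d_2+1, \ldots, d_2+s-2, d_2+s-1)$ satisfies $\mathcal{P}$ with $s+2$ entries ($d''_1 - d''_0 = 1$, and $d''_j = d_2 + j - 2$ for all $2 \leq j \leq s+1$), so $a_{\textbf{d}''}$ lies in $A_2$ when $s+1 = n$ and in $A_3$ when $s+1 \leq n-1$. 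The sequence $\textbf{d}' = (d_0, d_0+1, d_2, d_2+1, \ldots, d_2+s-3, d_2+s-1)$ has $s+1$ entries but a terminal gap of size $2$, so it fails $\mathcal{P}$; applying Corollary \ref{4.12} to $\textbf{d}'$ then decomposes $a_{\textbf{d}'}$ into tables $a_{\textbf{e}}$ with $\textbf{e}$ satisfying $\mathcal{P}$ and still of $s+1$ entries, all of which lie in $A_3$ since $s \in [2, n-1]$. Substituting gives a positive-rational decomposition of $a_{\textbf{d}}$ into at least two elements of $A_2 \cup A_3$. Since the paper declares a decomposition trivial only when it has a single summand, this exhibits $a_{\textbf{d}}$ as decomposing nontrivially, and so $a_{\textbf{d}}$ cannot be a vertex.
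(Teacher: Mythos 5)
Your proof is correct and takes essentially the same route as the paper's, which cites Corollaries \ref{4.7} and \ref{4.12} for (1), the rank-one splitting of free modules for (2)--(3), and Proposition \ref{4.13} for (4). You are in fact slightly more careful than the paper on part (4): the paper cites only Proposition \ref{4.13}, but when $s \geq 3$ the resulting sequence $\textbf{d}'$ ends in a gap of size $2$ and therefore fails condition $\mathcal{P}$, so $a_{\textbf{d}'}$ does not itself lie in $A_3$; your extra application of Corollary \ref{4.12} to decompose $a_{\textbf{d}'}$ into $\mathcal{P}$-sequences with the same number of entries (hence all in $A_3$ since $2 \le s \le n-1$) is exactly the step needed to close that small gap and deliver the stated decomposition into $A_2 \cup A_3$.
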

\begin{proof}
\item (1) This is proved by Corollary \ref{4.7} and Corollary \ref{4.12}.
\item (2) This is true because every free module is a direct sum of free modules of rank 1.
\item (3) It is trivial by (1) and (2).
\item (4) This is proved by Proposition \ref{4.13}.
\end{proof}
Therefore, to find the vertex set of $C_t$ it suffices to determine whether elements in $A_1 \cup A_2 \cup A_4$ decompose into elements in $A_1 \cup A_2 \cup A_3 \cup A_4$ nontrivially.
\begin{proposition}\label{4.15}The vertex set of $C_t$ is $A_1 \cup A_2 \cup A_4$.
\end{proposition}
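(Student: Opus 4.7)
The plan is to leverage Proposition~\ref{4.14}: $C_t$ is generated by $A_1\cup A_2\cup A_3\cup A_4$ and every element of $A_3$ decomposes nontrivially, so the vertex set is contained in $A_1\cup A_2\cup A_4$. It remains to show each element of these three families is a vertex. The key observation is that for any $M$ of projective dimension at most $1$, both coordinates $E^0(M)=HS(M^*)$ and $E^1(M)=HS(\Tr M)$ are non-negative Laurent series (Hilbert series of graded modules); therefore in any decomposition $w=\sum c_iv_i$ with $c_i>0$ and $v_i\in C_t$, the vanishing of $E^j(w)$ forces the vanishing of $E^j(v_i)$ for every $i$. A direct calculation using Lemma~\ref{4.9}(2) shows that for a generator in $A_2\cup A_3$ attached to a degree sequence $\textbf{d}$, the module $\Tr(N)$ has rank $|\pi_{\textbf{d},d_1}|-1=\prod_{j\geq 2}\frac{d_j-d_0}{d_j-d_0-1}-1>0$; hence among the four families, only $A_1$ satisfies $E^0\equiv 0$ and only $A_4$ satisfies $E^1\equiv 0$.

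The $A_4$ and $A_1$ vertex properties follow quickly. If $w\in A_4$, then $E^1(w)=0$ forces each $v_i$, once refined into a positive combination of generators, to lie in $\mathbb{Q}_{\geq 0}A_4$; matching $E^0$ reduces to the monomial identity $t^d=\sum c_it^{d_i'}$, whose only positive solution has all $d_i'=d$, making the decomposition trivial. If $w\in A_1$, the argument is symmetric: $E^0(w)=0$ forces each refined generator to lie in $A_1$, and the monomial identity $t^{d_0}=\sum c_it^{d_0^{(i)}}$ in $E^1$ similarly forces a trivial decomposition.

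The main obstacle is the $A_2$ case. Fix $w\in A_2$ attached to $\textbf{d}=(d_0,d_0+1,d_2,d_2+1,\ldots,d_2+n-2)$, a degree sequence of maximal length $n+1$. Because $\textup{projdim}(N)=n$, the Hilbert series $HS(N)=\pi_{\textbf{d}}(t)/(1-t)^n$ is a polynomial (equivalently, $N$ has finite length). In a decomposition $w=\sum c_iv_i$ refined so that the $v_i$ lie in the generating set, the $E^1$ contributions $c_iHS(\Tr(v_i))$ are non-negative Laurent series whose sum is the polynomial $HS(N)$; since positive combinations of non-negative series cannot cancel, each $HS(\Tr(v_i))$ is itself a polynomial. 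This rules out any $A_1$ or $A_3$ generator (whose associated pure modules have positive Krull dimension $n-1$ or $n-s\geq 1$), leaving only $A_2$ and $A_4$ contributions.

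To finish, I compare the polynomial identities obtained from $E^0$ and $E^1$ after clearing $(1-t)^{-n}$: $\pi_{\textbf{d}}(t)=\sum_{A_2}c_k\pi_{\textbf{d}^{(k)}}(t)$ and $\alpha'_{\textbf{d}}(t)=\sum_{A_2}c_k\alpha'_{\textbf{d}^{(k)}}(t)+\sum_{A_4}e_lt^{d_l'}$. A minimum-degree comparison in the $E^1$ identity forces $d_0^{(k)}\geq d_0$ with $\sum_{d_0^{(k)}=d_0}c_k=1$, and in the $E^0$ identity forces $d_2^{(k)}\geq d_2$ and $d_l'\geq d_2$; a maximum-degree comparison in $E^1$ forces $d_2^{(k)}\leq d_2$. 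Hence $d_2^{(k)}=d_2$ for every contributing $A_2$ generator, so $d_0^{(k)}=d_0$ together with $d_2^{(k)}=d_2$ uniquely determines $\textbf{d}^{(k)}=\textbf{d}$. Writing $a_i=\sum_{d_0^{(k)}=i}c_k$ and $b_i=\sum_{d_0^{(k)}=i}c_kc_1^{(k)}$, the coefficient of $t^{d_0+j}$ in the $E^1$ identity gives $a_{d_0+1}=b_{d_0}-c_1$ and $a_{d_0+j}=b_{d_0+j-1}$ for $2\leq j\leq d_2-d_0-1$; starting from $a_{d_0}=1$ and $b_{d_0}=c_1a_{d_0}=c_1$, this propagates to $a_{d_0+j}=0$ for all $j\geq 1$. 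Thus every $A_2$ contribution equals $w$ and their coefficients sum to $1$; the $E^0$ identity then forces $\sum_{A_4}e_lt^{d_l'}=0$, so no $A_4$ generator appears and the decomposition is trivial.
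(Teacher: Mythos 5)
Your proof is correct and reaches the same conclusion as the paper, but it takes a somewhat different route. The paper's argument passes through the auxiliary invertible map $L_2$, which separates $\pi_{\textbf{d}}$ into the two pieces $\alpha_{\textbf{d}}=\pi_{\textbf{d},d_0}t^{d_0}+\pi_{\textbf{d},d_1}t^{d_1}$ and $\alpha'_{\textbf{d}}=\sum_{i\geq 2}\pi_{\textbf{d},d_i}t^{d_i}$, so that the max/min-degree comparisons on the two pieces immediately give $d^{(k)}_{0,1}\leq d_{0,1}$ (from the $\alpha$ part) and $d^{(k)}_{0,0}\geq d_{0,0}$, hence $d^{(k)}_{0,0}=d_{0,0}$ outright. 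You instead work directly with the coordinates $E^0(M)=(1-t)^{-n}\alpha'_{\textbf{d}}$ and $E^1(M)=(1-t)^{-n}\pi_{\textbf{d}}$. This loses direct access to the max degree of the $\alpha$ part, so your degree comparisons yield only $d_0^{(k)}\geq d_0$ and $d_2^{(k)}=d_2$, and you compensate with the coefficient-propagation induction $a_{d_0}=1$, $a_{d_0+1}=0$, $a_{d_0+j}=0$ for $j\geq 1$ (there is a sign slip in $a_{d_0+1}=b_{d_0}-c_1$, the identity should read $a_{d_0+1}+b_{d_0}=c_1$, but since $b_{d_0}=c_1$ the conclusion $a_{d_0+1}=0$ is unaffected). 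Both arguments share the same key ingredients: non-negativity of $E^0,E^1$ on $C_t$ to isolate the $A_1$ and $A_4$ vertices, and the polynomial-versus-infinite-series (i.e.\ pole order at $t=1$) argument to rule out $A_1,A_3$ contributions to an $A_2$ element (this is exactly the paper's step 6). The main tradeoff is that the paper's use of $L_2$ makes the $A_2$ case mechanical once the degree comparisons are done, whereas your route is a touch more computational but avoids introducing the auxiliary map entirely. One further presentational wrinkle: you invoke ``$d_0^{(k)}=d_0$'' to conclude $\textbf{d}^{(k)}=\textbf{d}$ a sentence before the induction that actually establishes it; the logic is sound but the ordering of claims could be tightened.
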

\begin{proof}Observe that only elements in $A_1$ have a 0 entry in the first component and the elements in $A_2, A_3$ and $A_4$ have positive entries. So if an element in $A_1$ decomposes, it can only decompose into elements in $A_1$, but elements in $A_1$ are linearly independent, therefore, the decomposition is trivial. Similarly, checking the second component we know elements in $A_4$ only have trivial decompositions. So it remains to check elements in $A_2$. We apply $L_2$ again to the elements in $A_1, A_2, A_3$ and $A_4$. We have:
$$L_2((1-t)^{-n}(0,t^{d_0}-t^{d_0+1},0,\ldots,0))=(t^{d_0}-t^{d_0+1},0,0,\ldots,0),$$
$$L_2((1-t)^{-n}(\sum_{i=2}^{n}\pi_{\textbf{d},d_i}t^{d_i},\sum_{i=0}^{n}\pi_{\textbf{d},d_i}t^{d_i},0,\ldots,0))=$$
$$(\sum_{i=0}^{1}\pi_{\textbf{d},d_i}t^{d_i},\sum_{i=2}^{n}\pi_{\textbf{d},d_i}t^{d_i},0,\ldots,0),$$
$$L_2((1-t)^{-n}(\sum_{i=2}^{s}\pi_{\textbf{d},d_i}t^{d_i},\sum_{i=0}^{n}\pi_{\textbf{d},d_i}t^{d_i},0,\ldots,0))=$$
$$(\sum_{i=0}^{1}\pi_{\textbf{d},d_i}t^{d_i},\sum_{i=2}^{s}\pi_{\textbf{d},d_i}t^{d_i},0,\ldots,0),$$
$$L_2((1-t)^{-n}(t^d,0,\ldots,0))=(-t^d,t^d,0,\ldots,0).$$
The first 3 kinds of elements are also equal to $L_2(a_{\textbf{d}})=b_{\textbf{d}}$. Now assume we have an equation
\[b_{\textbf{d}_0}=\sum_{j \in J} q_jb_{\textbf{d}_j}+\sum_{k \in K} q_kb_{\textbf{d}_k}+\sum_{l \in L} q_lb_{\textbf{d}_l}+\sum_{m \in M} q_m(-t^m,t^m,0,\ldots,0). \tag{*}\]
with $b_{\textbf{d}_j},b_{\textbf{d}_k},b_{\textbf{d}_l}$ belonging to $L_2(A_1), L_2(A_2), L_2(A_3)$ respectively and $q_j, q_k, q_l, q_m$ being positive rational numbers. We prove that this decomposition is trivial in the following steps.

(1) Observe the following fact: for each $b_{\textbf{d}_j},b_{\textbf{d}_k},b_{\textbf{d}_l},(-t^m,t^m,0,\ldots,0)$, the lowest term of the second component has a positive coefficient. So let $b_{\min2}=\min\{\textbf{d}_{k,2}, \textbf{d}_{l,2}, m\}$, then $b_{\min2}=\textbf{d}_{0,2}$. In fact, if $b_{\min2}<\textbf{d}_{0,2}$ then on the right side of (*) the coefficient of $t^{b_{\min2}}$ in the second component is positive while on the left side it is 0. If $b_{\min2}>\textbf{d}_{0,2}$ then on the left side of (*) the coefficient of $t^{\textbf{d}_{0,2}}$ in the second component is positive while on the right side it is 0.

(2) Observe another fact: for each $b_{\textbf{d}_j},b_{\textbf{d}_k},b_{\textbf{d}_l},(-t^m,t^m,0,\ldots,0)$, the highest term of the first component has a negative coefficient. Thus we can use the same method as in (1) to prove that if $b_{\max1}=\max\{\textbf{d}_{j,1}, \textbf{d}_{k,1}, \textbf{d}_{l,1}, m\}$, then $b_{\max1}=\textbf{d}_{0,1}$.

(3) For an integer $m$, $m \leq \textbf{d}_{0,1}$ and $m \geq \textbf{d}_{0,2}$ are contradictory to each other because $\textbf{d}_{0,2} > \textbf{d}_{0,1}$. So
in (*) the term $(-t^m,t^m,0,\ldots,0)$ cannot appear.

(4) For each $b_{\textbf{d}_j},b_{\textbf{d}_k},b_{\textbf{d}_l}$, the lowest term of the first component has a positive coefficient. So let $b_{\min1}=\min\{\textbf{d}_{j,0}, \textbf{d}_{k,0}, \textbf{d}_{l,0}\}$, then $b_{\min1}=\textbf{d}_{0,0}$.

(5) Observe the fact that $\textbf{d}_{0,1}=\textbf{d}_{0,0}+1$, $\textbf{d}_{j,1}=\textbf{d}_{j,0}+1$, $\textbf{d}_{k,1}=\textbf{d}_{k,0}+1$, $\textbf{d}_{l,1}=\textbf{d}_{l,0}+1$ for any $j,k,l$. This, together with (2) and (4) implies that $\textbf{d}_{0,0}=\textbf{d}_{j,0}=\textbf{d}_{k,0}=\textbf{d}_{l,0}$ for any $j,k,l$.

(6) Apply $L_2^{-1}$ to (*) to get
\[a_{\textbf{d}_0}=\sum_{j \in J} q_ja_{\textbf{d}_j}+\sum_{k \in K} q_ka_{\textbf{d}_k}+\sum_{l \in L} q_la_{\textbf{d}_l}.
\tag{**}\]
The second entry of $a_{\textbf{d}}$ is $(1-t)^{-n}\pi_{\textbf{d}}(t)$. If the length of $\textbf{d}$ is $s$, then the order of zero at $t=1$ of $\pi_{\textbf{d}}(t)$ is $s$, so the order of pole at $t=1$ of $(1-t)^{-n}\pi_{\textbf{d}}(t)$ is $n-s$; as $1 \leq s \leq n$, $(1-t)^{-n}\pi_{\textbf{d}}(t)$ is a Laurent polynomial if and only if $n=s$, and if $n \neq s$, $(1-t)^{-n}\pi_{\textbf{d}}(t)$ is the second entry of a multiple of an Ext-table of a module, hence all the coefficients are positive. So in (**) the term $q_ka_{\textbf{d}_k}$ and $q_la_{\textbf{d}_l}$ does not appear, otherwise the second exponent of the right side is a power series with infinitely many terms with positive coefficients, while the second exponent of the left side is a Laurent polynomial, which is a contradiction.

(7) We get that in (*), $$b_{\textbf{d}_0}=\sum_{j \in J} q_jb_{\textbf{d}_j}.$$
All the elements are in $A_2$, so they are of the form $b_{\textbf{d}_j}=d_{j,0}<d_{j,0}+1<d_{2,j}<\ldots<d_{2,j}+n-2$. Also by (5) all the $d_{j,0}$ are equal to $d_{0,0}$. But in this case the second entry of $b_{\textbf{d}_j}$, which is $\alpha_{\textbf{d}_j}'(t)$, only has nonzero entries in $d_{j,2},\ldots,d_{j,2}+n-2$, so all these $\alpha_{\textbf{d}_j}'(t)$'s are linearly independent, which implies that all the $b_{\textbf{d}_j}$'s are linearly independent. Therefore, the decomposition is trivial.
\end{proof}
We see when $n > 2$, $A_3 \neq \emptyset$, and this implies:
\begin{proposition}\label{4.16}Any element in $A_3$ is not a positive linear combination of elements in $A_1 \cup A_2 \cup A_4$.
\end{proposition}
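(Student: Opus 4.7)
The plan is to read off the second component of the hypothetical equation and exploit the precise order of vanishing at $t=1$ of the polynomial $\pi_{\textbf{d}}(t)$. Fix an element $a_{\textbf{d}_0} \in A_3$ whose degree sequence $\textbf{d}_0$ has length $s+1$ with $2 \le s \le n-1$, and suppose for contradiction that
$$a_{\textbf{d}_0} = \sum_{i} q_i\, a_{\textbf{e}_i} + \sum_{j} q_j\, a_{\textbf{f}_j} + \sum_{m} q_m\, (1-t)^{-n}(t^{g_m},0,\ldots,0),$$
where $\textbf{e}_i = (e_i, e_i+1)$ indexes an $A_1$ summand, $\textbf{f}_j$ indexes an $A_2$ summand, and all coefficients are strictly positive rationals.

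Reading off the second component and multiplying through by $(1-t)^n$ (which kills the $A_4$ contributions, since those are supported in the first component only) yields a polynomial identity
$$\pi_{\textbf{d}_0}(t) = \sum_{i} q_i\, t^{e_i}(1-t) + \sum_{j} q_j\, \pi_{\textbf{f}_j}(t).$$
By Lemma~\ref{4.9}(3) the three classes of summands have zeros at $t=1$ of order \emph{exactly} $s$, $1$, and $n$ respectively, each with a strictly positive leading coefficient after dividing by the corresponding power of $(1-t)$ and evaluating at $t=1$.

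The contradiction now comes from two Taylor coefficients at $t = 1$. Differentiate once and evaluate at $t=1$: the left-hand side vanishes because $s \ge 2$, the $A_2$ contribution vanishes because $n \ge 2$, and each $t^{e_i}(1-t)$ contributes $-1$, forcing $\sum_i q_i = 0$ and hence killing all $A_1$ summands. The identity collapses to $\pi_{\textbf{d}_0}(t) = \sum_j q_j \pi_{\textbf{f}_j}(t)$. Now take the $s$-th derivative at $t=1$: since $s \le n-1 < n$, every $\pi_{\textbf{f}_j}^{(s)}(1) = 0$, whereas $\pi_{\textbf{d}_0}^{(s)}(1) = (-1)^s s! \cdot [\pi_{\textbf{d}_0}(t)/(1-t)^s]\big|_{t=1} \ne 0$. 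This is the desired contradiction.

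I do not expect a serious obstacle; the argument is essentially careful bookkeeping of vanishing orders. The defining bounds $2 \le s \le n-1$ of $A_3$ are used in exactly the right places: $s \ge 2$ kills the $A_1$ contributions via the first-order jet at $t=1$, while $s \le n-1$ kills the $A_2$ contributions via the $s$-th-order jet. The $A_4$ summands drop out of the second component automatically, which is why they require no separate discussion.
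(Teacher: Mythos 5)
Your proof is correct and is essentially the paper's argument in a slightly different packaging: the paper tracks the pole order at $t=1$ of the second series component of $a_{\textbf{d}}$ (order $n-1$ for $A_1$, $0$ for $A_2 \cup A_4$, strictly between for $A_3$), while you multiply through by $(1-t)^n$ and track vanishing orders via Taylor jets, which is the same dichotomy. Both hinge on $2 \le s \le n-1$ in exactly the way you describe.
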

\begin{proof}For elements in $A_4$ the second component is 0. For elements in $A_1$ the second component is $(t^d-t^{d+1})/(1-t)^n$. It has a pole at $t=1$ of order $n-1$, and $\lim_{t \to 1}(1-t)^{n-1}(t^d-t^{d+1})/(1-t)^n = 1 > 0$. For elements in $A_4$ the second component is $\pi_{\textbf{d}}(t)/(1-t)^n$ which is regular at $t=1$. So for every linear combination of elements in $A_1 \cup A_4$ the second component is regular at $t=1$; for every linear combination of elements in $A_1 \cup A_2 \cup A_4$ where an element in $A_2$ appears, the second component of this sum is a series $f(t)$ which has a pole of order $n-1$ such that $\lim_{t \to 1}(1-t)^{n-1}f(t) > 0$. But for an element in $A_3$ the second component has a pole at $t=1$ of order $n-s$ where $2 \leq s \leq n-1$, so it cannot be a positive linear combination of elements in $A_1 \cup A_2 \cup A_4$.
\end{proof}
Proposition \ref{4.14}, \ref{4.15} and \ref{4.16} describe the cone of Ext-tables. By the local duality, they also give a description of the cone of local cohomology tables. 
\begin{definition}\label{definition-BMB'M}
For $0 \leq e \leq n$, let $S_e=k[x_1,\ldots,x_e]$ and view it as an $R$-algebra via projection $\pi: R \to S_e \cong R/(x_{e+1},\ldots,x_n)$.
\begin{enumerate}
\item $B_{d,e}$ is the set of degree sequences $\{(d_0,d_0+1),d_0 \in \mathbb{Z}\}\cup\{(d_0,d_0+1,d_2,d_2+1,\ldots,d_2+e-2), d_0,d_2 \in \mathbb{Z}, d_2 \geq d_0+2\}$;
\item $B'_{d,e}$ is the set of degree sequences $\{(d_0,d_0+1,d_2,d_2+1,\ldots,d_2+s-2),d_0,d_2 \in \mathbb{Z}, d_2 \geq d_0+2, 2 \leq s \leq e\}$;
\item $B_{M,e}$ be the following set of isomorphic classes of finitely generated $S_e$-modules viewed as $R$-modules: $M$ is in $B_{M,e}$ if and only if either it is free over $S_e$ of rank 1, or when viewing $M$ as an $S_e$-module, its projective dimension is 1, it does not have an $S_e$-summand, and $\Tr(M)$ is pure of type $\textbf{d}$ for $\textbf{d} \in B_{d,e}$.
\item Define $B'_{M,e}$ similarly as $B_{M,e}$ where we replace $B_{d,e}$ by $B'_{d,e}$.
\end{enumerate}
\end{definition} 
\begin{theorem}\label{4.17}Let $R$ be a polynomial ring of dimension $n \geq 2$. Let $C$ be the cone of local cohomology tables of modules of projective dimension at most $1$. Then:
\item (1) $\{H^{\bullet}(M), M \in B'_{M,n}\}$ generates $C$.
\item (2) $\{H^{\bullet}(M), M \in B_{M,n}\}$ is the vertex set of $C$.
\item (3) If $n > 2$, not every element in $C$ is a positive linear combination of the extremal rays.
\end{theorem}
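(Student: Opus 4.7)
The plan is to deduce all three parts of Theorem~\ref{4.17} from Propositions~\ref{4.14}, \ref{4.15}, and \ref{4.16} by transporting them along the duality isomorphism $L_0\colon V\to V^*$. Since $H^{\bullet}(M)=L_0(E^{\bullet}(M))$ and $L_0$ is a $\mathbb{Q}$-linear bijection, Lemma~\ref{2.1} applied to $L_0$ says that $L_0$ carries any generating set of the cone $C_t$ of Ext-tables of modules of projective dimension at most $1$ to a generating set of $C=L_0(C_t)$, and carries the vertex set of $C_t$ bijectively to the vertex set of $C$. Moreover, $L_0$ preserves positive rational linear combinations, so obstructions of the form ``this element is not a positive combination of those'' transfer from the Ext side to the local cohomology side.

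The bulk of the work is setting up a dictionary between the families $A_1,A_2,A_3,A_4$ and the module families $B_{M,n}$, $B'_{M,n}$. By Proposition~\ref{4.4}(3), $\Tr$ gives a bijection between isomorphism classes of $R$-modules of projective dimension $1$ without a free summand and nonzero $R$-modules of dimension at most $n-1$, and pure transposes of a given degree type correspond to the $A_i$ with that degree type. Unwinding the definitions: $A_1$ corresponds to the part of $B_{M,n}$ where $\Tr(M)$ is pure of type $(d_0,d_0+1)$; $A_2$ to the part of $B_{M,n}$ (and of $B'_{M,n}$ when $s=n$) where $\Tr(M)$ is pure of type $(d_0,d_0+1,d_2,\ldots,d_2+n-2)$; $A_3$ to the part of $B'_{M,n}$ where $\Tr(M)$ is pure of type $(d_0,d_0+1,d_2,\ldots,d_2+s-2)$ with $2\le s\le n-1$; and $A_4$ to the free rank-$1$ summands of both $B_{M,n}$ and $B'_{M,n}$. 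This gives $\{H^{\bullet}(M):M\in B_{M,n}\}=L_0(A_1\cup A_2\cup A_4)$ and $\{H^{\bullet}(M):M\in B'_{M,n}\}=L_0(A_1\cup A_2\cup A_3\cup A_4)$, reading $B'_{M,n}\supset B_{M,n}$ so that the length-$1$ pure types are included.

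With the dictionary in hand, the three parts follow mechanically. For (1), Proposition~\ref{4.14}(3) says $A_1\cup A_2\cup A_3\cup A_4$ generates $C_t$, so its $L_0$-image generates $C$. For (2), Proposition~\ref{4.15} identifies $A_1\cup A_2\cup A_4$ as the vertex set of $C_t$, so Lemma~\ref{2.1} gives that its $L_0$-image is the vertex set of $C$. For (3), if $n>2$ then $A_3$ is nonempty (take $s=2$), and Proposition~\ref{4.16} says no element of $A_3$ is a positive combination of elements of $A_1\cup A_2\cup A_4$; applying $L_0$ gives the same statement for $C$, so the vertex set fails to generate $C$.

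The hard part will not be conceptual but notational: one must carefully match the ranges of $s$ in the definitions of $B_{d,n}$ and $B'_{d,n}$ to the families $A_1,A_2,A_3$, and verify that the bijection $\Tr$ from Proposition~\ref{4.4} respects the decomposition into a free part (contributing $A_4$) and a non-free part (contributing $A_1\cup A_2\cup A_3$), using Proposition~\ref{4.4}(5). Once this bookkeeping is done, the theorem reduces to three single-line applications of Lemma~\ref{2.1}.
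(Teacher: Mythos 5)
Your proposal is correct and matches the paper's own (largely implicit) argument: the paper states Theorem~\ref{4.17} with only the remark that Propositions~\ref{4.14}, \ref{4.15}, \ref{4.16} transfer to the local cohomology side ``by local duality,'' which is precisely your transport along $L_0$ via Lemma~\ref{2.1}, together with the bookkeeping dictionary between $A_1,\ldots,A_4$ and $B_{M,n}$, $B'_{M,n}$ that you spell out. You also correctly flag that, as literally written, $B'_{d,n}$ omits the length-one degree sequences $(d_0,d_0+1)$, which would make part~(1) false since elements of $A_1$ have zero first component and hence cannot be generated by $A_2\cup A_3\cup A_4$; the paper clearly intends $B'_{d,n}$ to contain these, as your reading ``$B'_{M,n}\supset B_{M,n}$'' corrects.
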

\begin{theorem}\label{theorem-all local cohomology tables}
Let $R$ be a polynomial ring of dimension $n \geq 3$. Let $C_H$ be the cone of local cohomology tables of all finitely generated graded $R$-modules. Then $C_H$ is not generated by its vertices.
\end{theorem}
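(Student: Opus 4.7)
The plan is to produce a specific witness module $M$ for which $H^{\bullet}(M) \in C_H$ is not a positive rational combination of vertices of $C_H$, by showing that any hypothetical such decomposition in $C_H$ must in fact live inside the smaller cone $C$ of Theorem \ref{4.17}, where the failure is already known. The witness is the same sort of module that obstructed generation of $C$ by its vertices, namely one of projective dimension $1$ whose Auslander transpose is pure of a degree sequence in $A_3$.

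Since $n \geq 3$, the set $A_3$ of Section 4 is nonempty, so I would choose $N$ pure of degree sequence $\textbf{d} = (d_0, d_0+1, d_2, d_2+1, \ldots, d_2+s-2)$ for some $2 \leq s \leq n-1$ and set $M = \Tr(N)$. By Proposition \ref{4.4}, $M$ has projective dimension $1$ with no free summand, and by Proposition \ref{4.5} combined with the local duality map $L_0$, the table $H^{\bullet}(M)$ is supported only in cohomological degrees $n-1$ and $n$; entries in positions $0, 1, \ldots, n-2$ all vanish.

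Next, suppose for contradiction that $H^{\bullet}(M) = \sum_i r_i v_i$ with $r_i > 0$ and $v_i$ vertices of $C_H$. Since $\{H^{\bullet}(M') : M' \text{ finitely generated graded}\}$ is a generating set of $C_H$, the vertex-versus-generating-set remark of Section 2 presents each $v_i$ as a positive rational multiple of $H^{\bullet}(M_i)$ for some module $M_i$. Non-negativity of every entry of every $v_i$, combined with the vanishing of $H^{\bullet}(M)$ in positions $0$ through $n-2$, forces each $v_i$ to vanish in those positions as well. Hence $H^j_{\mathfrak{m}}(M_i) = 0$ for $j \leq n-2$, giving $\depth(M_i) \geq n-1$; the Auslander-Buchsbaum formula then yields $\textup{pd}(M_i) \leq 1$, so each $v_i$ lies in $C$. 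A nontrivial decomposition of $v_i$ inside $C$ would also be nontrivial inside the larger $C_H$, contradicting that $v_i$ is a vertex of $C_H$, so each $v_i$ is in fact a vertex of $C$ as well.

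We would therefore have $H^{\bullet}(M)$ expressed as a positive rational combination of vertices of $C$, which contradicts Proposition \ref{4.16} (equivalently, Theorem \ref{4.17}(3)), since $H^{\bullet}(M)$ corresponds under $L_0$ to an element of $A_3$. I do not anticipate a major technical obstacle: the only subtle step is the realization of each vertex $v_i$ as $c \cdot H^{\bullet}(M_i)$ for a single module $M_i$, which is exactly the content of the vertex/generating-set remark in Section 2. Once that is granted, the pole/support analysis plus Auslander-Buchsbaum is routine, and the whole argument collapses cleanly to the previously established obstruction in $C$.
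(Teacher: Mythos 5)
Your proposal is correct and follows essentially the same route as the paper: exhibit a witness $H^{\bullet}(M)$ supported only in cohomological positions $n-1$ and $n$ coming from a pure module of type in $A_3$, observe that nonnegativity of local cohomology tables forces any hypothetical vertex decomposition in $C_H$ to stay inside the subcone $C$ of tables of modules of projective dimension at most $1$, and then invoke Proposition \ref{4.16}. Your argument is in fact slightly more careful than the paper's in one place: the paper simply asserts that an element of $C_H$ vanishing in columns $0,\ldots,n-2$ lies in $C$, while you justify it by realizing each vertex as a multiple of a single $H^{\bullet}(M_i)$ and applying Auslander--Buchsbaum to get $\textup{pd}(M_i)\leq 1$.
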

\begin{proof}
Let $C$ be the cone of local cohomology tables of modules of projective dimension at most $1$. Then $C \subset C_H$, and $x \in C_H$ is in $C$ if and only if all columns of $x$ vanish except for the last two columns which represent the $(n-1)$-th and $n$-th local cohomology. All elements in $C_H$ have nonnegative entries. Thus, if an element in $C$ decomposes in $C_H$, then this decomposition must lie in $C$, so a vertex of $C$ is also a vertex of $C_H$ in $C$. Since $n \geq 3$, we can choose $x \in C$ that does not decompose into vertices of $C$. But any decomposition of $x$ in $C_H$ also lies in $C$, so $x$ does not decompose into vertices of $C_H$.
\end{proof}
\begin{corollary}Let $e$ be an integer with $0 \leq e \leq n$. Let $C_e$ be the cone generated by local cohomology tables of modules $M$ with $\dim(M) \leq e$, \textup{depth}$M \geq e-1$. View $S_e$-modules as $R$-modules via $\pi$. Then for $e \geq 2$:
\item (1) $\{H^{\bullet}(M), M \in B'_{M,e}\}$ generates $C_e$.
\item (2) $\{H^{\bullet}(M), M \in B_{M,e}\}$ is the vertex set of $C_e$.
\item (3) If $n > 2$, not every element in $C_e$ is a positive linear combination of the extremal rays.
\end{corollary}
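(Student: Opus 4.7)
The plan is to reduce the statement about $R$-modules of dimension at most $e$ and depth at least $e-1$ to a statement about $S_e$-modules of projective dimension at most $1$, and then apply Theorem~\ref{4.17} to the polynomial ring $S_e$ (which has dimension $e \geq 2$, so the hypothesis of that theorem is satisfied).

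First I would invoke Lemma~\ref{3.7}: for every finitely generated graded $R$-module $M$ with $\dim M \leq e$, there is an $S_e$-module $N$ whose local cohomology table agrees with that of $M$ (up to a positive rational scalar and appending zeros in the columns indexed by $e+1,\ldots,n$). Lemma~\ref{3.7} also preserves depth and dimension, so $\operatorname{depth} N = \operatorname{depth} M \geq e-1$. By the Auslander--Buchsbaum formula applied over $S_e$, $\operatorname{pd}_{S_e} N = e - \operatorname{depth}_{S_e} N \leq 1$. Conversely, any $S_e$-module $N$ with $\operatorname{pd}_{S_e} N \leq 1$ satisfies $\operatorname{depth} N \geq e-1$ and $\dim N \leq e$, and when viewed as an $R$-module via $\pi$ its local cohomology table is unchanged (the Remark after Lemma~\ref{3.7} records exactly this invariance). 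Therefore the cone $C_e$, up to the natural embedding, is identified with the cone of local cohomology tables of finitely generated graded $S_e$-modules of projective dimension at most $1$.

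Now Theorem~\ref{4.17}, applied with $R$ replaced by $S_e$, describes that cone: its generating set is $\{H^\bullet(M) : M \in B'_{M,e}\}$, its vertex set is $\{H^\bullet(M) : M \in B_{M,e}\}$, and when $e > 2$ not every element decomposes as a positive linear combination of its vertices. Under the embedding into $V^*$ for $R$, these tables acquire zeros in the $(e+1)$-st through $n$-th components, but since the embedding is $\mathbb{Q}$-linear and injective (an $R$-module coming from $S_e$ has vanishing $H^i_\mathfrak{m}$ for $i > e$), Lemma~\ref{2.1} transports the generating set, vertex set, and non-positivity phenomena verbatim. This yields (1), (2), and part of (3).

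The only step that requires a small additional argument is the case distinction in~(3): the statement reads "if $n > 2$," not "if $e > 2$." If $e > 2$, non-generation follows immediately from Theorem~\ref{4.17}(3) for $S_e$. If $e = 2$ but $n > 2$, the cone $C_e$ inside $V^*$ of dimension $n+1$ has all vertices supported in the first $3$ columns; but the cone $C_{e'} \subseteq C_{e}$ for any $2 < e' \leq n$ provides elements with nontrivial entries in higher columns, which therefore cannot be expressed as a finite positive combination of vertices of $C_e$, so $C_e$ may in fact equal the cone of dimension-$\leq 2$ tables. This wrinkle is the only subtle point; I expect the main obstacle to be verifying that no entries ``leak'' outside the expected columns under the embedding, so that the vertex characterization and the non-generation argument of Theorem~\ref{4.17} and the preceding theorem pull back cleanly to $C_e$.
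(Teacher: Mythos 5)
Your approach to parts (1) and (2) is exactly the paper's: the proof given there is the one-liner "Use Theorem \ref{4.17} and Lemma \ref{3.7}," and your reduction---Lemma \ref{3.7} plus Auslander--Buchsbaum to pass from $R$-modules with $\dim \leq e$, $\depth \geq e-1$ to $S_e$-modules of projective dimension $\leq 1$, then Theorem \ref{4.17} over $S_e$ and Lemma \ref{2.1} to carry the generating set and vertex set through the injective embedding---is the intended argument. The concern about entries "leaking" outside the first $e+1$ columns is already settled by the remarks following Lemma \ref{3.7}, so there is no obstacle there.

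Your suspicion about part (3) is well founded, but the attempted resolution is wrong. The inclusion $C_{e'} \subseteq C_e$ for $2 < e' \leq n$ and $e = 2$ that you invoke is false: $C_{e'}$ is generated by tables of modules of dimension up to $e'$, and such a module need not have dimension $\leq 2$, so those tables do not lie in $C_2$ at all. Consequently there is no way to manufacture an indecomposable element of $C_2$ from higher $C_{e'}$. In fact, for $e = 2$ the conclusion of (3) simply fails: over $S_2$ the set $A_3$ in the analysis leading to Theorem \ref{4.17} is empty (it requires $2 \leq s \leq e-1$), so the generating set $A_1 \cup A_2 \cup A_4$ coincides with the vertex set and $C_2$ \emph{is} generated by its vertices, consistent with the Smirnov--De Stefani theorem in dimension at most $2$. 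So the corollary as printed carries a small slip---the hypothesis in (3) should read "$e > 2$" rather than "$n > 2$"---and your instinct to flag the discrepancy was right, but the $e = 2$, $n > 2$ case is not salvageable; (3) genuinely does not hold there.
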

\begin{proof}Use Theorem \ref{4.17} and Lemma \ref{3.7}.
\end{proof}
Finally, we can describe the cone $C_s$ of local cohomology tables of saCM modules; by Proposition \ref{3.6} every local cohomology table of an saCM module decomposes into that of its dimension factors, which are almost Cohen-Macaulay, so $C_s=\sum^n_{i \geq 0}C_i$. By results in \cite{de2021decomposition}, $C_0 \subset C_1$, and $C_1$ is generated by its vertex set $\{k(a),k[x](a), a \in \mathbb{Z}\}$.
\begin{definition}\label{definition-AM,A'M}
Define $A'_M=\cup_{2 \leq e \leq n}B'_{M,e}\cup\{k(a),k[x](a), a \in \mathbb{Z}\}$, $A_M=\cup_{2 \leq e \leq n}B_{M,e}\cup\{k(a),k[x](a), a \in \mathbb{Z}\}$, where $B'_{M,e},B_{M,e}$ follows from Definition \ref{definition-BMB'M}.    
\end{definition}
Note that the $R$-module $S_e$ lies in both $B_{M,e}$ and $B_{M,e+1}$; the reason is that $S_e$ can be viewed as an $S_{e+1}$-module and in this case $S_e$ is pure with degree sequence $d_0=0<d_1=1$ and its projective dimension is 1. We have the following description of $C_s$:
\begin{theorem}\label{4.19}
\item (1) $\{H^{\bullet}(M), M \in A'_M\}$ generates $C_s$.
\item (2) $\{H^{\bullet}(M), M \in A_M\}$ is the vertex set of $C_s$.
\item (3) If $n > 2$, not every element in the cone $C_s$ is a positive linear combination of the extremal rays.
\end{theorem}
\begin{proof}(1) is trivial; the union of the generating sets of cones generates the sum of the cones. For (2) and (3), we pick an element in the generating set $\{H^{\bullet}(M_i), M_i \in A'_M\}$. If it is of the form $k(a)$ or $k[x](a), a \in \mathbb{Z}$ then it is already extremal. If it is of the form $H^{\bullet}(M_i), M_i \in B'_{M,e}$, then this local cohomology table only has two nonvanishing terms, that is, $H^{e}_{\mathfrak{m}}(M_i)$ and $H^{e-1}_{\mathfrak{m}}(M_i)$. So if it decomposes into some tables of the form $H^{\bullet}(M_i), M_i \in A'_M$ then these tables also have zero local cohomologies except for the $e$-th and the $(e-1)$-th local cohomology, which implies that $M_i \in B'_{M,e}$. So a decomposition of a generator in $C_e$ also lies in $C_e$, so a vertex of $C_s$ which lies in $C_e$ is also a vertex in $C_e$, which implies (2) and (3).
\end{proof}

\section{The $\Gamma$ functor}\label{section-Gamma functor}
Let $M$ be a finitely generated graded $R$-module. Recall that the module of global sections of $M$ is
$$\Gamma(M)=\oplus_{t \in \mathbb{Z}}H^{0}(\Proj(R), \tilde{M}(t)).$$
We can view $\Gamma$ as a functor from the category of graded $R$-modules to itself. One might hope that it maps the category of finitely generated graded $R$-modules to itself, but this is not true in general. However, if we focus on the problem of the decomposition of local cohomology tables and apply Lemma \ref{3.3} and Lemma \ref{3.4}, we may always assume that $\depth(M) \geq 1$, and the maximal submodule of dimension at most 1 is $M_1=0$. 
\begin{assumption}\label{assumptions-depth1,noM1}
We say $M$ satisfies this assumption if $\depth(M) \geq 1$ and the maximal submodule of dimension at most $1$ is $M_1=0$.    
\end{assumption}
We see if $\depth(M) \geq 2$ then $M$ satisfies Assumption \ref{assumptions-depth1,noM1}. Also, by equation \ref{equation-relation of Gamma}, $\depth(M) \geq 1$ if and only if $M$ embeds into $\Gamma(M)$ through the natural map, and $\depth(M) \geq 2$ if and only if $M = \Gamma(M)$ through this embedding.
\begin{proposition}\label{5.1}Assume $M$ satisfies Assumption \ref{assumptions-depth1,noM1}, then $H^1_{\mathfrak{m}}(M)$ has finite length.
\end{proposition}
\begin{proof}We may assume $\depth(M)=1$, otherwise $H^1_{\mathfrak{m}}(M)=0$. The condition $l(H^1_{\mathfrak{m}}(M)) < \infty$ is equivalent to $l(\Ext^{n-1}_R(M,R)) < \infty$ by local duality. Since the module $\Ext^{n-1}_R(M,R)$ is finitely generated, this module has finite length if and only if Ext$^{n-1}_R(M,R)_{\mathfrak{p}}=0$, $\forall$ ht ${\mathfrak{p}}=n-1$, which just means Ext$^{n-1}_{R_{\mathfrak{p}}}(M_{\mathfrak{p}},R_{\mathfrak{p}})=0$, $\forall$ ht ${\mathfrak{p}}=n-1$. Now the ring $R_{\mathfrak{p}}$ is a regular ring of dimension $n-1$, so apply the local duality on $R_{\mathfrak{p}}$ to get the equivalent condition $H^0_{{\mathfrak{p}}R_{\mathfrak{p}}}(M_{\mathfrak{p}})=0$, $\forall$ ht $\mathfrak{p}=n-1$. Equivalently, $\mathfrak{p} \notin $Ass$(M_{\mathfrak{p}})$, $\forall$ ht ${\mathfrak{p}}=n-1$, or $\mathfrak{p} \notin$ Ass$M$, $\forall$ ht $\mathfrak{p}=n-1$. This is true if and only if $M$ has no submodule of dimension 1.
\end{proof}
Below are some characterizations of the functor $\Gamma$.
\begin{proposition}[Universal property]Denote the natural map $M \to \Gamma(M)$ by $i$. Let $M$ and $N$ be two finitely generated graded $R$-modules, where $\depth(M)\geq 1$, $\depth(N)\geq 2$. Suppose $f:M \to N$ is an embedding, then there exists a unique embedding $f':\Gamma(M) \to N$ such that $f=f'i$.
\end{proposition}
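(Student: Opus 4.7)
The plan is to build $f'$ from the functoriality of $\Gamma$ together with the fact that $\depth(N)\ge 2$ makes the natural map $i_N\colon N\to\Gamma(N)$ a graded isomorphism. The key inputs are the exact sequence
\[
0\to H^0_{\mathfrak m}(N)\to N\xrightarrow{i_N}\Gamma(N)\to H^1_{\mathfrak m}(N)\to 0
\]
recalled in the introduction (both outer terms vanish by the depth hypothesis on $N$), the left exactness of $\Gamma$ (it is $\bigoplus_t H^0(\Proj R,\widetilde{(-)}(t))$), and the observation that $\depth(M)\ge 1$ forces $i_M\colon M\hookrightarrow\Gamma(M)$ to be injective with cokernel $H^1_{\mathfrak m}(M)$.

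First I would define $f':=i_N^{-1}\circ\Gamma(f)\colon\Gamma(M)\to\Gamma(N)\cong N$. Naturality of $i$ in the diagram
\[
\xymatrix{
M\ar[r]^{f}\ar[d]_{i_M} & N\ar[d]^{i_N} \\
\Gamma(M)\ar[r]_{\Gamma(f)} & \Gamma(N)
}
\]
gives $i_N\circ f=\Gamma(f)\circ i_M$, so $f'\circ i_M=i_N^{-1}\circ\Gamma(f)\circ i_M=i_N^{-1}\circ i_N\circ f=f$, establishing the required factorization. Injectivity of $f'$ then follows since $\Gamma$ is left exact (applied to $0\to M\to N\to N/M\to 0$, which is exact because $f$ is an embedding), and the isomorphism $i_N^{-1}$ preserves injectivity.

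For uniqueness, suppose $f'_1,f'_2\colon\Gamma(M)\to N$ both satisfy $f'_ji_M=f$. Their difference $g:=f'_1-f'_2$ vanishes on $i_M(M)$, hence factors through the cokernel $\Gamma(M)/i_M(M)\cong H^1_{\mathfrak m}(M)$. But any local cohomology module is $\mathfrak m$-torsion, while $\depth(N)\ge 2\ge 1$ implies $H^0_{\mathfrak m}(N)=0$, so $N$ has no nonzero $\mathfrak m$-torsion submodule. Therefore every graded homomorphism $H^1_{\mathfrak m}(M)\to N$ is zero, forcing $g=0$ and $f'_1=f'_2$.

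I do not expect any serious obstacle here; the only thing to be careful about is distinguishing the two roles of $i$ (on $M$ and on $N$) and verifying that the identification $N\cong\Gamma(N)$ used to land $f'$ in $N$ rather than $\Gamma(N)$ is precisely the natural map $i_N$, so that the factorization identity $f'\circ i_M=f$ holds on the nose rather than only up to the isomorphism $i_N$.
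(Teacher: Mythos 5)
Your construction of $f'$ is the same as the paper's: both set $f'=\Gamma(f)$ under the identification $\Gamma(N)\cong N$ (which you justify, correctly, from the exact sequence and $\depth N\geq 2$), and both invoke left exactness of $\Gamma$ to get injectivity. Where you diverge is uniqueness: the paper applies $\Gamma$ to the factorization $f=f'i$ and uses $\Gamma(f')=f'$ together with $\Gamma(i)=\mathrm{id}_{\Gamma(M)}$, whereas you observe that the difference of two candidate factorizations kills $i_M(M)$, hence factors through $\Gamma(M)/M\cong H^1_{\mathfrak m}(M)$, which is $\mathfrak m$-torsion, while $\depth N\geq 2$ forces $H^0_{\mathfrak m}(N)=0$, so $N$ admits no nonzero map from an $\mathfrak m$-torsion module. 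Your uniqueness argument is slightly more elementary and avoids having to justify the idempotency-type identity $\Gamma(i)=\mathrm{id}$; the paper's is more compactly functorial. Both are correct and the overall approach is the same.
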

\begin{proof}$\Gamma$ is left exact because sheafification, tensoring with $\mathcal{O}_{\mathbb{P}^{n-1}}(t)$ and $H^0$ are all left exact. Also, when depth$N \geq 2$, $\Gamma(N)=N$. So an embedding of modules $f:M \to N$ induces another embedding $\Gamma(f):\Gamma(M) \to N$. Let $\Gamma(f)=f'$. Suppose conversely we have $f=f'i$ for some embedding $f':\Gamma(M) \to N$, then $\Gamma(f)=\Gamma(f')\Gamma(i)$, but $\Gamma(f')=f'$ and $\Gamma(i) = \textup{id}_{\Gamma(M)}$, hence $f'=\Gamma(f)$ is unique.
\end{proof}
\begin{proposition}\label{5.3}Let $M$ and $N$ be two finitely generated graded $R$-modules such that $M$ embeds into $N$, $\depth(M) \geq 1$ and $\depth(N) \geq 2$. If $l(N/M)<\infty$, then $N=\Gamma(M)$.
\end{proposition}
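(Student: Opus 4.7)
The plan is to apply the universal property of $\Gamma$ stated just above this proposition to produce an embedding $\Gamma(M) \hookrightarrow N$, and then to show this embedding is surjective by a local cohomology argument. Since the inclusion $M \hookrightarrow N$ satisfies the hypotheses $\depth M \geq 1$ and $\depth N \geq 2$, the universal property yields a unique embedding $f' : \Gamma(M) \hookrightarrow N$ through which $M \hookrightarrow N$ factors. Identifying $\Gamma(M)$ with its image, we obtain $M \subseteq \Gamma(M) \subseteq N$, so $N/\Gamma(M)$ is a quotient of the finite-length module $N/M$ and hence itself has finite length. The task thus reduces to proving $N/\Gamma(M) = 0$.

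Next I would apply the long exact sequence in local cohomology to $0 \to \Gamma(M) \to N \to N/\Gamma(M) \to 0$. Its relevant fragment,
\[
H^0_{\mathfrak{m}}(N) \to H^0_{\mathfrak{m}}(N/\Gamma(M)) \to H^1_{\mathfrak{m}}(\Gamma(M)) \to H^1_{\mathfrak{m}}(N),
\]
has both outer terms vanishing because $\depth N \geq 2$; since $N/\Gamma(M)$ has finite length, $H^0_{\mathfrak{m}}(N/\Gamma(M)) = N/\Gamma(M)$. This produces an injection $N/\Gamma(M) \hookrightarrow H^1_{\mathfrak{m}}(\Gamma(M))$, and the proof reduces to verifying $H^1_{\mathfrak{m}}(\Gamma(M)) = 0$.

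The vanishing $H^1_{\mathfrak{m}}(\Gamma(M)) = 0$ (together with $H^0_{\mathfrak{m}}(\Gamma(M)) = 0$) is the well-known ``depth-$2$'' property of the global sections functor, and I would deduce it from the saturation identity $\widetilde{\Gamma(M)} = \tilde{M}$: this gives $\Gamma(\Gamma(M)) = \Gamma(M)$ with the natural map $\Gamma(M) \to \Gamma(\Gamma(M))$ identified with the identity. Feeding this into the canonical four-term exact sequence from the introduction, applied to $\Gamma(M)$ in place of $M$, yields
\[
0 \to H^0_{\mathfrak{m}}(\Gamma(M)) \to \Gamma(M) \xrightarrow{\mathrm{id}} \Gamma(M) \to H^1_{\mathfrak{m}}(\Gamma(M)) \to 0,
\]
which forces both endpoints to vanish. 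Combined with the previous step, this gives $N/\Gamma(M) = 0$, hence $N = \Gamma(M)$.

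The only genuinely nontrivial input is the saturation property $\Gamma \circ \Gamma \cong \Gamma$; every other step is a routine depth check or long exact sequence manipulation, so I do not anticipate any significant obstacle beyond clean bookkeeping of the identifications.
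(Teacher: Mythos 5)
Your proof is correct and follows essentially the same approach as the paper: embed $\Gamma(M)$ into $N$ via the universal property, then use $\depth \Gamma(M) \geq 2$ to force $N/\Gamma(M) = 0$. The only difference is presentational — the paper invokes the depth lemma to conclude $\depth(N/\Gamma(M)) \geq 1$, whereas you unwind the same fact through the local cohomology long exact sequence and additionally derive $H^0_{\mathfrak{m}}(\Gamma(M)) = H^1_{\mathfrak{m}}(\Gamma(M)) = 0$ from the saturation identity $\Gamma\circ\Gamma = \Gamma$, which the paper takes for granted.
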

\begin{proof}By the universal property $\Gamma(M)$ embeds into $N$. If $\Gamma(M) \neq N$, then by the depth lemma, $N/\Gamma(M)$ has depth at least 1, hence $l(N/\Gamma(M))=\infty$, hence $l(N/M)=\infty$, which is a contradiction.
\end{proof}
\begin{corollary}Let $M$ and $N$ be two finitely generated graded $R$-modules such that $M$ embeds into $N$, $\depth(M) \geq 1$ and $\depth(N) \geq 2$. Let $M^{sat}=M:_{N}\mathfrak{m}^{\infty}$. Then $\Gamma(M) = M^{sat}$.
\end{corollary}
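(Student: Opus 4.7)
The plan is to verify the hypotheses of Proposition~\ref{5.3} with $N$ replaced by $M^{sat}$. So I need to check three things: (a) $M$ embeds in $M^{sat}$ (automatic from the definition $M^{sat}=M:_N\mathfrak{m}^\infty$), (b) $l(M^{sat}/M)<\infty$, and (c) $\depth(M^{sat})\geq 2$. Given these, Proposition~\ref{5.3} applied to the embedding $M\hookrightarrow M^{sat}$ yields $\Gamma(M)=M^{sat}$.

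For (b): the quotient $M^{sat}/M$ is a submodule of $N/M$, hence finitely generated. Since every element of $M^{sat}/M$ is annihilated by some power of $\mathfrak{m}$ by construction, a finite set of generators is annihilated by a common power $\mathfrak{m}^k$, so $M^{sat}/M$ has finite length.

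The crux is (c), and the idea is that saturating removes all $\mathfrak{m}$-torsion from the quotient. Explicitly, I would show $H^0_{\mathfrak{m}}(N/M^{sat})=0$: if $\bar x\in N/M^{sat}$ satisfies $\mathfrak{m}^k\bar x=0$, then $\mathfrak{m}^k x\subset M^{sat}$, so some further power $\mathfrak{m}^{k+l}x$ lies in $M$, which places $x\in M^{sat}$, i.e.\ $\bar x=0$. Hence either $N/M^{sat}=0$ (in which case $M^{sat}=N$ and $\depth M^{sat}\geq 2$ trivially) or $\depth(N/M^{sat})\geq 1$. Now apply the depth lemma to
\begin{equation*}
0\to M^{sat}\to N\to N/M^{sat}\to 0,
\end{equation*}
which gives $\depth(M^{sat})\geq \min\{\depth N,\depth(N/M^{sat})+1\}\geq \min\{2,2\}=2$.

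With (a), (b), (c) in hand, Proposition~\ref{5.3} gives $M^{sat}=\Gamma(M)$. The only non-routine step is (c), and the mild obstacle there is recognizing that the saturation in $N$ is precisely the submodule whose cokernel inside $N$ has no $\mathfrak{m}$-torsion; once that is observed, the depth lemma closes the argument.
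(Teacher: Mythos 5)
Your proposal is correct and matches the paper's argument: both identify $M^{sat}/M$ with $H^0_{\mathfrak{m}}(N/M)$ (finite length), observe that $N/M^{sat}$ has no $\mathfrak{m}$-torsion so its depth is at least $1$, apply the depth lemma to $0\to M^{sat}\to N\to N/M^{sat}\to 0$ to get $\depth(M^{sat})\geq 2$, and then invoke Proposition~\ref{5.3}. The only difference is that you unwind the identification $H^0_{\mathfrak{m}}(N/M)=M^{sat}/M$ by hand where the paper treats it as immediate.
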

\begin{proof}By construction, $H^0_{\mathfrak{m}}(N/M)=M^{sat}/M$, $(N/M)/H^0_{\mathfrak{m}}(N/M)=N/M^{sat}$, so $\depth(N/M^{sat}) \geq 1$. And $\depth(N) \geq 2$, hence we can apply the depth lemma to get $\depth(M^{sat}) \geq 2$, and $M^{sat}/M$ is of finite length. By Proposition \ref{5.3}, $\Gamma(M) = M^{sat}$.
\end{proof}
\begin{corollary}\label{5.5}Assume $M$ satisfies Assumption \ref{assumptions-depth1,noM1}. Then $\Gamma(M)$ is also finitely generated.
\end{corollary}
\begin{proof}If $\depth(M) > 0$, then $H^0_{\mathfrak{m}}(M)=0$, so $0 \to M \to \Gamma(M) \to H^1_{\mathfrak{m}}(M) \to 0$ is exact. Now $M$ is finitely generated, and $H^1_{\mathfrak{m}}(M)$ is of finite length by Proposition \ref{5.1}, hence $H^1_{\mathfrak{m}}(M)$ is also finitely generated, so $\Gamma(M)$ is also finitely generated.
\end{proof}
In summary, to find a decomposition of $H^{\bullet}(M)$ for a general module $M$, we can consider two exact sequences $0 \to H^0_{\mathfrak{m}}(M) \to M \to M/H^0_{\mathfrak{m}}(M) \to 0$ and $0 \to M_1 \to M \to M/M_1 \to 0$. The long exact sequences of local cohomology both have $0$ connecting map, so they induce decompositions of the local cohomology table of $M$. Hence, we can assume $M$ satisfies Assumption \ref{assumptions-depth1,noM1}. In this case by Corollary \ref{5.5}, $\Gamma(M)$ is finitely generated, of depth at least 2 which contains $M$ such that $H^{\bullet}(M)$ is equal to $H^{\bullet}(\Gamma(M))$ at position $i \geq 2$ and equal to the Hilbert function of $\Gamma(M)/M$ at position 1 which has finite length. So we need to study the local cohomology tables of modules of depth at least 2, and their quotients of finite length.

\section{Decomposition in dimension 3}\label{section-decomposition dim3}
In this section, we assume $M$ satisfies Assumption \ref{assumptions-depth1,noM1}, and analyze whether the decomposition of $H^{\bullet}(\Gamma(M))$ induces that of $H^{\bullet}(M)$. In dimension 2, this is the case, but things get complicated in dimension 3. From now on, we assume $n$ = 3, that is, $R$ is a polynomial ring over 3 variables.
Let $M, \Gamma$ be two finitely generated graded $R$-modules such that $M \subset \Gamma$. Take another submodule $F$ of $\Gamma$. Then we have a $3 \times 3$ exact diagram
\begin{center}\label{diagram 2}
\leavevmode
\xymatrix@R=1em{
  & & 0         \ar[d]& 0        \ar[d]& 0          \ar[d]& \\
C_1\ar[d]^f&0  \ar[r]& M\cap F   \ar[r]\ar[d]& M        \ar[r]\ar[d]& M/M\cap F  \ar[r]\ar[d]& 0\\
C_2\ar[d]^{f'}&0  \ar[r]& F         \ar[r]\ar[d]& \Gamma   \ar[r]\ar[d]& \Gamma/F   \ar[r]\ar[d]& 0\\
C_3&0  \ar[r]& F/M\cap F \ar[r]\ar[d]& \Gamma/M \ar[r]\ar[d]& \Gamma/(M+F) \ar[r]\ar[d]& 0.\\
  & & 0         & 0        & 0          & \\
 & & D_1\ar[r]^g & D_2 \ar[r]^{g'} & D_3 & \\
}

Diagram 1
\end{center}
This diagram contains 3 horizontal short exact sequences, 3 vertical short exact sequences, and 4 morphisms between these 6 exact sequences. Denote the 3 horizontal short exact sequences from top to bottom by $C_1,C_2,C_3$ and 3 vertical ones from left to right $D_1,D_2,D_3$. The four morphism are $f:C_1 \to C_2$, $f':C_2 \to C_3$, $g:D_1 \to D_2$ and $g':D_2 \to D_3$. Applying the functors $\{H^i_{\mathfrak{m}}(\bullet)\}_{0 \leq i \leq 3}$ to these 4 morphisms of complexes, we get 4 chain maps between long exact sequences which consist of 12 $R$-linear maps between local cohomology modules. We use $H(C_i),H(D_i),1 \leq i \leq 3; H(f),H(f'),H(g),H(g')$ for the induced long exact sequences and chain maps. The maps in $H(f),H(f'),H(g),H(g')$ are denoted by $f_i,f'_i,g_i,g'_i, 1 \leq i \leq 12$ respectively. For example, $H(f):H(C_1) \to H(C_2)$ is given by the following chain map $f_i, 1 \leq i \leq 12$. We can get $f'_i,g_i,g'_i$ similarly.

\xymatrix@C=2em{
0 \ar[r]& H^0_{\mathfrak{m}}(M\cap F) \ar[r]\ar[d]^{f_1}& H^0_{\mathfrak{m}}(M) \ar[r]\ar[d]^{f_2}& H^0_{\mathfrak{m}}(M/M\cap F) \ar[r]\ar[d]^{f_3}& H^1_{\mathfrak{m}}(M\cap F) \ar[d]^{f_4}\\
0 \ar[r]& H^0_{\mathfrak{m}}(F) \ar[r]& H^0_{\mathfrak{m}}(\Gamma) \ar[r]& H^0_{\mathfrak{m}}(\Gamma/F) \ar[r]& H^1_{\mathfrak{m}}(F) \\
\ar[r]& H^1_{\mathfrak{m}}(M) \ar[r]\ar[d]^{f_5}& H^1_{\mathfrak{m}}(M/M\cap F) \ar[r]\ar[d]^{f_6}& H^2_{\mathfrak{m}}(M\cap F) \ar[r]\ar[d]^{f_7}& H^2_{\mathfrak{m}}(M)\ar[d]^{f_8}\\
\ar[r]& H^1_{\mathfrak{m}}(\Gamma) \ar[r]& H^1_{\mathfrak{m}}(\Gamma/F) \ar[r]& H^2_{\mathfrak{m}}(F) \ar[r]& H^2_{\mathfrak{m}}(\Gamma)\\
}

\xymatrix@C=2em{
\ar[r]& H^2_{\mathfrak{m}}(M/M\cap F) \ar[r]\ar[d]^{f_9}& H^3_{\mathfrak{m}}(M\cap F) \ar[r]\ar[d]^{f_{10}}& H^3_{\mathfrak{m}}(M) \ar[r]\ar[d]^{f_{11}}& H^3_{\mathfrak{m}}(M/M\cap F) \ar[r]\ar[d]^{f_{12}}& 0\\
\ar[r]& H^2_{\mathfrak{m}}(\Gamma/F) \ar[r]& H^3_{\mathfrak{m}}(F) \ar[r]& H^3_{\mathfrak{m}}(\Gamma) \ar[r]& H^3_{\mathfrak{m}}(\Gamma/F) \ar[r]& 0.\\
}

We prove a general decomposition principle, which allows us to decompose $H^{\bullet}(M)$ as the sum of local cohomology tables of a submodule and a quotient module of $M$. Note that Assumption \ref{assumptions-depth1,noM1} implies that $\Gamma(M)$ is finitely generated.
\begin{proposition}[General decomposition principle]\label{6.1}Given a $3 \times 3$ diagram as above, and suppose it satisfies the following three conditions:
\item (a) $\depth(F) \geq 2$, $\depth(\Gamma/F) \geq 2$, $\depth(\Gamma) \geq 2$.
\item (b) $\Gamma=\Gamma(M)$. In particular, $l(\Gamma/M)<\infty$.
\item (c) The connecting homomorphism $H^2_{\mathfrak{m}}(\Gamma/F) \to H^3_{\mathfrak{m}}(F)$ is 0.

Then the following three properties hold.
\item (1) $H^{\bullet}(\Gamma)=H^{\bullet}(F)+H^{\bullet}(\Gamma/F)$.
\item (2) $F=\Gamma(M\cap F)$ and $\Gamma/F=\Gamma(M/M\cap F)$.
\item (3) $H^{\bullet}(M)=H^{\bullet}(M\cap F)+H^{\bullet}(M/M\cap F)$.
\end{proposition}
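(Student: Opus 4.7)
The plan is to establish the three conclusions in order: first (4) directly from the long exact sequence of the middle row, then (5) via Proposition \ref{5.3}, and finally (6) by comparing the long exact sequences of the top and middle rows through the morphism $f$.

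For (4), I would examine the three connecting maps $H^i_{\mathfrak{m}}(\Gamma/F) \to H^{i+1}_{\mathfrak{m}}(F)$ for $i = 0, 1, 2$ associated with the middle row $0 \to F \to \Gamma \to \Gamma/F \to 0$. The depth hypotheses $\depth F, \depth(\Gamma/F) \geq 2$ force the source or target to vanish for $i = 0, 1$, and hypothesis (3) takes care of $i = 2$. The long exact sequence therefore splits into short exact sequences, which gives (4) term by term.

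For (5), I would apply Proposition \ref{5.3} twice. From the bottom row of Diagram 2, $F/(M \cap F)$ embeds into $\Gamma/M$, which has finite length by (2); likewise $(\Gamma/F)/(M/(M\cap F)) \cong \Gamma/(M + F)$ is a quotient of $\Gamma/M$, hence of finite length. The depth lemma applied to $0 \to M \cap F \to F \to F/(M \cap F) \to 0$, together with $\depth F \geq 2$, yields $\depth(M \cap F) \geq 1$; an analogous argument gives $\depth(M/(M \cap F)) \geq 1$. Proposition \ref{5.3} then supplies both equalities $F = \Gamma(M \cap F)$ and $\Gamma/F = \Gamma(M/(M \cap F))$.

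For (6), I would show that each connecting map $\delta_i \colon H^i_{\mathfrak{m}}(M/(M \cap F)) \to H^{i+1}_{\mathfrak{m}}(M \cap F)$ in the long exact sequence of the top row vanishes. The morphism $f$ of short exact sequences produces a commutative ladder between the top and middle long exact sequences, in which $\delta_i$ sits in a square with the middle-row connecting map $\delta'_i$. The key point is that by (5) the cokernels $F/(M \cap F)$ and $\Gamma/(M + F)$ have finite length, so their higher local cohomology vanishes, and hence the vertical comparison maps $H^j_{\mathfrak{m}}(M \cap F) \to H^j_{\mathfrak{m}}(F)$ and $H^j_{\mathfrak{m}}(M/(M \cap F)) \to H^j_{\mathfrak{m}}(\Gamma/F)$ are isomorphisms for $j \geq 2$. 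Combined with the vanishing $\delta'_i = 0$ established in (4), commutativity of the ladder forces $\delta_i = 0$ for $i \geq 1$; the case $i = 0$ is immediate from $\depth(M/(M \cap F)) \geq 1$. The main obstacle is the bookkeeping in this ladder: one must verify carefully which pair of vertical arrows around each $\delta_i$ is an isomorphism, after which the diagram chase is formal.
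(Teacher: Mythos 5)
Your proof is correct. Parts (4) and (5) follow essentially the same route as the paper: for (4) the depth conditions kill $H^0_{\mathfrak m}$ and $H^1_{\mathfrak m}$ of $\Gamma/F$ so only the $i=2$ connecting map needs hypothesis (3), and for (5) you use the bottom row $C_3$ to see that $F/(M\cap F)$ and $\Gamma/(M+F)$ have finite length and then invoke Proposition \ref{5.3}.

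Your argument for (6) takes a slightly different route from the paper's. The paper observes that, thanks to (4) and the isomorphisms $H^i_{\mathfrak m}(M)\cong H^i_{\mathfrak m}(\Gamma)$ (and likewise for $M\cap F$, $F$ and $M/(M\cap F)$, $\Gamma/F$) in degrees $i\geq 2$, the tables automatically add in those degrees, and then handles the one remaining degree $i=1$ directly: by (5) the $H^1_{\mathfrak m}$'s are exactly the modules $F/(M\cap F)$, $\Gamma/M$, $\Gamma/(M+F)$, and the bottom row $C_3$ gives the additivity of Hilbert series. You instead prove the equivalent (and slightly stronger-looking) statement that every connecting map $\delta_i$ of the top row vanishes, by chasing the ladder $f\colon C_1\to C_2$ and using that $g_i$ and $h_{i+1}$ are isomorphisms in degrees $\geq 2$ and $\delta'_i=0$ from (4). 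Both arguments are correct and comparable in length; yours makes the vanishing of the connecting homomorphisms fully explicit (which is what the phrase "induces a decomposition" in the paper always abbreviates), while the paper's computation via Hilbert series of the $H^1$ term avoids the diagram chase by delegating the work to $C_3$. Your remark that the depth of $M\cap F$ and $M/(M\cap F)$ is at least $1$ is needed for Proposition \ref{5.3} and also disposes of $\delta_0$; it follows at once from the depth lemma (or just from the fact that a submodule of a module of positive depth has positive depth), as you say.
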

\begin{proof}
\item (1) Consider the long exact sequence $H(C_2)$. By condition (c), it decomposes into 2 short exact sequences, hence the equality holds.
\item (2) By the short exact sequence at the bottom, $\Gamma/M$ is of finite length implies that $F/M\cap F$ and $\Gamma/M+F$ are of finite length. So by condition (a), (2) holds.
\item (3) By (1) it suffices to decompose $H^1_{\mathfrak{m}}$. But by (2), $H^1_{\mathfrak{m}}(M)=\Gamma/M$, $H^1_{\mathfrak{m}}(M\cap F)=F/M\cap F$, $H^1_{\mathfrak{m}}(M/M\cap F)=\Gamma/(M+F)$. So by the short exact sequence $C_3$ at the bottom, $H^1_{\mathfrak{m}}$ also decomposes.
\end{proof}
The general decomposition principle tells us that if we can find a submodule $F$ satisfying (a)-(c), then $H^{\bullet}(M)$ decomposes.
\begin{corollary}We assume (b) of $\ref{6.1}$ holds in diagram $2$. If $\Gamma$ is decomposable, then $H^{\bullet}(M)$ decomposes nontrivially into the cohomology table of two nonzero modules.
\end{corollary}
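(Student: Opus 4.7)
The plan is to apply Proposition \ref{6.1} directly, using the given decomposition of $\Gamma$ to produce the submodule $F$. Since $\Gamma$ is decomposable, write $\Gamma = F \oplus F'$ with both summands nonzero, and view $F$ as a submodule of $\Gamma$ so that $\Gamma/F \cong F'$. Condition (2) of Proposition \ref{6.1} is given by hypothesis, so it suffices to verify conditions (1) and (3).

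For condition (1), since $\Gamma = \Gamma(M)$, the module $\Gamma$ is saturated, i.e., $H^0_{\mathfrak{m}}(\Gamma) = H^1_{\mathfrak{m}}(\Gamma) = 0$, and in particular $\depth \Gamma \geq 2$. Because depth of a direct sum is the minimum of the depths of the summands, both $F$ and $F' \cong \Gamma/F$ then have depth at least $2$ as well. For condition (3), the short exact sequence
$$0 \to F \to \Gamma \to \Gamma/F \to 0$$
is split by construction, so the functorial long exact sequence of local cohomology splits into short exact sequences; every connecting homomorphism vanishes, and in particular $H^2_{\mathfrak{m}}(\Gamma/F) \to H^3_{\mathfrak{m}}(F)$ is zero. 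With (1), (2), (3) in place, Proposition \ref{6.1}(6) gives
$$H^{\bullet}(M) = H^{\bullet}(M \cap F) + H^{\bullet}(M/M \cap F).$$

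Finally, it remains to argue that this is a genuine (nontrivial) decomposition, i.e., that neither summand is zero. By Proposition \ref{6.1}(5) we have $F = \Gamma(M \cap F)$ and $\Gamma/F = \Gamma(M/M \cap F)$; since $F$ and $\Gamma/F$ are both nonzero by our choice of a nontrivial direct sum decomposition of $\Gamma$, the modules $M \cap F$ and $M/M \cap F$ are both nonzero. As each is a finitely generated graded $R$-module, its top local cohomology is nonzero, and hence its local cohomology table is nonzero. This completes the proof; no step is a real obstacle, the essential content being the observation that a splitting of the short exact sequence $0 \to F \to \Gamma \to \Gamma/F \to 0$ automatically supplies the vanishing of connecting maps required by hypothesis (3) of the general decomposition principle.
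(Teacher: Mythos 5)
Your proof is correct and takes essentially the same approach as the paper's (very terse) proof: take $F$ to be a nonzero direct summand with $\Gamma = F \oplus \Gamma/F$, verify conditions (1) and (3) of Proposition \ref{6.1} (with (1) coming from $\depth\Gamma \geq 2$ and depth-of-a-direct-sum, (3) from splitness), and apply the general decomposition principle. Your additional argument that the decomposition is nontrivial is a sensible supplement that the paper leaves implicit.
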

\begin{proof}Suppose $F$ is a nonzero direct summand, then $\Gamma=F\oplus \Gamma/F$. In this case the condition (a) and (c) are satisfied, so by (3), $H^{\bullet}(M)=H^{\bullet}(M\cap F)+H^{\bullet}(M/M\cap F)$. By (3), $F, \Gamma/F \neq 0$ implies $M\cap F, M/M\cap F \neq 0$.
\end{proof}
\begin{corollary}We assume (b) of $\ref{6.1}$ holds in diagram $2$. If $H^2_{\mathfrak{m}}(M)=0$ or $H^3_{\mathfrak{m}}(M)=0$, then $H^{\bullet}(M)$ decomposes as a sum of $H^{\bullet}(N_i)$ such that $\Gamma(N_i)$ is cyclic.
\end{corollary}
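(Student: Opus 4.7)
The plan is to apply the General Decomposition Principle (Proposition \ref{6.1}) iteratively after first translating the vanishing hypotheses on $H^i_{\mathfrak{m}}(M)$ into structural statements about $\Gamma = \Gamma(M)$. Since assumption (2) forces $l(\Gamma/M) < \infty$, we have $H^i_{\mathfrak{m}}(\Gamma) \cong H^i_{\mathfrak{m}}(M)$ for every $i \geq 2$; combined with $\depth \Gamma \geq 2$, this splits into two regimes. If $H^2_{\mathfrak{m}}(M) = 0$, then $\depth \Gamma \geq 3 = \dim R$, so by Auslander--Buchsbaum $\Gamma$ is free. If $H^3_{\mathfrak{m}}(M) = 0$, then $\dim \Gamma \leq 2$ and $\Gamma$ is Cohen--Macaulay of dimension $2$.

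In the first regime, write $\Gamma = \bigoplus_{i=1}^{k} R(-a_i)$ and iteratively invoke Corollary 6.2, taking $F = R(-a_1)$ at each stage. Part (5) of Proposition \ref{6.1} identifies $\Gamma(M \cap F) = F$ as cyclic and $\Gamma(M/M\cap F) = \Gamma/F$ as still free, while assumption (2) is preserved because $F/(M \cap F)$ and $\Gamma/(M+F)$ are quotients of $\Gamma/M$, hence of finite length. Induction on the rank of $\Gamma$ completes this case. In the second regime, the goal is to produce a filtration $0 = \Gamma_0 \subset \Gamma_1 \subset \cdots \subset \Gamma_k = \Gamma$ whose successive quotients are cyclic and whose terms $\Gamma_i$ all satisfy $\depth \Gamma_i \geq 2$. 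Condition (3) of Proposition \ref{6.1} is automatic since $\dim \Gamma \leq 2$ forces $H^3_{\mathfrak{m}}(F) = 0$ throughout, and then iteratively applying the proposition with $F = \Gamma_1$ transfers the decomposition to $M$, with each $\Gamma(M_i)$ identified via part (5) with the cyclic quotient $\Gamma_i/\Gamma_{i-1}$.

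The main obstacle is constructing such a filtration in the second regime, since a Cohen--Macaulay module of dimension $2$ over $R = k[x_1,x_2,x_3]$ need not split as a direct sum of cyclic modules. My approach would be to select a height-$1$ associated prime $\mathfrak{p}$ of $\Gamma$ (principal in the UFD $R$) and an element $x \in \Gamma$ with $\ann(x) = \mathfrak{p}$, so that $\Gamma_1 = Rx \cong R/\mathfrak{p}(-a)$ is cyclic and Cohen--Macaulay of dimension $2$; the delicate point is then to arrange $\depth(\Gamma/\Gamma_1) \geq 2$, which may require a more careful choice of $x$ or an enlargement absorbing any torsion introduced by the quotient. An alternative route, sidestepping this technicality, is to reduce to dimension $2$ via Lemma \ref{3.7} and invoke the Smirnov--De Stefani theorem, observing that every generator appearing there---including $(x_1,x_2)^t$---has cyclic module of global sections $\Gamma = S$, so the resulting decomposition automatically satisfies the desired conclusion.
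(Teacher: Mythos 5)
Your first regime ($H^2_{\mathfrak m}(M)=0$) is exactly the paper's proof: $\Gamma$ is free by Auslander--Buchsbaum, and you iterate the decomposition principle over a direct-sum decomposition of $\Gamma$ into cyclic free summands. For the second regime ($H^3_{\mathfrak m}(M)=0$), your ``alternative route'' is precisely what the paper does: it simply observes that $\Gamma$ is Cohen--Macaulay of dimension $2$ and reduces to the dimension-$\leq 2$ classification of Smirnov--De Stefani via Lemma~\ref{3.7}, noting that each generator there has cyclic $\Gamma$. Your instinct to distrust the filtration idea was correct and you should discard it outright: a Cohen--Macaulay module of dimension $2$ over $k[x_1,x_2,x_3]$ is a maximal Cohen--Macaulay module over a $2$-dimensional hypersurface $R/(f)$, and such modules generically admit no filtration $0=\Gamma_0\subset\cdots\subset\Gamma_k=\Gamma$ with cyclic successive quotients all of depth $\geq 2$; quotienting by a cyclic submodule typically destroys depth, and no ``careful choice'' of $x$ or enlargement will salvage this in general. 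One small point both you and the paper leave implicit: when you invoke the Smirnov--De Stefani decomposition in the second regime, you should note that under the standing assumption (2) (so $\depth M\geq 1$ and $H^1_{\mathfrak m}(M)$ of finite length), the summands $k(a)$ and $k[x](a)$ cannot appear (the former would contribute to $H^0$, the latter would make $H^1$ infinite), so only $S(a)$ and $(x_1,x_2)^t(a)$ occur and these indeed have cyclic module of global sections.
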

\begin{proof}If $H^3_{\mathfrak{m}}(M)=0$, then $\Gamma=\Gamma(M)$ is Cohen-Macaulay of dimension 2, then it reduces to the case in \cite{CAVIGLIA2021106635}. If $H^2_{\mathfrak{m}}(M)=0$, then $H^i_{\mathfrak{m}}(\Gamma)$ is zero except for $i=3$, so $\Gamma$ is free, hence it is a direct sum of free cyclic modules, so (a) and (c) hold for any free summand $F \subset \Gamma$. And (b) holds by assumption, so by Proposition \ref{6.1} we get (1)-(3) for $M,\Gamma,F$ and $H^{\bullet}(M)=H^{\bullet}(M\cap F)+H^{\bullet}(M/M\cap F)$. We may work with either $M \cap F$ or $M/M\cap F$ whenever $F$ or $\Gamma/F$ is decomposable and free, and use (2) and (3) repeatedly. Finally, we end up with $H^{\bullet}(N_i)$ such that $H^{\bullet}(M)=\sum_i H^{\bullet}(N_i)$ and $\Gamma(N_i)$ is an indecomposable direct summand of $\Gamma$, hence must be free cyclic.
\end{proof}
\begin{remark}The above two lemmas explain why things are different in dimension 2 and dimension 3. In dimension 2, $\Gamma$ is free because $\depth(\Gamma) \geq 2$, so it reduces to the case where $\Gamma$ is free of rank 1. In dimension 3 this is not always true.
\end{remark}
Pick a module $M$ of depth 1 without a dimension 1 submodule and let $\Gamma=\Gamma(M)$. In general it is hard to find a submodule of $\Gamma$ that satisfies both (a) and (c). There are two ways to approach this. The first way is to take $F$ to be a submodule of dimension 2; and the second way is to take $F$ to be a free submodule. In the first way, (c) is satisfied but the quotient $\Gamma/F$ may violate (a) because we may have $\depth(\Gamma/F) = 1$.

We start with some lemmas on $\Gamma$ where $\Gamma$ is a general finitely generated graded $R$-module.
\begin{lemma}\label{6.5}Let $\Gamma$ be a module of depth $2$. Then the maximal submodule of dimension at most $2$ is the torsion submodule $\Tor(\Gamma)$. If $\depth(\Gamma) \geq 2$, then $\Tor(\Gamma)$ is Cohen-Macaulay of dimension $2$.
\end{lemma}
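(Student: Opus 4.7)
The plan is to prove the two sentences in turn, using only that $R=k[x_1,x_2,x_3]$ is a three dimensional polynomial domain together with the depth lemma stated in Section~2.

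First I would establish the identification $\Gamma_2=\Tor(\Gamma)$ (where $\Gamma_2$ denotes the maximal submodule of dimension at most $2$). This does not actually require any depth hypothesis. Since $R$ is a three dimensional domain, for a homogeneous $m\in\Gamma$ the condition $m\in\Tor(\Gamma)$ is equivalent to $\ann_R(m)\neq 0$, which is equivalent to $\dim Rm=\dim R/\ann_R(m)\leq 2$. So $\Tor(\Gamma)$ is exactly the set of elements generating cyclic submodules of dimension at most $2$. On the other hand, any submodule $N\subseteq \Gamma$ with $\dim N\leq 2$ consists of elements $m$ with $\dim Rm\leq 2$, i.e.\ $N\subseteq \Tor(\Gamma)$, while conversely $\Tor(\Gamma)$ itself has dimension at most $2$ (being the union of cyclic submodules of dimension at most $2$, or equivalently, because every associated prime of $\Tor(\Gamma)$ is of the form $\ann(m)$ with $\ann(m)\neq 0$, hence has height $\geq 1$). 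Therefore $\Tor(\Gamma)$ is the unique maximal submodule of dimension at most $2$.

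For the second sentence, assume $\depth\Gamma\geq 2$ and consider the short exact sequence
\[
0\to\Tor(\Gamma)\to\Gamma\to\Gamma/\Tor(\Gamma)\to 0.
\]
The quotient is torsion free over the domain $R$, so for any nonzero $\bar m\in\Gamma/\Tor(\Gamma)$ one has $\ann(\bar m)=0$. Hence $\Ass(\Gamma/\Tor(\Gamma))\subseteq\{(0)\}$, and in particular $\mathfrak m\notin\Ass(\Gamma/\Tor(\Gamma))$, so (provided $\Gamma/\Tor(\Gamma)\neq 0$) $\depth\Gamma/\Tor(\Gamma)\geq 1$. Applying part (2) of the depth lemma yields
\[
\depth\Tor(\Gamma)\ \geq\ \min\{\depth\Gamma,\ \depth\Gamma/\Tor(\Gamma)+1\}\ \geq\ \min\{2,2\}=2.
\]
Since we already have $\dim\Tor(\Gamma)\leq 2$, we obtain $\depth\Tor(\Gamma)=\dim\Tor(\Gamma)=2$ whenever $\Tor(\Gamma)$ is nonzero, so $\Tor(\Gamma)$ is Cohen--Macaulay of dimension $2$. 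The two degenerate cases are handled trivially: if $\Gamma$ is itself torsion then $\Tor(\Gamma)=\Gamma$ already has $\depth\geq 2$ and $\dim\leq 2$; if $\Gamma$ is torsion free then $\Tor(\Gamma)=0$ and the claim is vacuous.

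There is no genuine obstacle here; the only point requiring slight care is verifying that a torsion free module over a domain has $(0)$ as its only associated prime, so that the depth lemma can be applied with the correct bound $\depth\Gamma/\Tor(\Gamma)\geq 1$. Once that is in hand, the argument is just depth bookkeeping.
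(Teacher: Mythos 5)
Your proof is correct and follows essentially the same route as the paper: identify $\Tor(\Gamma)$ with the maximal submodule of dimension at most $2$ using that $R$ is a $3$-dimensional domain, observe that the torsion-free quotient $Q=\Gamma/\Tor(\Gamma)$ has $H^0_{\mathfrak{m}}(Q)=0$ and hence depth at least $1$, and then apply the depth lemma to conclude $\depth\Tor(\Gamma)\geq 2$. You supply a bit more justification than the paper (e.g.\ spelling out why $\dim\Tor(\Gamma)\leq 2$ and noting the first sentence needs no depth hypothesis), but the underlying argument is the same.
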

\begin{proof}The maximal submodule of dimension at most 2 is generated by all $m \in \Gamma$ such that ann$_R(m) \neq 0$, so it is $\Tor(\Gamma)$. We have an exact sequence $0 \to \Tor(\Gamma) \to \Gamma \to Q \to 0$ where $Q=\Gamma/\Tor(\Gamma)$. By assumption depth($\Gamma$)$\geq$ 2. Also, $H^0_{\mathfrak{m}}(Q)=0$ because $Q$ does not have submodule of dimension less than 2, so depth($Q$)$\geq$ 1. Therefore we have depth($\Tor(\Gamma)) \geq 2$ by the depth lemma, but dim($\Tor(\Gamma)) \leq 2$, so it is Cohen-Macaulay of dimension 2.
\end{proof}
The measurement of $\Gamma/F$ violating (a) is given by the module $H^1_{\mathfrak{m}}(Q)$. It suffices to give a description of this module.
\begin{lemma}Let $\Gamma$ be a module of depth $2$, then $\Gamma^*$ and $\Gamma^{**}$ are modules of depth at least $2$.
\end{lemma}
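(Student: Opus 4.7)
The plan is to prove the stronger statement that $\depth(M^*) \geq 2$ for \emph{any} finitely generated graded $R$-module $M$, with no hypothesis on the depth of $M$ needed. Applying this first to $M = \Gamma$ gives $\depth(\Gamma^*) \geq 2$, and applying it again to $M = \Gamma^*$ (which is finitely generated, being a submodule of a free module over a Noetherian ring) gives $\depth(\Gamma^{**}) \geq 2$. Thus the assumption $\depth \Gamma \geq 2$ actually plays no role; what is really happening is that every dual of a finitely generated module is automatically a second syzygy.

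To establish the general claim, I would start from a finite free presentation $F_1 \xrightarrow{\phi} F_0 \to M \to 0$ and dualize to obtain the four-term exact sequence
$$0 \to M^* \to F_0^* \xrightarrow{\phi^*} F_1^* \to \Tr(M) \to 0.$$
Setting $K := \textup{image}(\phi^*) \subseteq F_1^*$ splits this into a short exact sequence $0 \to M^* \to F_0^* \to K \to 0$, with both $M^*$ and $K$ torsion-free since they sit inside free modules over the domain $R$. Because $\mathfrak{m}$ contains the nonzerodivisor $x_1$, any element of $M^*$ or $K$ annihilated by a power of $\mathfrak{m}$ must vanish, so $H^0_{\mathfrak{m}}(M^*) = H^0_{\mathfrak{m}}(K) = 0$.

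The associated long exact sequence in local cohomology then gives
$$H^0_{\mathfrak{m}}(K) \to H^1_{\mathfrak{m}}(M^*) \to H^1_{\mathfrak{m}}(F_0^*),$$
in which the left term is zero by the previous paragraph and the right term is zero because $F_0^*$ is free and $\depth R = 3$. Hence $H^1_{\mathfrak{m}}(M^*) = 0$ as well, so $\depth(M^*) \geq 2$. There is no real obstacle in this argument: the only idea needed is the observation that the dual of a finitely generated module is always the first term in the four-term Auslander--Bridger sequence, which embeds it into a free module with a torsion-free cokernel, and two applications of this fact yield both halves of the statement.
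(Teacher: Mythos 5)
Your proof is correct, and it rests on the same core observation as the paper's: that $M^*$ is a second syzygy, obtained from the four-term exact sequence $0 \to M^* \to F_0^* \to F_1^* \to \Tr(M) \to 0$. The differences are worth noting, though. The paper first splits off the free summand of $\Gamma$, then uses that $\Gamma^*$ is a second syzygy to bound its projective dimension by $1$ and invokes Auslander--Buchsbaum to conclude $\depth \geq 2$; you instead dualize directly, split the four-term sequence at $K = \textup{image}(\phi^*)$, observe that $M^*$ and $K$ are torsion-free so $H^0_{\mathfrak{m}}$ vanishes, and then read off $H^1_{\mathfrak{m}}(M^*) = 0$ from the long exact sequence and the freeness of $F_0^*$. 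Your route avoids the free-summand case split and avoids passing through projective dimension, and it also makes explicit the (true but unremarked) fact that the hypothesis $\depth \Gamma \geq 2$ is never used --- $\depth(M^*) \geq 2$ holds for every finitely generated graded $M$ over a polynomial ring of dimension at least $2$. Both arguments are valid; yours is marginally more elementary and more transparent about the role of the hypothesis.
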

\begin{proof}If $\Gamma$ has a free summand $F$, then $F^*$ is a free summand of $\Gamma^*$, so it suffices to prove in the case where $\Gamma$ has no free summand. In this case $\Gamma^*$ is the second syzygy of $\Tr(\Gamma)$. Hence projdim$(\Gamma^*) \leq 3-2=1$. So depth$(\Gamma^*) \geq 2$. Replace $\Gamma$ by $\Gamma^*$, we get depth$(\Gamma^{**}) \geq 2$.
\end{proof}
\begin{proposition}\label{6.7}There is an exact sequence $0 \to \Tor(\Gamma) \to \Gamma \to \Gamma^{**} \to L \to 0$. Then $L=\Ext^2_R(\Tr(\Gamma),R)$. Let $Q=\Gamma/\Tor(\Gamma)$, then $\depth(Q) \geq 1$ and $H^1_{\mathfrak{m}}(Q)=H^0_{\mathfrak{m}}(L)$ is of finite length.
\end{proposition}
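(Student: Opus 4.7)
The plan is to apply the Auslander--Bridger fundamental four-term exact sequence
\[
0 \to \Ext^1_R(\Tr \Gamma, R) \to \Gamma \to \Gamma^{**} \to \Ext^2_R(\Tr \Gamma, R) \to 0,
\]
which is valid for any finitely generated module, and then match each term to what the statement requires. Setting $N = \Tr \Gamma$, this immediately identifies the cokernel of the natural map $\Gamma \to \Gamma^{**}$ as $L = \Ext^2_R(N,R)$, which handles the identification of $L$ asserted in the proposition.

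The key step is to identify the kernel $\Ext^1_R(N,R)$ with $\Tor(\Gamma)$. One direction is routine: since $R$ is regular of dimension $n=3$, $\Ext^i_R(-,R)$ vanishes after localizing at any prime of height less than $i$, so $\dim \Ext^1_R(N,R) \leq 2$; being a submodule of $\Gamma$ of dimension at most $2$, it sits inside $\Tor(\Gamma)$ by Lemma \ref{6.5}. For the reverse inclusion I would show that $\Gamma^{**}$ is torsion-free: reducing, as in the preceding lemma, to the case where $\Gamma$ has no free summand, the presentation of $\Gamma$ realizes $\Gamma^*$ as the second syzygy of $N$, and dualizing a free cover of the next syzygy of $\Gamma^*$ exhibits $\Gamma^{**}$ as a submodule of a free module. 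Torsion-freeness then forces $\Tor(\Gamma) \subseteq \ker(\Gamma \to \Gamma^{**})$, which yields the four-term sequence in the statement.

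Splitting this into $0 \to \Tor(\Gamma) \to \Gamma \to Q \to 0$ and $0 \to Q \to \Gamma^{**} \to L \to 0$, the depth lemma applied to the first, together with $\depth \Tor(\Gamma) = 2$ from Lemma \ref{6.5} and $\depth \Gamma \geq 2$, yields $\depth Q \geq 1$. For the isomorphism of local cohomology I would run the long exact sequence of the second short exact sequence; since $\depth \Gamma^{**} \geq 2$ by the preceding lemma, both $H^0_{\mathfrak{m}}(\Gamma^{**})$ and $H^1_{\mathfrak{m}}(\Gamma^{**})$ vanish, producing $H^0_{\mathfrak{m}}(L) \cong H^1_{\mathfrak{m}}(Q)$. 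Finite length of $H^0_{\mathfrak{m}}(L)$ is automatic from its support at $\mathfrak{m}$; alternatively it follows from Proposition \ref{5.1} applied to $Q$, which has depth at least $1$ and no submodule of dimension at most $2$ by construction.

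The delicate point is torsion-freeness of $\Gamma^{**}$: depth at least $2$ is not enough on its own (for instance $R/\mathfrak{p}$ for a height-one prime has depth $2$ yet is torsion as an $R$-module), so one really does need the syzygy interpretation to place $\Gamma^{**}$ inside a free module. Once this is secured, everything else is a routine combination of the depth lemma, the long exact sequence in local cohomology, and results already established earlier in the paper.
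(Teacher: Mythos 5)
Your proof is correct and follows essentially the same route as the paper: both take the Auslander--Bridger four-term sequence, split it at $Q = \Gamma/\Tor\Gamma$ into $0 \to \Tor\Gamma \to \Gamma \to Q \to 0$ and $0 \to Q \to \Gamma^{**} \to L \to 0$, and read $H^1_{\mathfrak{m}}(Q) \cong H^0_{\mathfrak{m}}(L)$ off the long exact sequence of the second using $\depth\Gamma^{**} \geq 2$, with $\depth Q \geq 1$ from the depth lemma. You supply detail the paper treats as ``well-known'' (the paper states the four-term sequence with $\Tor(\Gamma)$ on the left and omits the exactness proof), namely the explicit identification $\ker(\Gamma \to \Gamma^{**}) = \Ext^1_R(\Tr\Gamma,R) = \Tor(\Gamma)$ via dimension counting and torsion-freeness of the bidual.
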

\begin{proof}The exact sequence is well-known, so we omit the proof of the exactness. Now let $Q=\Gamma/\Tor(\Gamma)$. The composition map $\Tor(\Gamma) \to \Gamma \to \Gamma^{**}$ is 0, hence we have a map $Q \to \Gamma^{**}$. This map is injective because it is a map between torsion-free modules over $R$ and it stays injective after tensoring with $K$. So there are two short exact sequences $0 \to \Tor(\Gamma) \to \Gamma \to Q \to 0$ and $0 \to Q \to \Gamma^{**} \to L \to 0$. This leads to two long exact sequences:
\begin{center}
\leavevmode
\xymatrix{
0 \ar[r]& H^1_{\mathfrak{m}}(\Tor(\Gamma))=0 \ar[r]& H^1_{\mathfrak{m}}(\Gamma)=0 \ar[r]& H^1_{\mathfrak{m}}(Q) \\
  \ar[r]& H^2_{\mathfrak{m}}(\Tor(\Gamma)) \ar[r]& H^2_{\mathfrak{m}}(\Gamma) \ar[r]& H^2_{\mathfrak{m}}(Q) \\
  \ar[r]& H^3_{\mathfrak{m}}(\Tor(\Gamma)) \ar[r]& H^3_{\mathfrak{m}}(\Gamma) \ar[r]& H^3_{\mathfrak{m}}(Q) \ar[r]&0,\\
}
\end{center}
\begin{center}
\leavevmode
\xymatrix{
0 \ar[r]& H^0_{\mathfrak{m}}(Q)=0 \ar[r]& H^0_{\mathfrak{m}}(\Gamma^{**})=0 \ar[r]& H^0_{\mathfrak{m}}(L) \\
  \ar[r]& H^1_{\mathfrak{m}}(Q) \ar[r]& H^1_{\mathfrak{m}}(\Gamma^{**})=0 \ar[r]& H^1_{\mathfrak{m}}(L) \\
  \ar[r]& H^2_{\mathfrak{m}}(Q) \ar[r]& H^2_{\mathfrak{m}}(\Gamma^{**}) \ar[r]& H^2_{\mathfrak{m}}(L)=0 \\
  \ar[r]& H^3_{\mathfrak{m}}(Q) \ar[r]& H^3_{\mathfrak{m}}(\Gamma^{**}) \ar[r]& H^3_{\mathfrak{m}}(L)=0 \ar[r]&0.\\
}
\end{center}
As in the proof of Lemma \ref{6.5}, depth$(Q) \geq$ 1. Also depth$(\Gamma^{**}) \geq$ 2, so $H^0_{\mathfrak{m}}(\Gamma^{**})=H^1_{\mathfrak{m}}(\Gamma^{**})=0$, hence $H^1_{\mathfrak{m}}(Q) \cong H^0_{\mathfrak{m}}(L)$. Note that $L$ is a finitely generated module over $R$, so $H^0_{\mathfrak{m}}(L)$ is of finite length.
\end{proof}
The following theorem explains that we cannot always get a decomposition of $H^\bullet(M)$ in the first way, and gives the condition when such a decomposition exists. 
\begin{theorem}\label{6.8}
Let $M$ be a module satisfying Assumption \ref{assumptions-depth1,noM1}. Let $\Gamma=\Gamma(M)$ and $Q=\Gamma/\Tor(\Gamma)$. Then 
$$H^{\bullet}(M)=H^{\bullet}(\Tor(M))+H^{\bullet}(M/\Tor(M))-(0,HS(H^1_{\mathfrak{m}}(Q)),HS(H^1_{\mathfrak{m}}(Q)),0).$$ 
In particular, if $H^1_{\mathfrak{m}}(Q)=0$, then 
$$H^{\bullet}(M)=H^{\bullet}(\Tor(M))+H^{\bullet}(M/\Tor(M)).$$
\end{theorem}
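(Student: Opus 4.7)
The plan is to apply the $3\times 3$ diagram from the start of this section with the choice $F = \Tor(\Gamma)$ and track the failure of condition (1) of the general decomposition principle (Proposition \ref{6.1}), which occurs precisely because $Q = \Gamma/F$ may only have depth $1$. First I would verify two identifications: (a) $M \cap F = \Tor(M)$, since an element of $M$ is torsion over $R$ iff it is torsion when viewed in $\Gamma$; and (b) the depths of the side modules are appropriate. Because $M$ has depth $1$ and no dimension-$1$ submodule, all associated primes of $M$ have dimension $\geq 2$, so $\Tor(M)$ has depth $\geq 1$, and $M/\Tor M$ is torsion-free with depth $\geq 1$ by the depth lemma. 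Lemma \ref{6.5} gives that $F = \Tor(\Gamma)$ is Cohen--Macaulay of dimension $2$, so $H^0_{\mathfrak{m}}(F) = H^1_{\mathfrak{m}}(F) = H^3_{\mathfrak{m}}(F) = 0$.

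Next I would use Proposition \ref{5.3} to show $\Gamma(\Tor M) = F$: both $F/\Tor M$ and $\Gamma/(M+F)$ are quotients of $\Gamma/M$ and therefore of finite length, and $F$ has depth $2$. Then I would compute each row of the claimed identity directly from the short exact sequences in the diagram. From $0 \to \Tor M \to F \to F/\Tor M \to 0$ with $F/\Tor M$ of finite length, I read off $H^1_{\mathfrak{m}}(\Tor M) \cong F/\Tor M$, $H^2_{\mathfrak{m}}(\Tor M) \cong H^2_{\mathfrak{m}}(F)$, $H^3_{\mathfrak{m}}(\Tor M) = 0$. From $0 \to M/\Tor M \to Q \to \Gamma/(M+F) \to 0$, using that $Q$ has depth $\geq 1$ and $H^1_{\mathfrak{m}}(Q)$ is of finite length (Proposition \ref{6.7}), I obtain $HS(H^1_{\mathfrak{m}}(M/\Tor M)) = HS(\Gamma/(M+F)) + HS(H^1_{\mathfrak{m}}(Q))$ and $H^i_{\mathfrak{m}}(M/\Tor M) \cong H^i_{\mathfrak{m}}(Q)$ for $i \geq 2$. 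From $0 \to M \to \Gamma \to \Gamma/M \to 0$, I get $H^1_{\mathfrak{m}}(M) \cong \Gamma/M$ and $H^i_{\mathfrak{m}}(M) \cong H^i_{\mathfrak{m}}(\Gamma)$ for $i \geq 2$.

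To finish, I would use two auxiliary identities to glue the pieces. The bottom row $0 \to F/\Tor M \to \Gamma/M \to \Gamma/(M+F) \to 0$, consisting of finite-length modules, gives $HS(\Gamma/M) = HS(F/\Tor M) + HS(\Gamma/(M+F))$. The middle column $0 \to F \to \Gamma \to Q \to 0$, combined with the vanishings $H^1_{\mathfrak{m}}(F) = H^3_{\mathfrak{m}}(F) = 0$, produces the four-term exact sequence $0 \to H^1_{\mathfrak{m}}(Q) \to H^2_{\mathfrak{m}}(F) \to H^2_{\mathfrak{m}}(\Gamma) \to H^2_{\mathfrak{m}}(Q) \to 0$ and the isomorphism $H^3_{\mathfrak{m}}(Q) \cong H^3_{\mathfrak{m}}(\Gamma)$, yielding $HS(H^2_{\mathfrak{m}}(F)) + HS(H^2_{\mathfrak{m}}(Q)) = HS(H^2_{\mathfrak{m}}(\Gamma)) + HS(H^1_{\mathfrak{m}}(Q))$. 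Substituting these into $H^{\bullet}(\Tor M) + H^{\bullet}(M/\Tor M)$ component by component gives $H^{\bullet}(M) + (0, HS(H^1_{\mathfrak{m}}(Q)), HS(H^1_{\mathfrak{m}}(Q)), 0)$, which is the stated identity.

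The main obstacle is conceptual rather than computational: one must recognize that although Proposition \ref{6.1} does not apply directly (condition (1) fails for $Q$), the exact obstruction to decomposition is concentrated in $H^1_{\mathfrak{m}}(Q)$, and this contributes symmetrically to cohomological degrees $1$ and $2$ through the connecting map $H^1_{\mathfrak{m}}(Q) \hookrightarrow H^2_{\mathfrak{m}}(F)$. Once this is pinpointed, the remainder of the proof reduces to bookkeeping with Hilbert series of finite-length modules.
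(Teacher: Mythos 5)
Your proposal is correct and follows essentially the same strategy as the paper: take $F = \Tor(\Gamma)$ in the $3\times 3$ diagram, identify $M \cap F = \Tor(M)$ and $\Gamma/F = Q$, and track how the failure of $Q$ to have depth $2$ injects the error term $H^1_{\mathfrak{m}}(Q)$ into degrees $1$ and $2$ via the connecting map $H^1_{\mathfrak{m}}(Q)\hookrightarrow H^2_{\mathfrak{m}}(F)$. The paper presents this more compactly by observing that the morphism $f:C_1\to C_2$ between the two horizontal long exact sequences has $f_7,\dots,f_{12}$ isomorphisms and $f_6$ surjective (these are exactly your finite-length comparisons packaged as a diagram chase), and reads off the three Hilbert-series equations from there; you instead write out all three short exact sequences column-wise and reconcile them using the bottom row and middle column identities. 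Both routes are sound, and your version makes the individual vanishings and isomorphisms somewhat more explicit; incidentally, the paper's first displayed equation $HS(H^3_{\mathfrak{m}}(M)) = HS(H^3_{\mathfrak{m}}(\Tor M))$ is a typo (it should read $M/\Tor M$ on the right, since $H^3_{\mathfrak{m}}(\Tor M)=0$), and your calculation handles this correctly.
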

\begin{proof}Take $F=\Tor(\Gamma)$ in diagram 2, then $M\cap F=\Tor(M)$, $\Gamma/F=Q$. Let $H(f):H(C_1) \to H(C_2)$ be the corresponding morphism. Then $f_i, 7 \leq i \leq 12$ are isomorphisms, and $f_6$ is surjective. Note that in this case $H^3_{\mathfrak{m}}(F)=H^3_{\mathfrak{m}}(M\cap F)=0$. Now eliminate all the 0's and get a diagram:
\begin{center}
\leavevmode
\xymatrix{
0 \ar[r]& H^1_{\mathfrak{m}}(\Tor(M)) \ar[r]\ar[d]^{f_4}& H^1_{\mathfrak{m}}(M) \ar[r]\ar[d]^{f_5}& H^1_{\mathfrak{m}}(M/\Tor(M)) \ar[d]^{f_6}\\
0 \ar[r]& 0 \ar[r]& 0 \ar[r]& H^1_{\mathfrak{m}}(Q) \\
}
\end{center}
\begin{center}
\leavevmode
\xymatrix{
\ar[r]&H^2_{\mathfrak{m}}(\Tor(M)) \ar[r]\ar[d]^{f_7}& H^2_{\mathfrak{m}}(M)\ar[r]\ar[d]^{f_8}&H^2_{\mathfrak{m}}(M/\Tor(M))\ar[d]^{f_9}\ar[r]& 0\\
\ar[r]&H^2_{\mathfrak{m}}(\Tor(\Gamma)) \ar[r]& H^2_{\mathfrak{m}}(\Gamma)\ar[r]&H^2_{\mathfrak{m}}(Q)\ar[r]& 0\\
}
\end{center}
plus isomorphisms $H^3_{\mathfrak{m}}(M)=H^3_{\mathfrak{m}}(\Gamma)=H^3_{\mathfrak{m}}(M/\Tor(M))=H^3_{\mathfrak{m}}(Q)$. So the kernel of the map $H^2_{\mathfrak{m}}(\Tor(M)) \to H^2_{\mathfrak{m}}(M)$ is isomorphic to $H^1_{\mathfrak{m}}(Q)$. This leads to 3 equations:
$$HS(H^3_{\mathfrak{m}}(M))=HS(H^3_{\mathfrak{m}}(\Tor(M))),$$
$$HS(H^2_{\mathfrak{m}}(M))=HS(H^2_{\mathfrak{m}}(M/\Tor(M)))+HS(H^2_{\mathfrak{m}}(\Tor(M)))-HS(H^1_{\mathfrak{m}}(Q)),$$ $$HS(H^1_{\mathfrak{m}}(M))=HS(H^1_{\mathfrak{m}}(M/\Tor(M)))+HS(H^1_{\mathfrak{m}}(\Tor(M)))-HS(H^1_{\mathfrak{m}}(Q)).$$
Equivalently, we have
$$H^{\bullet}(M) = H^{\bullet}(\Tor(M))+H^{\bullet}(M/\Tor(M))-e,$$
where $e=(0,HS(H^1_{\mathfrak{m}}(Q)),HS(H^1_{\mathfrak{m}}(Q)),0).$
\end{proof}
The above theorem shows that vanishing of the term $H^1_{\mathfrak{m}}(Q)$ implies that $H^{\bullet}(M)$ decomposes as a sum of two local cohomology tables $H^{\bullet}(\Tor(M))+H^{\bullet}(M/\Tor(M))$, where $Q=\Gamma/\Tor(\Gamma)$. Here $\Tor(M)$ is a submodule of $M$ of dimension 2. In general $H^1_{\mathfrak{m}}(Q)$ does not vanish, so we do not always get a decomposition. Let $L=\textup{Ext}^2_R(\Tr(\Gamma),R)$. By Proposition \ref{6.7} we see $H^1_{\mathfrak{m}}(Q) \cong H^0_{\mathfrak{m}}(L)$, and the next two propositions show when it vanishes and how to calculate it in terms of $\Gamma$.
\begin{proposition}Let $M$ be a module satisfying Assumption \ref{assumptions-depth1,noM1}, $\Gamma=\Gamma(M)$, $Q=\Gamma/\Tor(\Gamma)$ and $L=\textup{Ext}^2_R(\Tr(\Gamma),R)$. Then $H^1_{\mathfrak{m}}(Q)=0$ if and only $L = 0$ or $L$ is Cohen-Macaulay of dimension $1$.
\end{proposition}
\begin{proof}For a finitely generated module $\Gamma$, dim$L \leq 3-2 = 1$. So if $L \neq 0$, then $H^0_{\mathfrak{m}}(L)=0$ if and only if depth$L \geq$ 1, if and only if $L$ is a Cohen-Macaulay module of dimension 1.
\end{proof}
\begin{proposition}\label{6.10}
Let $\Gamma$ be a finitely generated module over $R$ without a free summand. Let $N=\Tr(\Gamma)$, $N_1$ be the maximal submodule of dimension at most $1$, $Q'=N/N_1$, and $L = \textup{Ext}^2_R(N,R)$. Let $\Gamma' = \Gamma(Q')$. Then $H^0_{\mathfrak{m}}(L)$ = \textup{Hom}$_R(\Gamma'/Q',E)$, where $E$ is the injective hull of $k$.
\end{proposition}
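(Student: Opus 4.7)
The plan is to chain together three identifications: connect $\Gamma'/Q'$ to $H^1_{\mathfrak{m}}(Q')$ via the $\Gamma$-functor, connect $H^1_{\mathfrak{m}}(Q')$ to $\Ext^2_R(Q',R)$ via local duality, and then identify $\Ext^2_R(Q',R)$ inside $L = \Ext^2_R(N,R)$ using the short exact sequence $0 \to N_1 \to N \to Q' \to 0$.

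First, I would check that $Q' = N/N_1$ satisfies the hypotheses of the $\Gamma$-functor machinery from Section 5. Since $N_1$ is the largest submodule of $N$ of dimension at most $1$, Corollary 3.2 gives that $Q'$ has no submodule of dimension $\leq 1$; in particular $H^0_{\mathfrak{m}}(Q')=0$, so $\depth(Q') \geq 1$, and $Q'$ has no dimension $1$ submodule. If $\depth(Q') \geq 2$ then $\Gamma' = Q'$ and $H^1_{\mathfrak{m}}(Q') = 0$, making both sides vanish; otherwise $\depth(Q')=1$, so by Corollary \ref{5.5}, $\Gamma'=\Gamma(Q')$ is finitely generated, and by Proposition \ref{5.1}, $H^1_{\mathfrak{m}}(Q')$ has finite length. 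The exact sequence $0 \to Q' \to \Gamma' \to H^1_{\mathfrak{m}}(Q') \to 0$ then gives $\Gamma'/Q' \cong H^1_{\mathfrak{m}}(Q')$. Applying graded local duality and Matlis dualizing the Artinian module $H^1_{\mathfrak{m}}(Q')$ yields $\Hom_R(\Gamma'/Q', E) \cong \Ext^2_R(Q', R(-n))$, which as an $R$-module is $\Ext^2_R(Q',R)$ up to shift.

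Next I would analyze $L$ via the long exact sequence of $\Ext^\bullet_R(-, R)$ applied to $0 \to N_1 \to N \to Q' \to 0$. Since $\dim(N_1) \leq 1 = n-2$, we have $\Ext^i_R(N_1,R)=0$ for $i < n - \dim N_1 = 2$, so $\Ext^1_R(N_1,R)=0$ and we obtain an injection
\[
0 \to \Ext^2_R(Q',R) \to L \to \Ext^2_R(N_1,R).
\]
The submodule $\Ext^2_R(Q',R)$ has finite length by the Matlis dual computation above, so it is contained in $H^0_{\mathfrak{m}}(L)$.

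For the opposite inclusion, the key step — and I expect this to be the main obstacle — is to show that $\Ext^2_R(N_1, R)$ has positive depth (or is zero), so that the cokernel $L/\Ext^2_R(Q',R)$ contributes nothing to $H^0_{\mathfrak{m}}(L)$. To see this, split $N_1$ by $0 \to H^0_{\mathfrak{m}}(N_1) \to N_1 \to N_1^1 \to 0$ where $N_1^1$ is either $0$ or Cohen--Macaulay of dimension $1$. Since $H^0_{\mathfrak{m}}(N_1)$ has finite length, $\Ext^i_R(H^0_{\mathfrak{m}}(N_1), R)=0$ for $i<n=3$, so the resulting long exact sequence gives $\Ext^2_R(N_1,R) \cong \Ext^2_R(N_1^1,R)$, which is either $0$ or Cohen--Macaulay of dimension $1$, hence has $H^0_{\mathfrak{m}}=0$. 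Therefore $H^0_{\mathfrak{m}}(L) \subseteq \Ext^2_R(Q',R)$, and equality holds. Combining: $H^0_{\mathfrak{m}}(L) = \Ext^2_R(Q',R) \cong \Hom_R(H^1_{\mathfrak{m}}(Q'), E) = \Hom_R(\Gamma'/Q', E)$, as desired.
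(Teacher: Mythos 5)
Your proof is correct and follows essentially the same path as the paper's: both identify $\Hom_R(\Gamma'/Q',E)$ with $\Ext^2_R(Q',R)$ via local duality, embed $\Ext^2_R(Q',R)$ into $L$ using the sequence $0\to N_1\to N\to Q'\to 0$, and conclude equality with $H^0_{\mathfrak m}(L)$ by noting $\Ext^2_R(N_1,R)$ has positive depth. The only cosmetic differences are that the paper reduces to $\depth N\geq 1$ at the outset (so $N_1$ is Cohen--Macaulay of dimension $1$) and derives the $\Ext$ sequence from the local-cohomology long exact sequence plus duality, whereas you work directly with the $\Ext$ sequence and absorb the finite-length part of $N_1$ inside the argument.
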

\begin{proof}We may assume depth$N \geq$ 1 because Ext$^2_R(N,R) = $ Ext$^2_R(N/H^0_{\mathfrak{m}}(N),R)$. In this case $N_1$ is Cohen-Macaulay of dimension 1. The short exact sequence $0 \to N_1 \to N \to Q' \to 0$ induces a long exact sequence:
\begin{center}
\leavevmode
\xymatrix{
0 \ar[r]& H^0_{\mathfrak{m}}(N_1) \ar[r]& H^0_{\mathfrak{m}}(N) \ar[r]& H^0_{\mathfrak{m}}(Q')=0 \\
  \ar[r]& H^1_{\mathfrak{m}}(N_1) \ar[r]& H^1_{\mathfrak{m}}(N) \ar[r]& H^1_{\mathfrak{m}}(Q') \\
  \ar[r]& H^2_{\mathfrak{m}}(N_1)=0 \ar[r]& H^2_{\mathfrak{m}}(N) \ar[r]& H^2_{\mathfrak{m}}(Q'). \\
}
\end{center}
So we have an exact sequence $0 \to H^1_{\mathfrak{m}}(N_1) \to H^1_{\mathfrak{m}}(N) \to H^1_{\mathfrak{m}}(Q') \to 0$. By local duality, $0 \to $ Ext$^2_R(Q',R) \to L \to $ Ext$^2_R(N_1,R) \to 0$ is exact. Now dim$L \leq 1$, Ext$^2_R(N_1,R)$ is Cohen-Macaulay of dimension 1, and $H^1_{\mathfrak{m}}(Q')$ is of finite length, hence Ext$^2_R(Q',R)$ is of finite length. This means that Ext$^2_R(Q',R) = H^0_{\mathfrak{m}}(L)$. Finally by local duality Ext$^2_R(Q',R)=$ Hom$_R(H^1_{\mathfrak{m}}(Q'),E)$ and $H^1_{\mathfrak{m}}(Q') = \Gamma'/Q'$, so we are done.
\end{proof}
Theorem \ref{6.8} describes a way to decompose a local cohomology table of a module $M$ of dimension 3 using a submodule $\Tor(M)$ of dimension 2. There is another way to decompose $H^{\bullet}(M)$, which is induced by a free submodule $F \subset \Gamma$. Note that if $\Gamma$ does not have a free submodule, then $\dim(\Gamma) \leq 2$, $\dim(M) \leq 2$, and the decomposition of the local cohomology table of $M$ is known by \cite{CAVIGLIA2021106635}. So we may always assume that $\Gamma$ has a free submodule. In this case (a) is automatically satisfied because the depth lemma implies that if depth$\Gamma \leq 2$ and depth$F \leq 3$, then depth$\Gamma/F \geq 2$. Hence $F$ may only violate (c) in the general decomposition principle. We observe that the only possibly nonzero connection map is $H^2_{\mathfrak{m}}(\Gamma/F) \to H^3_{\mathfrak{m}}(F)$, and $H^2_{\mathfrak{m}}(\Gamma/F) \to H^3_{\mathfrak{m}}(F)$ is 0 if and only if $H^3_{\mathfrak{m}}(F) \to H^3_{\mathfrak{m}}(\Gamma)$ is injective, if and only if $\Gamma^* \to F^*$ is surjective. But $F^*$ is free, so this means $F^*$ is a free summand of $\Gamma^*$.
\begin{proposition}\label{6.12}Let $\Gamma$ be a module of dimension $3$ and depth at least $2$. Suppose $\Gamma^*$ has a free summand $G$ and $\Ext^2_R(\Tr(\Gamma),R) = 0$. Then $F = G^* \subset \Gamma$ is a free summand, and the pair $(F, \Gamma)$ satisfies condition ($3$) of Proposition \ref{6.1}.
\end{proposition}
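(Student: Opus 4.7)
The plan is to produce $F$ as a graded free summand of $\Gamma$; once this is in hand, condition (3) of Proposition \ref{6.1} will follow immediately because a split short exact sequence induces a split long exact sequence in local cohomology, killing every connecting map.

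The first step is to apply Proposition \ref{6.7}. With $L = \Ext^2_R(\Tr(\Gamma), R) = 0$, the four-term exact sequence collapses to a short exact sequence $0 \to \Tor(\Gamma) \to \Gamma \to \Gamma^{**} \to 0$, so the canonical evaluation map $\Gamma \twoheadrightarrow \Gamma^{**}$ is surjective. The next step is to exploit the graded free summand $G = Re' \subset \Gamma^*$. Dualizing the split inclusion $G \hookrightarrow \Gamma^*$ yields a split surjection $\Gamma^{**} \twoheadrightarrow G^* = F$, exhibiting $F$ as a rank-$1$ graded free summand of $\Gamma^{**}$. Composing the two surjections gives a graded surjection $\phi : \Gamma \twoheadrightarrow F$, and since $F$ is graded free (hence projective), $\phi$ admits a graded section $s : F \to \Gamma$ with $\phi \circ s = \mathrm{id}_F$. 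This produces a decomposition $\Gamma = s(F) \oplus \ker(\phi)$, so $s(F) \cong F$ is a graded free summand of $\Gamma$; we identify $F$ with $s(F)$ as a submodule of $\Gamma$.

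With $F \subset \Gamma$ a free summand, the short exact sequence $0 \to F \to \Gamma \to \Gamma/F \to 0$ splits. Hence the induced long exact sequence in local cohomology splits into split short exact sequences, and every connecting homomorphism $H^i_{\mathfrak{m}}(\Gamma/F) \to H^{i+1}_{\mathfrak{m}}(F)$ vanishes. In particular, $H^2_{\mathfrak{m}}(\Gamma/F) \to H^3_{\mathfrak{m}}(F)$ is zero, which is exactly condition (3) of Proposition \ref{6.1}.

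I do not foresee a genuine obstacle in this argument; the crucial combination of hypotheses appears in the second step, where the vanishing $L = 0$ lets us lift from $\Gamma^{**}$ back to $\Gamma$ while the summand of $\Gamma^*$ supplies the projection to a free module. The main thing to be careful about is keeping all constructions in the graded category, which is automatic since dualization sends graded summands to graded summands and a graded surjection onto a graded free module admits a graded section.
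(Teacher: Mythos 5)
Your proof is correct and follows the same route as the paper: use $L = 0$ together with Proposition \ref{6.7} to obtain the surjection $\Gamma \twoheadrightarrow \Gamma^{**}$, dualize the split inclusion $G \hookrightarrow \Gamma^*$ to get $\Gamma^{**} \twoheadrightarrow G^* = F$, compose to conclude that $F$ is a free (hence split) summand of $\Gamma$, and read off condition (3). The only cosmetic difference is at the last step: the paper verifies (3) by observing that the induced map $\Gamma^* \to F^*$ is surjective and invoking the equivalence stated just before Lemma 6.11, whereas you note directly that a split short exact sequence kills all connecting maps in local cohomology; these are two equivalent ways of saying the same thing, and your version is arguably slightly cleaner. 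You are also a bit more careful than the paper about the fact that realizing $F$ as a submodule of $\Gamma$ requires choosing a section of the surjection $\Gamma \twoheadrightarrow F$, which is a point the paper glosses over.
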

\begin{proof}Since $G$ is a free summand of $\Gamma^*$, $F=G^*$ is a free summand of $\Gamma^{**}$, so $\Gamma^{**}$ surjects onto $F$. Now since $\Ext^2_R(\Tr(\Gamma),R)=0$, $\Gamma$ surjects onto $\Gamma^{**}$, so $\Gamma$ surjects onto $F$, but $F$ is projective, hence $F$ is a free summand. The map $\Gamma^* \to F^*$ induced by the inclusion $F \to \Gamma$ is just the projection onto the summand $G$ which is surjective. This means that the pair $(F, \Gamma)$ satisfies (c) of Proposition \ref{6.1}.
\end{proof}
\begin{proposition}\label{6.13}Let $\Gamma$ be a module of dimension $3$ and depth at least $2$. Then $\Gamma^*$ does not have a free summand if and only if $\Gamma^*=\Tr(L')$ for a module $L'$ of finite length.
\end{proposition}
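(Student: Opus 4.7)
The plan is to dispatch the two directions separately. The $(\Leftarrow)$ direction follows quickly from Proposition \ref{4.4}(2): if $\Gamma^* = \Tr(L')$ with $L'$ of finite length, then $L' \neq 0$ (since $\dim \Gamma = 3 = \dim R$ forces $\Gamma$ to have positive generic rank, so $\Gamma^* \neq 0$), and as $\dim L' = 0 \leq n-1$, $\Tr(L') = \Gamma^*$ has no free summand.

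For the $(\Rightarrow)$ direction, assume $\Gamma^*$ has no free summand. The lemma preceding Proposition \ref{6.7} gives $\depth \Gamma^* \geq 2$, so by Auslander--Buchsbaum $\textup{projdim}_R \Gamma^* \leq 1$. The identity $M \cong \Tr(\Tr(M)) \oplus F$ (with $F$ free and $\Tr(\Tr(M))$ without free summand) applied to $\Gamma^*$ yields $\Gamma^* \cong \Tr(\Tr(\Gamma^*))$. Setting $L' := \Tr(\Gamma^*)$, the projective dimension bound gives $L' = \Ext^1_R(\Gamma^*, R)$. The task therefore reduces to showing $L'$ has finite length, and since $\Ext^1$ commutes with flat base change for finitely generated modules, this amounts to showing that $\Gamma^*_{\mathfrak{p}}$ is $R_{\mathfrak{p}}$-free for every prime $\mathfrak{p}$ with $\textup{ht}(\mathfrak{p}) \leq 2$.

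The key structural input is that $\Gamma^*$ is reflexive: dualizing a minimal presentation $F_1 \to F_0 \to \Gamma \to 0$ gives the exact sequence $0 \to \Gamma^* \to F_0^* \to F_1^*$, and the standard characterization (a finitely generated module over a Noetherian ring is reflexive iff it is the kernel of a map between finite free modules) makes $\Gamma^*$ reflexive. Since $\Hom_R(-, R)$ commutes with localization for finitely generated modules, $\Gamma^*_{\mathfrak{p}}$ is reflexive over $R_{\mathfrak{p}}$ for every $\mathfrak{p}$. When $\textup{ht}(\mathfrak{p}) \leq 1$, $R_{\mathfrak{p}}$ is a field or a DVR and any torsion-free finitely generated module is free. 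When $\textup{ht}(\mathfrak{p}) = 2$, the reflexive module $\Gamma^*_{\mathfrak{p}}$ over the two-dimensional regular local ring $R_{\mathfrak{p}}$ embeds via the same characterization into an exact sequence $0 \to \Gamma^*_{\mathfrak{p}} \to G_0 \to G_1$ with $G_0, G_1$ finite free, so the depth lemma forces $\depth \Gamma^*_{\mathfrak{p}} \geq 2 = \dim R_{\mathfrak{p}}$, making $\Gamma^*_{\mathfrak{p}}$ maximal Cohen--Macaulay, hence free by Auslander--Buchsbaum. Therefore $L'$ is supported only at $\mathfrak{m}$ and has finite length. The main technical point is this final chain of implications at height $2$; the rest is routine bookkeeping with the Auslander transpose formalism from Section 4 and the depth computations at the start of Section 6.
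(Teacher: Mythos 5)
Your argument is correct but takes a different route from the paper for the key finite-length claim. The paper's proof is essentially a two-line computation: from the dualized presentation $0 \to \Gamma^* \to F_0^* \to F_1^* \to \Tr(\Gamma) \to 0$ one sees that $\Gamma^*$ is the second syzygy of $N = \Tr(\Gamma)$, hence $L' = \Tr(\Gamma^*) = \Ext^1_R(\Gamma^*,R) \cong \Ext^3_R(N,R)$, and $\Ext^3_R(N,R)$ has dimension at most $n-3 = 0$, so it has finite length. You instead establish that $\Gamma^*$ is reflexive and localize at every prime of height $\le 2$, showing $\Gamma^*_{\mathfrak{p}}$ is free there (free over DVRs by torsion-freeness, MCM over 2-dimensional regular local rings by the syzygy/depth argument) so that $L'$ is supported only at $\mathfrak{m}$. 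Both work; the paper's route is shorter because it exploits the second-syzygy identification to push the $\Ext$ degree up to $3$ rather than arguing support-theoretically.

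One small imprecision: your parenthetical asserts that over an \emph{arbitrary} Noetherian ring, being the kernel of a map between finite free modules characterizes reflexivity. That equivalence is not true in that generality; it holds over Gorenstein rings (via Auslander--Bridger: second syzygies are exactly the $2$-torsionless modules, which over Gorenstein rings coincide with reflexive modules). Since $R$ is regular, hence Gorenstein, the step is valid here, but the statement should be scoped accordingly. Your $(\Leftarrow)$ direction, reading off ``no free summand'' from Proposition~\ref{4.4}(2) once you note $\Gamma^* \neq 0$, matches the paper's implicit use of the same fact.
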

\begin{proof}By the previous proposition projdim$(\Gamma^*) \leq 1$. So $\Gamma^*$ does not have a free summand if and only if $\Gamma^*=\Tr(L')$ for $L'=\Tr(\Gamma^*)$. But $\Gamma^*$ is the second syzygy of $N=\Tr(\Gamma)$, hence $L'=\textup{Ext}^3_R(N,R)$, and this module has finite length.
\end{proof}
By Proposition \ref{6.12} and \ref{6.13} we immediately have:
\begin{theorem}\label{6.14}
Let $M$ be an $R$-module satisfying Assumption \ref{assumptions-depth1,noM1}, $\Gamma=\Gamma(M)$. Assume $\Ext^2_R(\Tr(\Gamma),R)=0$, and $\Gamma^* \neq \Tr(L')$ for any module $L'$ of finite length. Then there exists a free submodule $F \subset \Gamma$ such that $H^{\bullet}(M)=H^{\bullet}(M\cap F)+H^{\bullet}(M/M\cap F)$.
\end{theorem}
Since $l(F/M\cap F)<\infty$, we know dim$(M\cap F)$ = dim$F$ = 3, so $H^{\bullet}(M)$ is the sum of $H^{\bullet}(M\cap F)$ and $H^{\bullet}(M/M\cap F)$ where $M\cap F$ is a submodule of $M$ of dimension 3.

In conclusion, for a finitely generated graded module $M$ of dimension 3, $H^{\bullet}(M)$ is decomposable in two cases; in Theorem \ref{6.8} a submodule of $M$ of dimension 2 induces a decomposition and in Theorem \ref{6.14} a submodule of $M$ of dimension 3 induces a decomposition.

\section*{Acknowledgements}
The author would like to thank Giulio Caviglia for introducing this problem and providing references, and the referee for helpful suggestions. The author is supported by the Ross-Lynn Research Scholar Fund of Purdue University.

\bibliographystyle{plain}
\bibliography{refsDLCT2}

\end{document}